\documentclass[12pt]{amsart}
\usepackage[utf8]{inputenc}
\usepackage{amsmath, latexsym, amsfonts, amssymb, amsthm, amscd,amsaddr}
\usepackage[left=3cm, right=2.5cm, top=2.5cm, bottom=2.5cm]{geometry}
\usepackage{color}
\usepackage{hyperref}
\usepackage{stmaryrd}
\usepackage{enumerate}
\usepackage{float}
\usepackage{subcaption}
\usepackage{graphicx}
\usepackage{mfirstuc}
\usepackage[section]{placeins}
\usepackage{adjustbox}


\newtheorem{proposition}{Proposition}
\newtheorem{lemma}{Lemma}
\newtheorem{rem}{Remark}

\newtheorem{theorem}{Theorem}

\hypersetup{
    colorlinks=true,
    citecolor=blue,
    linkcolor=blue,
}

\usepackage[
backend=biber,
style=numeric,
citestyle=numeric,
maxbibnames=99,
maxcitenames=2
]{biblatex}       
\addbibresource{bibliography.bib}
\AtEveryBibitem{\clearfield{url}}

\usepackage{comment}


\newcommand{\R}{\mathbb{R}}


\newcommand{\E}{\mathbb{E}}

\newcommand{\Var}{\text{Var}} 
\newcommand{\totvar}{\emph{V}} 

\def\Bea{\begin{eqnarray*}}
\def\Eea{\end{eqnarray*}} 
\def\bea{\begin{eqnarray}}
\def\eea{\end{eqnarray}} 


\captionsetup{font=small,labelfont={bf,sf}}
\captionsetup[sub]{font=footnotesize,labelfont={bf,sf}, justification=centering}

\setlength{\marginparwidth}{2cm}

\usepackage{tikz}

\begin{document}
 \title[]{Estimation of the lifetime distribution from fluctuations in Bellman-Harris processes}

\author{J\MakeLowercase{ules} {O\MakeLowercase{layé}}$^{1,*}$, H\MakeLowercase{ala} {B\MakeLowercase{ouzidi}}$^{2}$, A\MakeLowercase{ndrey} {A\MakeLowercase{ristov}}$^{3,4}$, A\MakeLowercase{ntoine} {B\MakeLowercase{arizien}$^{3,4}$}, \\ S\MakeLowercase{alomé} {G\MakeLowercase{utiérrez} R\MakeLowercase{amos}}$^{3,4}$, C\MakeLowercase{harles} {B\MakeLowercase{aroud}}$^{3,4}$, V\MakeLowercase{incent} {B\MakeLowercase{ansaye}}$^{1}$}

\let\thefootnote\relax\footnotetext{$^{1}$ CMAP, INRIA, École polytechnique, Institut Polytechnique de Paris, 91120 Palaiseau, France}
\let\thefootnote\relax\footnotetext{$^{2}$ ENSTA Paris, Institut Polytechnique de Paris, 91120 Palaiseau, France}
\let\thefootnote\relax\footnotetext{$^{3}$ Institut Pasteur, Université Paris Cité, Physical microfluidics and Bioengineering, F-75015 Paris, France}
\let\thefootnote\relax\footnotetext{$^{4}$ LadHyX, CNRS, Ecole Polytechnique, Institut Polytechnique de Paris, 91120 Palaiseau, France}
\let\thefootnote\relax\footnotetext{$^{*}$ Corresponding author: \href{mailto:jules.olaye@polytechnique.edu}{jules.olaye@polytechnique.edu}}
 

\begin{abstract}  
The growth of populations without interactions can often be modeled by branching processes where each individual evolves independently and with the same law. In Bellman-Harris processes, each individual lives a random time and is then replaced by a random number of offspring. We are interested in the estimation of the parameters of this model. Our motivation comes from the estimation of cell division time and we focus on Gamma distribution for lifetime and binary reproduction. The mean of the lifetime is closely related to the growth rate of the population. Going farther and describing lifetime variability from fixed time observations is a challenging task, due to the complexity of the fluctuations of non-Markovian branching processes. Using fine results on these fluctuations, we describe two time-asymptotic regimes and explain how to discriminate between them and estimate the parameters. Then, we consider simulations and biological data to validate and discuss our method. It allows to determine single-cell parameters from time-resolved measurements of populations without the need to track each individual or to know the details of the initial condition. The results can be extended to more general branching processes.
\end{abstract}
\maketitle
\noindent\textit{\small{Keywords: Bellman-Harris processes, asymptotic fluctuations, estimation, inverse problem,  cell division}}

\section{Introduction}

Branching processes are widely used for modeling populations where individuals may reproduce or die, and evolve independently.  The simplest Markovian branching process is the Galton-Watson process. In continuous time, each  individual  lives during an exponential time and is then replaced by a random number of offspring.  This model and its extensions have been used and applied 
in population dynamics and evolution \cite{durrett_branching_2015,haccou_branching_2005,kimmel_branching_2015,meleard_stochastic_2015}, epidemiology
\cite{ball_strong_1995, britton_stochastic_2019},  queuing systems like polling \cite{vatutin_polling_2011}, nuclear  physics \cite{cox_multi-species_2019} etc. The exponential distribution of lifetimes corresponds to a memory less property and absence of aging of individuals. 

For many models and issues in life sciences, such a distribution is not  relevant and does not fit with data and observations. For instance, the time for cell division  rather looks like an unimodal distribution, more or less concentrated around its mean, see e.g. \cite{billy_synchronisation_2014,stukalin_age-dependent_2013,taheri-araghi_cell-size_2015} and references therein. The  variability of the lifespan  can be attributed to different sources.  Various models have been considered to describe it,  including a trait driving the division (the time from birth, the size, the increment of size from birth) or taking into account environmental variability, or individual variability, see e.g. ~\cite{barizien_growing_2019, cooper_1991, taheri-araghi_cell-size_2015}.  Similarly, in epidemiology, the time of infection for an individual is not considered exponentially distributed \cite{lloyd_realistic_2001}; rather, it is more accurately described by an unimodal time with a varying transmission rate. Another statistical limitation of exponential law is that the value of the mean characterizes the full distribution and  thus forces the variance of the distribution and the pattern of  the variability.

Various extensions of the Galton-Watson process in continuous time enable  to go beyond the exponential lifetimes and consider an age structure. In the Bellman-Harris process, the individuals live during independent random times that follow a common but general distribution. We are interested in inferring this distribution.  In biology or ecology or epidemiology, many data (experimental data or observations in wild life) consist of measuring the total population size along time, with no access to the values of the lifespan between two counts. 
However, up to our knowledge, the estimation of the parameters of the lifetime distribution from such population-level monitoring has not attracted a lot of attention so far.

Our work aims at proposing an efficient method for estimating the parameters of the distribution of lifetimes from such data set.  Going from the population sizes along a given time sequence to the growth rate of the population is direct by looking at the slope observed from data plotted at the log scale.  But going farther and obtaining  the variance and more generally the quantification of the variability of the lifespan of individuals 
 is much more delicate in the non-exponential case. This is due to  several reasons we are explaining in this work. Roughly for now, 
fluctuations of the population size  around its predicted value can have different and subtle behaviors depending on the time distribution and the time of observations.\\


Our motivation for this work and our first application is the estimation of the  time  for cell division using  microfluidics experiments. Indeed, single cell approaches have emerged as an important new way to address biological questions, with many formats to produce data that show heterogeneity of biological processes~\cite{wang2010robust}. We are particularly interested in experiments based on anchored droplets, where the contents of each microfluidic drop can be followed in time~\cite{amselem2016universal}. These experiments allow to obtain many parallel realizations of the growth of cell population starting from a small number of cells~\cite{barizien_growing_2019}. We want to exploit such data to infer the variability of cell division time, using Bellman-Harris processes as a simple statistical framework to model individual variability without heredity, or with negligible heredity. 

We choose here to focus on inference of the two parameters of Gamma lifetime distribution, with binary division. Indeed, Gamma distribution provides a convenient unimodal two parameters family which allows to cover realistic lifetime distribution. It is also a rare case among non-exponential laws which yields  some  explicit and useful computations. As we will see, fluctuations have a  complex behavior, which is a general phenomenon for branching processes.  It can be fully described here, thanks in particular to recent results on fluctuations of Crump-Mode-Jagers processes \cite{iksanov_asymptotic_2024}. This result can be seen as a starting point for more complex explorations to link the fluctuations at the level of population to the individual variability. In particular, in Section \ref{subsect:extension_other_bellman_harris}, we deal with extension of the results to more general lifetimes and non-binary reproduction events. \\

Let us be more explicit now on the setting and the results. We consider  two positive real numbers $k\geq1, \theta >0$. The process starts from one single individual and each individual lives, independently, during 
a random time distributed as a Gamma distribution $\Gamma(k,\theta)$. The density  $g_{k,\theta}$ of this law, denoted $g$ for short,   
is defined for all $t\in\R_+$ by
\begin{equation}
\label{defdens}
g_{k,\theta}(t)=g(t) = \frac{t^{k-1}e^{-\frac{t}{\theta}}}{\Gamma(k)\theta^k},
\end{equation}
where $\Gamma(y) = \int_0^{\infty} s^{y-1}e^{-s} ds$ for all $y\geq0$ is the Gamma function.
We denote by $(N_t)_{t\geq 0}$ the number of individuals at time $t$ and the observed quantity is
the number of individuals
$(N_{t_i})_{i=1...n}$ for a given sequence of times $(t_i)_{i=1...n}$.
Our objective is to determine the two unknown values $k,\theta$ as precisely as possible. 
It is equivalent  to determine the mean $\mu$ and variance $\sigma^2$ of the lifetime. For a  $\Gamma(k,\theta)$ law, they are   explicitly given by  
$$\mu=k\theta, \qquad \sigma^2=k\theta^2.$$
Equivalently, 
we choose to  infer the pair $(\mu, \sigma/\mu)=(k\theta, 1/\sqrt{k})$ gathering the mean and coefficient of variation. 
Classically, in such setting, a first information comes from the observed Malthusian growth. Indeed, for Bellman-Harris processes under some general conditions \cite[Theorem $17.1$ and $21.1$]{Harris_1963},
\begin{align}\label{equivN}
\E(N_t) \underset{t\rightarrow \infty}{\sim} n_{1} e^{\alpha t}, \qquad  N_t\underset{t\rightarrow \infty}{\sim}\E(N_t) W \underset{t\rightarrow \infty}{\sim}  n_{1} e^{\alpha t}W\, \text{ a.s.},
\end{align}
where $n_1$ is inherited from the initial condition, $\alpha>0$ is the growth rate and  $W$ is a non-negative finite random variable.
Besides, for $\Gamma(k,\theta)$ lifetime, $\alpha$ is explicitly known in function of $k,\theta$. Plotting the number of individuals observed at the log scale provides then the following first  estimation 
$$\frac{\log(N_t)}{t}\underset{t\rightarrow \infty}{\longrightarrow}  \alpha = \frac{1}{\theta}\left(2^{\frac{1}{k}} - 1\right) \qquad \text{a.s.}.$$
With real data of biological growth, a stable exponential growth  may indeed be observed during some suitable time window, namely after the biological lag phase and before the cell number approaches the carrying capacity of the droplet~\cite{barizien_growing_2019}. As such, this time window must avoid the early times for which the biological processes are not yet stationary. It also stops at large times when the independence and branching property fail due to saturation and competition. \\

The challenge is then to extract additional pertinent information from the data and to capture the two parameters  $(k, \theta)$. We assume that we have many observations of the process, i.e. many realizations of the Bellman-Harris process and the values of the population size at different times. One may  thus consider variance \cite{stukalin_age-dependent_2013} of the number of individuals. For Gamma distribution, 
\begin{equation}\label{eq:ratio_var_esp}
\lim_{t\rightarrow \infty} \frac{\text{Var}(N_t)}{\mathbb{E}[N_t]^2} =  q(\sigma/\mu),
\end{equation}
where the function $q$ is explicit, see~\eqref{eq:q_function}. This gives a theoretical way to conclude for estimation, and we refer to Section \ref{subsect:variancegamma_sensitivity} for details. 

However, various limitations exist for such an approach in practical applications. In particular, the microfluidic data involve several sources of variability that are not accounted for in this description~\cite{barizien_growing_2019}. First, the initial distribution of number of cells per droplet is not generally known exactly. Instead, it is usually assumed that cells distribute according to a Poisson process, which leads to a distribution of number of cells initially. Second, as already mentioned,  the initial divisions of the bacteria may happen with a different rate than the steady-state process, due to the cells adapting their biological mechanisms to the new environment. Taken together, these new sources of randomness play a strong role on the evolution of the cell number in the droplets, since they take place at the early stages of the exponential process~\cite{stukalin_age-dependent_2013}. 

Indeed, $\text{Var}(N_t)$ is very sensible to the initial number of individuals and the first steps of the process, i.e. the first lifespans in Bellman-Harris process. We are thus bound to forget a first time period of the trajectories of the Bellman-Harris process. We need  estimators after this time, which are the most sensible as possible to the parameters, and in particular to the variance $\sigma^2$ or the coefficient of variation $\sigma/\mu$. We also want to exploit our data set with successive observations as best as possible. Consequently, we need to use an alternative quantity. Whatever the initial condition of Bellman-Harris, the ratio $N_{t+\delta}/N_{t}$ converges almost surely to $\exp(\delta \alpha)$ as $t$ tends to infinity.  This leads us to consider the asymptotic fluctuations $$R_t^{\delta}=N_{t+\delta} -e^{\delta  \alpha} \, N_t,$$ 
where $\alpha$ is the Malthusian growth and  has been estimated in the first step.
Our issue is now to relate the “observed" distribution of $R_t^{\delta}$ for large $t$ to the parameters $(k,\theta)$
we want to determine. This link is delicate.  One may expect that  the order of magnitude of 
$R_t^{\delta}$ is $\sqrt{N_t}=\mathcal O(\exp(t \alpha/2))$. This regime
 corresponds to the classical Gaussian fluctuations in central limit theorem and the fact that for large time, the age of  individuals  can be seen as independent and picked
 according to the  limiting age distribution.
It indeed happens, but only when $k < k_c$, where $k_c$ is the unique solution in $[2,+\infty)$ of 
\begin{equation}\label{eq:definition_thresold_kc}
2\cos\left(2\pi/k_c\right)  = 2^{-1/k_c} + 1, 
\end{equation}
and is approximately equal to $57.24 \pm 0.01$, see~Figure~\ref{fig:thresold_spectral_gap}. In that case,  we prove the following convergence  in law as $t$ tends to infinity
\begin{equation}
\label{regGauss}
\frac{R_t^{\delta}}{\sqrt{N_t}} \overset{\mathcal{L}}{\underset{t\longrightarrow+\infty}{\Longrightarrow}} \mathcal N(0,\sigma_{\delta}^2),
\end{equation}
where  $\mathcal{N}$ is a Gaussian law, and its variance  $\sigma_{\delta}^2$ is given in \eqref{eq:expression_variance_first}. When $k > k_c$, the order of magnitude of $R_t^{\delta}$
is larger. Indeed, in that case, the speed of convergence of the age profile among the population
is too slow.  This convergence 
is quantified 
by the spectral gap $\alpha-\lambda$, i.e. the gap between the first and second eigenvalue of the mean operator, see Section~\ref{sect:asympt}. Indeed, $k > k_c$ implies that $\lambda>\alpha/2$. Hence, the speed of convergence $\alpha-\lambda$ becomes smaller than the  coefficient $\alpha/2$ quantifying fluctuations due to population sizes.
The leading term in $R_t^{\delta}$ 
comes then from the convergence 
of the age profile to its limiting distribution.
The renormalized process has then asymptotically deterministic oscillations along time. These oscillations are due to the lack of variability in division times, implying too much synchronicity in the division times (see Figure \ref{fig:oscillations_explication}). These oscillations involve the time step $\delta$ and a non-Gaussian, finite and complex random variable $M_{\delta}$, that may be equal to $0$: 
\begin{equation}
\label{Regimeaussilent}
\frac{R_t^{\delta}}{\exp(\lambda t)}-2 \vert M_{\delta} \vert \cos(\tau t +\arg ( M_{\delta})) 
\overset{\mathbb{P}}{\underset{t\longrightarrow+\infty}{\longrightarrow}} 0,  
\end{equation}
where  we use  the classical notation for modulus and argument of complex numbers and  
\begin{equation}
\label{expcst}
\lambda=\frac{2^{\frac{1}{k}}\cos\left(\frac{2\pi}{k}\right) - 1}{\theta}  >\alpha/2, \qquad  \tau=\frac{2^{\frac{1}{k}}\sin\left(\frac{2\pi}{k}\right)}{\theta}.
\end{equation}
This classification in two regimes involving the spectral gap   is known for multitype
processes from the works of Athreya \cite{athreya_limit_1969_1,athreya_limit_1969_2}. In our study, the key ingredient  is the asymptotic behavior of $Y_t^{\delta}=\E(N_{t+\delta}\vert \mathcal F_t)-e^{\delta  \alpha} \, N_t$, where $(\mathcal F_t)_{t\geq 0}$ is the filtration of the process. For Bellman-Harris processes, in the  Gaussian regime, it has been obtained in \cite{kang_central_1999}. We add  then the Gaussian contribution of $X_t^{\delta}=N_{t+\delta}-\E(N_{t+\delta}\vert \mathcal F_t)$, which is at the same order of magnitude and asymptotically independent. This yields the long time behavior of $R_t^{\delta}=X_t^{\delta}+Y_t^{\delta}$ and the expression of the variance of the limiting Gaussian law $\sigma_{\delta}^2$.  For the  oscillating and non-Gaussian regime, we use recent and fine results of \cite{iksanov_asymptotic_2024} to get the asymptotic behavior of $Y_t^{\delta}$. The second contribution $X_t^{\delta}$ still behaves with Gaussian fluctuations of order $\sqrt{N_t}$ but is now negligible.     \\

We then exploit these results  to infer the value of the parameters $(k,\theta)$
from the observations of $R_t^{\delta}$.
The initial step is to discern which of the two regimes, \eqref{regGauss} or \eqref{Regimeaussilent},  we find ourselves. This can be achieved by quantifying the order of magnitude of $R_t^{\delta}$. For this step, one has to be careful to the time parameter $\delta$, since $\vert M_{\delta}\vert$  vanishes for some values of
$\delta$, see Section \ref{subsect:determination_regime}. 
The other difficulty comes from identifiability of the parameters since in the Gaussian regime, the limiting variance $\sigma_{\delta}^2$ is not injective with respect to the parameter $k$. We propose a procedure for the estimation which take into account these issues. 
 We evaluate our method 
by using  simulations, respectively in  Sections \ref{subsect:inference_Gaussian_regime} and   \ref{subsect:inference_oscillating_regime}
for the  Gaussian regime \eqref{regGauss}  and the oscillating regime \eqref{Regimeaussilent}. We recover in any case the parameters of the lifetime from the population size at given times and study the speed of convergence. This shows the efficiency of our procedure in our setting. Finally, in Section  \ref{desdata}, we use our approach for inference  on  two data set. This allows to estimate the heterogeneity for the cell division time from monitoring at the population level, which  is our original motivation for
this work. 
\\

\section{Regimes of asymptotic fluctuations}
\label{sect:asympt}

To present the different regimes of convergence of the fluctuations, we need to introduce some notations. 
Recall that $k \in [1,\infty)$ and $\theta\in (0,\infty)$ are fixed and the density $g$ of the Gamma law with parameter $(k,\theta)$ has been given in \eqref{defdens}. Recall also that the power of a complex number is defined for all $z\in\mathbb{C}^*$, $x\in\mathbb{R}$ as
\begin{equation}\label{eq:power_complex_number}
z^x = \left|z\right|^x\exp\left(ix\text{arg}(z)\right),
\end{equation}
where $\text{arg}(z)\in(-\pi,\pi]$. We introduce the cumulative distribution function $G$, and the Laplace transform $\mathcal{L}g$ related to our distribution,
$$
G(a) = \int_0^a g(s) ds= \int_0^a \frac{s^{k-1}e^{-\frac{s}{\theta}}}{\Gamma(k)\theta^k}ds,\qquad \mathcal{L}g(\rho)=\int_0^{+\infty} g(u) e^{-\rho u} du = \frac{1}{(1+\rho\theta)^k},$$
for respectively $a\geq 0$, and $\rho \in \mathbb{C}$ such that $Re(\rho) >-1/\theta$. With an abuse of notation, for all $\rho \in \mathbb{C}\backslash\{-1/\theta\}$ such that $Re(\rho) \leq -1/\theta$, we also denote $\mathcal{L}g(\rho) = 1/(1+\rho\theta)^k$ the analytic continuation of $\mathcal{L}g$. Finally, we introduce the following age distribution
\begin{equation}\label{eq:stationary_age_distribution}
p(a) := \frac{e^{-\alpha a}(1-G(a))}{\int_0^{\infty} e^{-\alpha u}(1-G(u)) du}.
\end{equation}
The Bellman-Harris process is defined inductively by starting from an initial number of individuals $N_0\in \mathbb{N}^*$ and initial ages $(A_i)_{i=1, \ldots N_0}$. 
 The individuals live during random times, which are independent and  distributed as a Gamma law $\Gamma(k,\theta)$. At the end of their  life, they are replaced by two individuals with age $0$. When we start from one single individual with age $a\in\mathbb{R}_+$, we denote by $\mathbb{E}_a$ and $\text{Var}_a$ the associated expectation and the associated variance. Unless otherwise specified, we start from one single individual with age $0$.

We now give the different regimes of convergence of the fluctuations. The result holds for a Bellman-Harris process starting from one individual with age $0$. Classical arguments for branching processes enable the extension of the results to more general initial conditions.

\begin{theorem}
\label{thm:main_result}
Consider a Bellman-Harris $N$ where the lifespan of individuals is distributed as $\zeta \sim \Gamma(k,\theta)$, with $k\geq 1, \theta >0$. The following statements hold.
\begin{enumerate}[$i)$]
    \item If $k < k_c$, then for any $\delta>0$,     $$
    \frac{R_t^{\delta}}{\sqrt{N_t}} \overset{\mathcal{L}}{\underset{t\longrightarrow+\infty}{\Longrightarrow}} \mathcal{N}\left(0, \sigma_{\delta}^2 \right),
    $$
  where, denoting for all $x\geq0$,
$$
j^{(\delta)}(x) = \mathbb{E}_x\left[N_{\delta}\right] - e^{\alpha\delta} \text{ and } h^{(\delta)}(x)=\mathbb{E}\left[N_{x+\delta}\right]- e^{\alpha\delta}\mathbb{E}\left[N_x\right],
$$ 
we have 
\begin{equation}\label{eq:expression_variance_first}
\begin{aligned}
\sigma_{\delta}^2 &= \int_{\mathbb{R}^+} \emph{Var}_a(N_{\delta})p(a)da + 2\alpha\int_{\mathbb{R}_+} \emph{Var}\Big(j^{(\delta)}(x)1_{[0,\zeta[}(x) \\
&\qquad \qquad \qquad \qquad +2h^{(\delta)}(x - \zeta)1_{[\zeta, +\infty[}(x)\Big) e^{-\alpha\,x}dx < +\infty.
\end{aligned}
\end{equation}
   
\item  If  $k = k_c$, 
then for any $\delta>0$, 
    $$
    \frac{R_t^{\delta}}{\sqrt{tN_t}} \overset{\mathcal{L}}{\underset{t\longrightarrow+\infty}{\Longrightarrow}} \mathcal{N}\left(0, {\sigma_{\delta}}^{2}\right),
    $$
   where, recalling the definitions of $\lambda$ and $\tau$ given in~\eqref{expcst}, we have 
$$
\sigma_{\delta}^2 = \frac{\alpha}{k^2}\frac{2^{\frac{2}{k}}}{2^{\frac{2}{k}}-2^{\frac{1}{k}}}\left|e^{(\lambda+i\tau)\delta} - e^{\alpha\delta}\right|^2.
$$

\item If $k > k_c$,
then for any $\delta>0$,
    $$
    \left| \frac{R_t^{\delta}}{\exp\left(\lambda t\right)} -  2\left|M_{\delta} \right| \cos\left[\tau t +  \arg\left(M_{\delta}\right)\right]\right|  \overset{\mathbb{P}}{\underset{t\longrightarrow+\infty}{\longrightarrow}} 0,$$
where 
\begin{equation}\label{eq:random_modulus_oscillating_regime}
M_{\delta}= \left(e^{(\lambda + i\tau)\delta}-e^{\alpha \delta}\right)M,
\end{equation}
and $M$ is a complex random variable that does not depend on $\delta$.
\end{enumerate}
\end{theorem}
This theorem shows that two different regimes may be observed, excluding the very specific critical behavior. They depend on the value 
of $k$ and correspond respectively to a Gaussian regime, with the expected order of magnitude $\sqrt{N_t}$, and an oscillatory regime. The threshold value $k_c$ between these two regimes has been defined in~\eqref{eq:definition_thresold_kc}, and is approximatively $57.24$, as illustrated in Figure~\ref{fig:thresold_spectral_gap}.

Recalling  that $k = 1/(\sigma/\mu)^2$, the threshold $k_c$ is equivalent to a coefficient of variation $\sigma/\mu$ of approximately $0.1322$. The oscillating regime (Theorem \ref{thm:main_result} $iii)$) then corresponds to a small variability of the lifespan, making the convergence to the age profile slow, while a large coefficient of variation helps for mixing and leads to the Gaussian regime (Theorem~\ref{thm:main_result}~$i)$). We illustrate this in Figure \ref{fig:oscillations_explication}.

\begin{figure}[!ht]
    \centering
    \begin{subfigure}[t]{0.45\textwidth}
        \centering
    \includegraphics[width = \textwidth]{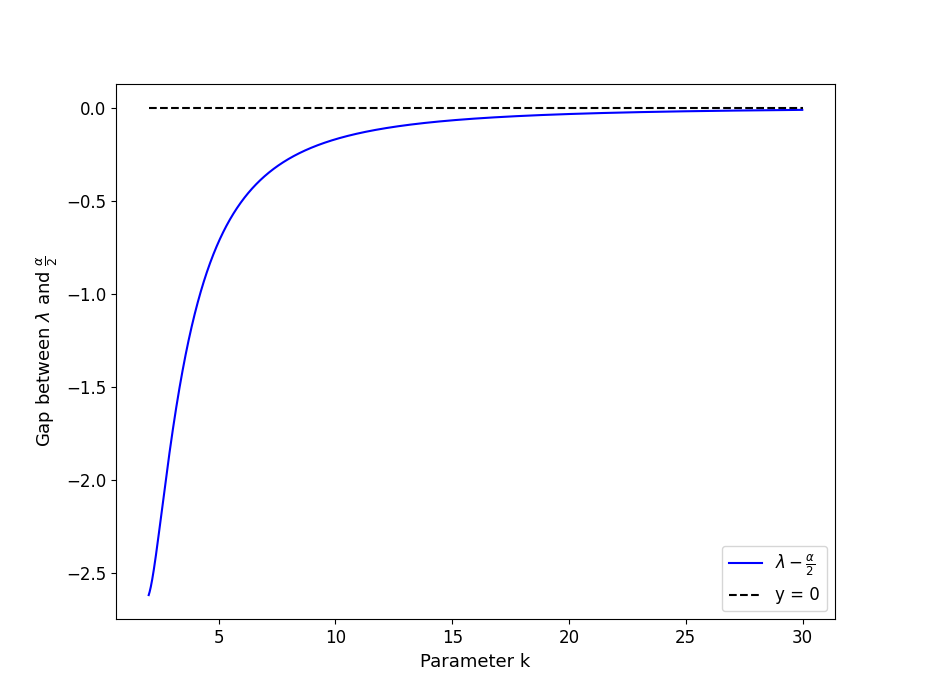}
    \caption{For $k\in[2,30]$.}
    \end{subfigure}
    \hfill
    \begin{subfigure}[t]{0.45\textwidth}
    \centering
    \includegraphics[width = \textwidth]{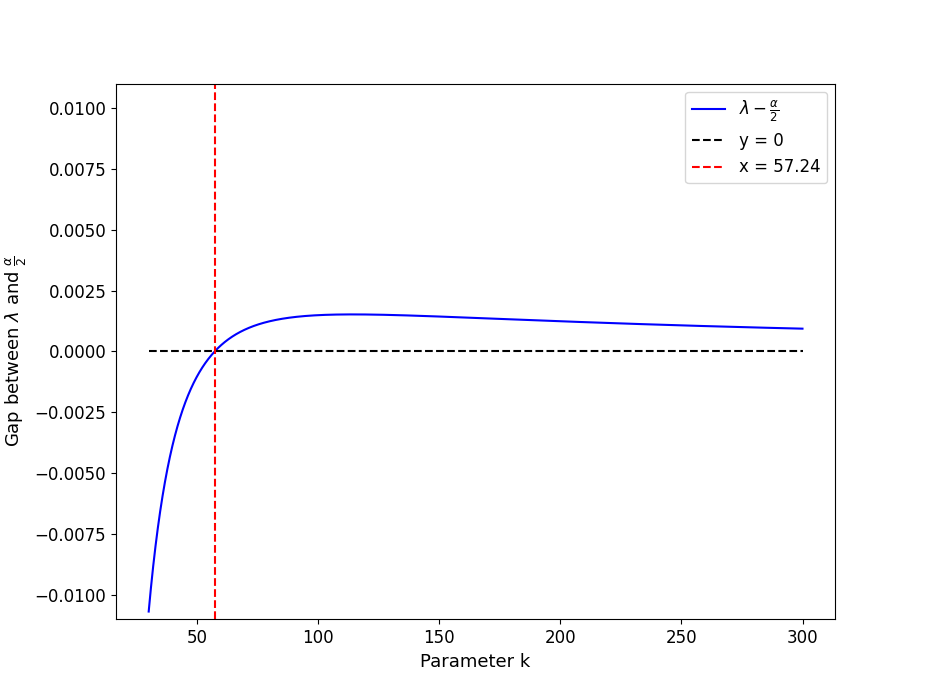}
    \caption{For $k\in[30,300]$.}
    \end{subfigure}
    \caption{Curve of $\lambda - \frac{\alpha}{2}$ $= \frac{2^{\frac{1}{k}}\left(2\cos\left(\frac{2\pi}{k}\right) -1\right) -1}{2\theta}$ versus the parameter $k$, for $\theta = 1$.}\label{fig:thresold_spectral_gap}
\end{figure}

\begin{figure}[!ht]
    \centering
    \begin{subfigure}[t]{0.375\textwidth}
        \centering
        \includegraphics[width=\textwidth]{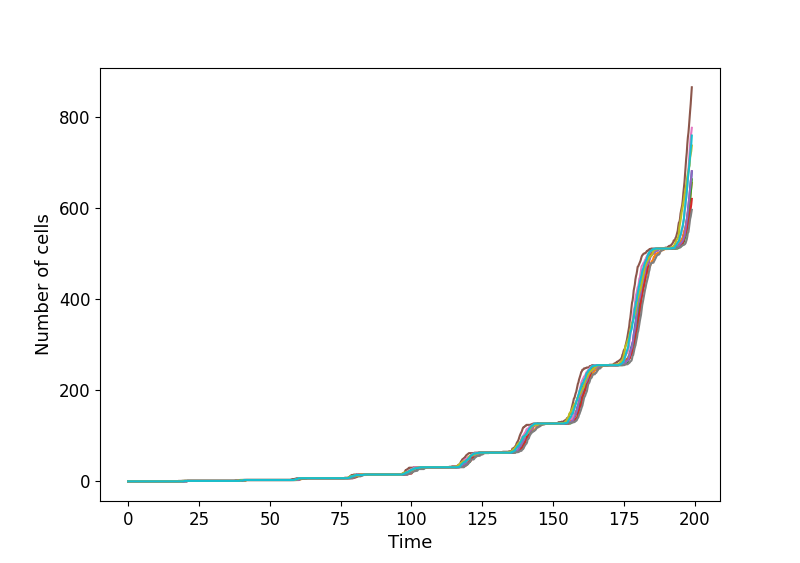}
        \caption{Ten dynamics in the oscillatory regime. This corresponds to the parameters $\left(k,\theta\right) = (400,0.05)$.}
        \label{fig:dynamics_oscillatory}
    \end{subfigure}
    \hfill
    \begin{subfigure}[t]{0.375\textwidth}
        \centering
        \includegraphics[width=\textwidth]{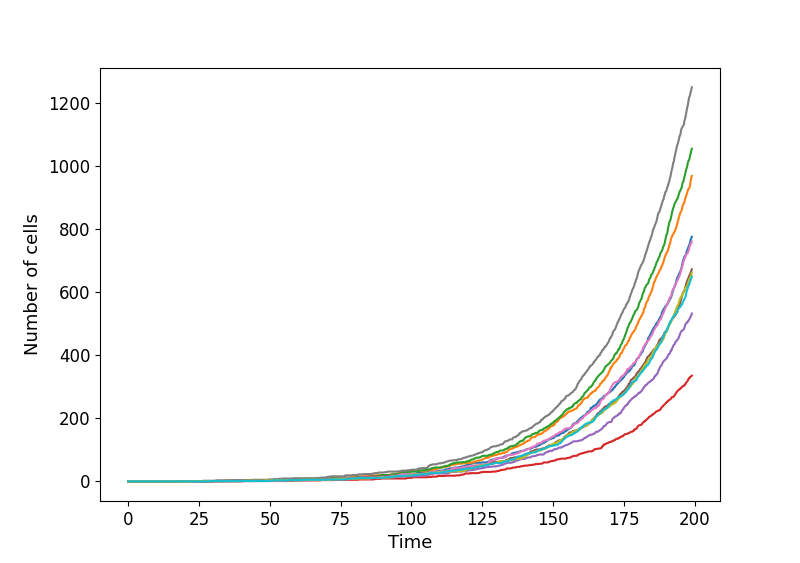}
        \caption{Ten dynamics in the Gaussian regime. This corresponds to the parameters $\left(k,\theta\right) = \left(6.25,3.38\right)$.}
        \label{fig:dynamics_gaussian}
    \end{subfigure}
    \hfill
    \begin{subfigure}[t]{0.375\textwidth}
        \centering
        \includegraphics[width=\textwidth]{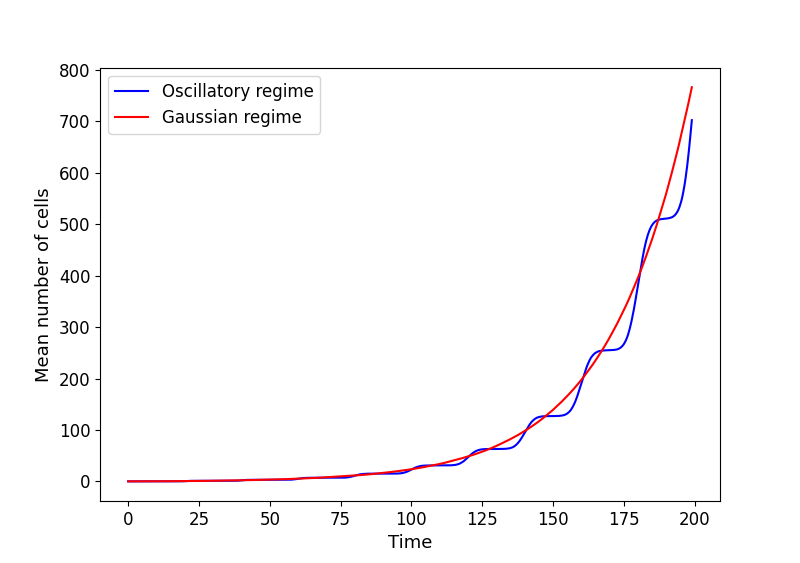}
        \caption{Evolution of the mean of $500$ trajectories. In blue, the oscillatory regime with parameters of ${\bf (a)}$. In red, the gaussian regime with parameters of~${\bf (b)}$.}
        \label{fig:mean_evolution}
    \end{subfigure}
    \caption{Illustration of the two regimes, for parameters with the same Malthusian coefficient.}
    \label{fig:oscillations_explication}
\end{figure}

We prove now  Theorem \ref{thm:main_result} by splitting  $R_t^{\delta}$ in two parts. At each time $t\geq0$, we can label the individuals by $i=1, \ldots, N_t$ and we denote by $N_{t,\delta}^i$ the number of offspring of individual $i$ at time $t+\delta$. We introduce 
 $$R_t^{\delta}=X_t^{\delta}+Y_t^{\delta},$$
 where
 \begin{equation}\label{eq:expression_decomposition_residuals}
 \begin{aligned}
 X_t^{\delta}&= N_{t+\delta}-\E(N_{t+\delta}\vert \mathcal F_t)=\sum_{i=1}^{N_t}  \left(N_{t,\delta}^i-\mathbb E(N_{t,\delta}^i \vert \mathcal F_t)\right), \\
 Y_t^{\delta}&=\E(N_{t+\delta}\vert \mathcal F_t)-e^{\delta  \alpha} \, N_t=\sum_{i=1}^{N_t} \left( \mathbb E(N_{t,\delta}^i \vert \mathcal F_t)-e^{\alpha \delta}\right).
 \end{aligned}
 \end{equation}
The result in  \cite{iksanov_asymptotic_2024}
applied to a suitable functional would allow to deal directly with $R_t^{\delta}$ but the decomposition   $R_t^{\delta}=X_t^{\delta}+Y_t^{\delta}$ is interesting for itself. It yields  two asymptotically independent contributions, which provide a natural expression of the limiting variance.
\subsection{Asymptotic behavior of  \texorpdfstring{$X_t^{\delta}$}{X\_t\^d}} Let us study the asymptotic behavior of $(X_{t}^{\delta})_{t\geq0}$ for all $\delta >0$. We first remark by \eqref{eq:expression_decomposition_residuals} that for all $t\geq0$ and $\delta >0$, the quantity $X_{t}^{\delta}$ can be seen as the sum of $N_t$ variables that are centered and independent conditionally with respect to~$\mathcal{F}_t$. Thus, a slight adaptation of the central limit theorem  allows to obtain the asymptotic behavior of $(X_t^{\delta})_{t\geq0}$. To do so, we need the following upper bounds and regularity for  the moments of $N_{\delta}$.

\begin{lemma}\label{lemma:preliminaries_asymptotic_Xt} For any $\delta>0$, the following statements hold.
\begin{enumerate}[i)]
    \item $\sup_{a\in [0,\infty)} \left\{\mathbb{E}_a[N_{\delta}^2]+ \mathbb{E}_a\left[|N_{\delta} - \mathbb{E}_a[N_\delta]|^3\right] \right\}<\infty$.
    \item The function $a \mapsto \mathbb{E}_a\left[N_{\delta}\right]$ is continuous.
    \item The function $a \mapsto \mathbb{E}_a\left[N_{\delta}^2\right]$ is continuous.
\end{enumerate}
\end{lemma}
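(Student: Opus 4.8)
The plan is to treat part $i)$ by a stochastic domination argument and then to deduce the continuity statements $ii)$ and $iii)$ from first-step renewal decompositions combined with dominated convergence. The whole difficulty is concentrated in obtaining moment bounds that are \emph{uniform} in the initial age $a$, and this is precisely where the standing assumption $k\geq 1$ enters.

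First, the uniform moment bounds. I would use that for $k\geq 1$ the law $\Gamma(k,\theta)$ has increasing failure rate: its hazard function $h(a)=g(a)/(1-G(a))$ is nondecreasing with $h(a)\uparrow 1/\theta$ as $a\to\infty$, so $h(a)\leq 1/\theta$ for every $a\geq 0$. Consequently the residual lifetime of an individual of age $a$ has hazard rate $s\mapsto h(a+s)\leq 1/\theta$, hence stochastically dominates an $\mathrm{Exp}(1/\theta)$ variable (hazard-rate order implies stochastic order). Assigning lifetimes along the full binary genealogical tree and coupling each Bellman--Harris lifetime with the corresponding exponential one, every birth time and every division time in the Bellman--Harris tree is at least the corresponding time in a Yule process of rate $1/\theta$. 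Since in a binary-fission process $N_t$ equals one plus the number of division events before time $t$, the coupling gives $N_\delta\leq N_\delta^{\mathrm{Yule}}$ almost surely when the root starts at any age $a$. As $N_\delta^{\mathrm{Yule}}$ is geometric with mean $e^{\delta/\theta}$, it has all moments finite and independent of $a$, so $\sup_a \mathbb{E}_a[N_\delta^2]<\infty$ and $\sup_a\mathbb{E}_a[N_\delta^3]<\infty$. The third absolute central moment is then controlled by $\mathbb{E}_a|N_\delta-\mathbb{E}_a N_\delta|^3\leq 16\,\mathbb{E}_a[N_\delta^3]$, which proves $i)$.

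For $ii)$ and $iii)$ I would condition on the residual lifetime $\tau$ of the initial individual, whose density on $(0,\infty)$ is $g(a+s)/(1-G(a))$: if $\tau>\delta$ then $N_\delta=1$, while if $\tau=s\leq\delta$ the individual is replaced by two independent copies started at age $0$, so $N_\delta=N'_{\delta-s}+N''_{\delta-s}$. Writing $m(u)=\mathbb{E}[N_u]$ and $m_2(u)=\mathbb{E}[N_u^2]$ for the age-$0$ process, this yields the renewal identities
\begin{align*}
\mathbb{E}_a[N_\delta] &= \frac{1-G(a+\delta)}{1-G(a)}+\frac{2}{1-G(a)}\int_0^\delta g(a+s)\,m(\delta-s)\,ds,\\
\mathbb{E}_a[N_\delta^2] &= \frac{1-G(a+\delta)}{1-G(a)}+\frac{2}{1-G(a)}\int_0^\delta g(a+s)\bigl(m_2(\delta-s)+m(\delta-s)^2\bigr)\,ds.
\end{align*}
Both $m$ and $m_2$ are bounded on $[0,\delta]$ by the Yule bound of part $i)$, the survival function $1-G(a)$ is continuous and strictly positive on $[0,\infty)$, and $g$ is continuous. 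Hence the prefactors are continuous in $a$, and for the integrals I would apply dominated convergence: as $a\to a_0$ one has $g(a+s)\to g(a_0+s)$ pointwise in $s$, while for $a$ in a neighbourhood of $a_0$ the integrands are dominated by $\sup_{[a_0-1,\,a_0+\delta+1]}g$ times the finite bound on $m_2+m^2$. This gives continuity of $a\mapsto\mathbb{E}_a[N_\delta]$ and $a\mapsto\mathbb{E}_a[N_\delta^2]$, proving $ii)$ and $iii)$.

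The only genuinely delicate point is the first step: controlling the moments uniformly over all ages, including large $a$, where the residual lifetime is short and divisions can cascade quickly. The Yule domination resolves this cleanly, and it is exactly the increasing-failure-rate property of the Gamma law for $k\geq 1$ that makes the division hazard uniformly bounded by $1/\theta$; for $k<1$ the hazard blows up near $0$ and no such uniform bound holds, which is consistent with the paper restricting throughout to $k\geq 1$.
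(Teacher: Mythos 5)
Your proof is correct, and part $i)$ takes a genuinely different route from the paper's. The paper proves the uniform moment bounds through the first-generation integral equations: writing $\mathbb{E}_a[N_\delta]$ and $\mathbb{E}_a[N_\delta^2]$ against the residual-lifetime density $g(a+x)/(1-G(a))$ and using that this density integrates to at most $1$ over $[0,\delta]$, it obtains $\mathbb{E}_a[N_\delta^2]\leq 4\,\mathbb{E}_0[N_\delta^2]+1$, thereby reducing the uniform-in-$a$ bound to finiteness of moments at age $0$ (quoted from Harris's integral equations); the third absolute moment is handled the same way. Your Yule-coupling argument instead proves finiteness from scratch: the increasing-failure-rate property of $\Gamma(k,\theta)$ for $k\geq 1$ bounds every hazard by $1/\theta$, so a monotone coupling of lifetimes node by node on the binary genealogical tree dominates $N_\delta$ by a geometric random variable, uniformly in the initial age $a$. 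This is more self-contained (no appeal to finiteness of $\mathbb{E}_0[N_\delta^2]$ and $\mathbb{E}_0[N_\delta^3]$), yields all moments at once with an explicit bound, and makes transparent where $k\geq1$ enters; the price is that the method is tied to the IFR structure of the Gamma law, whereas the paper's reduction to the age-$0$ case works for an arbitrary lifetime density. For $ii)$ and $iii)$ your argument — renewal decomposition plus dominated convergence, with a sup of $g$ times the bounded moments as dominating function — is essentially identical to the paper's. One caveat on your closing remark: the restriction $k\geq1$ is indeed what makes \emph{your} part-$i)$ method work, but the uniform moment bound itself does not fail for $k<1$, since the paper's integral-equation argument gives it for all $k>0$; what genuinely requires $k\geq1$ is the boundedness of $g$ used for the continuity statements $ii)$--$iii)$.
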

Note that $i)$ implies that the first moment and variance of $N_{\delta}$ are bounded with respect to the age $a$ of the root at time $0$,   for any fixed time $\delta>0$. This bound is natural : a large age accelerates the first reproduction but  in any case, we have then two individuals  with age $0$. 
\begin{proof}
Let us first prove $i)$ and let $a\geq 0$. We consider  a Bellman-Harris process $(N_t)_{t\geq 0}$ starting from  one single individual with age $a$. Then, the first time of division follows  a Gamma law with parameters $(k,\theta)$ conditioned to be larger than $a$ and its density is~\hbox{$x\in\mathbb{R}_+\mapsto g(a+x)/(1- G(a))$}. Using an integral equation (\cite[Theo. $15.1$]{Harris_1963} or~\hbox{\cite[Eq.~$7.1$]{crump_general_1968}}), we obtain by decomposition with respect to the age of death
\begin{equation}\label{eq:integral_equation}
\begin{aligned}
\mathbb{E}_a[N_{\delta}] &= 2I_{\delta}(a) + \frac{1-G(a+\delta)}{1- G(a)}, \quad \text{where } \, I_{\delta}(a)=\int_{0}^{\delta} \mathbb{E}_0[N_{\delta-x}] \frac{g(a+x)}{1- G(a)}dx . \\
\end{aligned}
\end{equation}
Moreover, using  \cite[Theorem $18.1$]{Harris_1963}, we have
\begin{equation}\label{eq:preliminaries_Xt_integraleq_square}
\mathbb{E}_a[N_{\delta}^2] = \int_0^{\delta}\mathbb{E}\left[\left(\overline{N}_{\delta - x}^{(1)} + \overline{N}_{\delta - x}^{(2)}\right)^2\right]\frac{g(a+x)}{1- G(a)} dx + \frac{1-G(a+\delta)}{1- G(a)},
\end{equation}
where $(\overline{N}_t^{(1)})_{t\geq0}$ and $(\overline{N}_t^{(2)})_{t\geq0}$ are two independent Bellman-Harris processes starting from an age $0$. Then, using the inequality $(y+z)^2 \leq 2y^2 + 2z^2$ for all $y,z\in\mathbb{R}$, and the increasing of $t \mapsto \mathbb{E}_0[N_t^2]$, allows to conclude that
\begin{equation}\label{eq:preliminaries_Xt_firstbound}
\mathbb{E}_a[N_{\delta}^2] \leq 4\mathbb{E}_0[N_{\delta}^2] + 1.
\end{equation}
Secondly, as $\mathbb{E}_a[N_\delta] \geq 0$ and $N_{\delta} \geq 0$ almost surely, we have 
$$
|N_{\delta} - \mathbb{E}_a[N_\delta]|^3 \leq \max\left(\mathbb{E}_a[N_\delta]^3, N_{\delta}^3\right) \hspace{1.5mm} \leq N_{\delta}^3 + \mathbb{E}_a[N_\delta]^3 \text{ almost surely}.
$$
In view of \eqref{eq:preliminaries_Xt_firstbound}, we easily have that $\mathbb{E}_a[N_\delta]^3 \leq \left(4\mathbb{E}_0[N_{\delta}^2] + 1\right)^{\frac{3}{2}}$. Proceeding as when we obtain \eqref{eq:preliminaries_Xt_firstbound} allows to obtain that $\mathbb{E}_a\left[N_{\delta}^3\right]$ is bounded by a constant independent from $a$. These two statements combined with \eqref{eq:preliminaries_Xt_firstbound} imply that $i)$ is proved. \\

Now, for $ii)$, as $k \geq 1$, one can easily see that $||g||_{\infty} < +\infty$. 
Using \eqref{eq:integral_equation} and the fact that $G$ is continuous, we only have to prove that $a\mapsto I_{\delta}(a)$ is continuous to obtain the continuity of $a\mapsto \mathbb{E}_a[N_{\delta}]$. It is easily shown by noticing that 
$$
\left|\mathbb{E}_0[N_{\delta-x}] g(a+x)1_{\{x\in[0,\delta]\}}\right| \leq ||g||_{\infty}\mathbb{E}_0[N_{\delta}] 1_{\{x\in[0,\delta]\}},
$$
and that the right-hand side  is integrable.  This proves $ii)$. \\

The proof of $iii)$ is very similar. We use the integral equation \eqref{eq:preliminaries_Xt_integraleq_square}, and then the following domination
$$
\left|\mathbb{E}\left[\left(\overline{N}_{\delta - x}^{(1)} + \overline{N}_{\delta - x}^{(2)}\right)^2\right] g(a+x)1_{\{x\in[0,\delta]\}}\right| \leq 4||g||_{\infty}\mathbb{E}_0[N_{\delta}^2] 1_{\{x\in[0,\delta]\}}.
$$
This completes the proof.
\end{proof}

With this lemma, we are able to obtain the asymptotic behavior of $(X_{t}^{\delta})_{t\geq0}$ for all~$\delta >0$ by a simple adaptation of the proof of the central limit theorem with Lindeberg's condition. Let us do this through the following proposition.
\begin{proposition}\label{prop:asymptotic_Xt} For any $\delta >0$, $s\in\mathbb{R}$,
$$
\mathbb{E}\left[ \exp\left( is\frac{X_{t}^{\delta}}{\sqrt{N_t}} \right) \, \bigg| \, \mathcal{F}_t\right] \overset{a.s.}{\underset{t\longrightarrow+\infty}{\longrightarrow}} \exp\left(-\frac{\sigma_{X,\delta}^2s^2}{2}\right),
$$
where $\sigma_{X,\delta}^2 = \int_{\mathbb{R}^+} \emph{Var}_a(N_{\delta}) \, p(a) \, da < +\infty$.
\end{proposition}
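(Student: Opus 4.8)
The plan is to condition on $\mathcal{F}_t$ and exploit the conditional independence already recorded in \eqref{eq:expression_decomposition_residuals}. Write $a_1,\dots,a_{N_t}$ for the ($\mathcal{F}_t$-measurable) ages of the individuals alive at time $t$. By the branching property, conditionally on $\mathcal{F}_t$ the offspring counts $N_{t,\delta}^1,\dots,N_{t,\delta}^{N_t}$ are independent, with $N_{t,\delta}^i$ distributed as $N_\delta$ under $\mathbb{P}_{a_i}$; in particular $\mathbb{E}(N_{t,\delta}^i\mid\mathcal{F}_t)=\mathbb{E}_{a_i}[N_\delta]$ and $\text{Var}(N_{t,\delta}^i\mid\mathcal{F}_t)=\text{Var}_{a_i}(N_\delta)$. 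Hence the conditional characteristic function factorizes:
$$\mathbb{E}\left[\exp\left(is\frac{X_t^\delta}{\sqrt{N_t}}\right)\,\Big|\,\mathcal{F}_t\right]=\prod_{i=1}^{N_t}\phi_i(t),\qquad \phi_i(t)=\mathbb{E}\left[\exp\left(\frac{is}{\sqrt{N_t}}\xi_i\right)\,\Big|\,\mathcal{F}_t\right],$$
where $\xi_i=N_{t,\delta}^i-\mathbb{E}_{a_i}[N_\delta]$ is centered conditionally on $\mathcal{F}_t$.

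Next I would compare this product to the Gaussian target, recalling that $N_t\to\infty$ almost surely (binary fission, no death). Using $|e^{iu}-1-iu+\tfrac{u^2}{2}|\le\tfrac{|u|^3}{6}$ together with $\mathbb{E}(\xi_i\mid\mathcal{F}_t)=0$ gives, for each $i$,
$$\left|\phi_i(t)-\left(1-\frac{s^2}{2N_t}\text{Var}_{a_i}(N_\delta)\right)\right|\le\frac{|s|^3}{6N_t^{3/2}}\,\mathbb{E}_{a_i}\left[\left|N_\delta-\mathbb{E}_{a_i}[N_\delta]\right|^3\right].$$
Setting $d_i(t)=\exp\left(-\tfrac{s^2}{2N_t}\text{Var}_{a_i}(N_\delta)\right)$, one has $|\phi_i(t)|\le1$, $|d_i(t)|\le1$, and $|e^{-x}-(1-x)|\le x^2/2$ for $x\ge0$, so the standard product-difference bound $\big|\prod \phi_i-\prod d_i\big|\le\sum|\phi_i-d_i|$ yields
$$\left|\prod_{i=1}^{N_t}\phi_i(t)-\exp\left(-\frac{s^2}{2N_t}\sum_{i=1}^{N_t}\text{Var}_{a_i}(N_\delta)\right)\right|\le\frac{|s|^3 C}{6\sqrt{N_t}}+O\!\left(\frac{1}{N_t}\right),$$
where $C=\sup_{a\ge0}\mathbb{E}_a[|N_\delta-\mathbb{E}_a[N_\delta]|^3]<\infty$ by Lemma \ref{lemma:preliminaries_asymptotic_Xt}$(i)$, and the uniform bound on $\text{Var}_a(N_\delta)$ controls the $O(1/N_t)$ term. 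Since $N_t\to\infty$ a.s., this right-hand side tends to $0$ almost surely.

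It then remains to identify the limit of $\tfrac1{N_t}\sum_{i=1}^{N_t}\text{Var}_{a_i}(N_\delta)$. By Lemma \ref{lemma:preliminaries_asymptotic_Xt}, the map $a\mapsto\text{Var}_a(N_\delta)=\mathbb{E}_a[N_\delta^2]-\mathbb{E}_a[N_\delta]^2$ is continuous (parts $(ii)$ and $(iii)$) and bounded (part $(i)$). The sum above is exactly the integral of this bounded continuous function against the empirical age distribution $\tfrac1{N_t}\sum_i\delta_{a_i}$ of the population at time $t$. The almost sure weak convergence of this empirical measure to the stationary age distribution $p(a)\,da$ is the classical result on age-dependent (Bellman--Harris, and more generally CMJ) processes counted with random characteristics, which I would invoke from \cite{athreya_convergence_1976}; applied here it gives $\tfrac1{N_t}\sum_i\text{Var}_{a_i}(N_\delta)\to\int_{\mathbb{R}^+}\text{Var}_a(N_\delta)\,p(a)\,da=\sigma_{X,\delta}^2$ a.s. Combining the three steps delivers the announced convergence of the conditional characteristic function to $\exp(-\sigma_{X,\delta}^2 s^2/2)$.

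The main obstacle is this last probabilistic input: the almost sure convergence of the empirical age profile is a genuinely non-trivial fact, and the care needed lies in transferring it to the test function $a\mapsto\text{Var}_a(N_\delta)$, which is continuous and bounded but not compactly supported. Here the boundedness and continuity furnished by Lemma \ref{lemma:preliminaries_asymptotic_Xt} are exactly what is required; moreover, since both $p(a)\,da$ and each $\tfrac1{N_t}\sum_i\delta_{a_i}$ are probability measures and the limit is a probability measure, no mass escapes to infinity and weak convergence automatically yields convergence of the integral. The remaining steps (the Taylor remainder control and the product comparison) are routine once the uniform third-moment bound of Lemma \ref{lemma:preliminaries_asymptotic_Xt}$(i)$ is in hand.
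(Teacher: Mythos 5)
Your proposal is correct and follows essentially the same route as the paper's proof: conditional factorization of the characteristic function via the branching property, third-order Taylor control of each factor combined with the product-difference inequality $\left|\prod_j x_j - \prod_j y_j\right| \leq \sum_j |x_j - y_j|$, the uniform moment bounds and continuity from Lemma \ref{lemma:preliminaries_asymptotic_Xt}, and the almost sure convergence of the empirical age measure from \cite{athreya_convergence_1976} (Corollary 3) applied to the bounded continuous test function $a \mapsto \mathrm{Var}_a(N_\delta)$. No substantive difference from the paper's argument.
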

\begin{rem}
The variance $\sigma_{X,\delta}^2$ is finite for all $\delta >0$ because $x \mapsto \Var_{x}(N_{\delta})$ is bounded by Lemma~\ref{lemma:preliminaries_asymptotic_Xt}, and $p$ is  integrable.
\end{rem}
\begin{proof} Let $t\geq0,\,\delta >0$ and $j\in\llbracket1,N_t\rrbracket$. We denote by  $A_t^j$ the age of the individual $j$ at time $t$, i.e. the  age of the intial individual in   $(N_{s,\delta}^j)_{s\geq 0}$. Besides, we introduce the random variables
$$
Z_{t,\delta}^j = \frac{N_{t,\delta}^j - \mathbb{E}[N_{t,\delta}^j | \mathcal{F}_t]}{\sqrt{N_t}}, \qquad \sigma_{t,j,\delta}^2 = \Var_{A_t^j}(N_{\delta}),
$$
and for all $s\in\mathbb{R}$ 
$$
\begin{aligned}
K_{t,\delta}^j(s) = \mathbb{E}[ e^{isZ_{t,\delta}^j}| \mathcal{F}_t] - \exp\left(-\frac{s^2\sigma_{t,j,\delta}^2}{2N_t}\right).
\end{aligned}
$$
\noindent As $\mathbb{E}\left[Z_{t,\delta}^j | \mathcal{F}_t\right] = 0$ and $\mathbb{E}\left[(Z_{t,\delta}^j)^2 | \mathcal{F}_t\right] = \frac{\sigma_{t,j,\delta}^2}{N_t}$, we have by the triangular inequality that for all $s\in\mathbb{R}$
\begin{align}\label{eq:asymptotic_Xt_intermediate_zero}
&\left|K_{t,\delta}^j(s)\right| \leq \left|\mathbb{E}\left[e^{isZ_{t,\delta}^j} - \left(1 + isZ_{t,\delta}^j - \frac{s^2(Z_{t,\delta}^j)^2}{2}\right)\,\Big|\, \mathcal{F}_t \right]\right|\\
&\qquad \qquad \qquad \qquad \qquad \qquad + \left|\left(1 -\frac{s^2\sigma_{t,j,\delta}^2}{2N_t}\right)-e^{-s^2\sigma_{t,j,\delta}^2/2N_t}\right|. \nonumber
\end{align}
In addition, for all $z\in\ \mathbb{R_-}\cup i\mathbb R$, $n\geq1$, we have
\begin{equation}\label{eq:asymptotic_Xt_intermediate_zero_bis}
\left|e^{z} - \sum_{j = 0}^{n} \frac{z^j}{j!}\right| \leq \frac{|z|^{n+1}}{(n+1)!},
\end{equation}
using Taylor-Lagrange formula for $z\in \R_{-}$ and  Lemma 10.1.5 in \cite{athreya_measure_2006} for $z\in  i\mathbb R$.
We first apply~\eqref{eq:asymptotic_Xt_intermediate_zero_bis} for $z =is Z_{t,\delta}^j$ and $n = 2$ to bound the first term of the right-hand side of~\eqref{eq:asymptotic_Xt_intermediate_zero}. Then, we use~\eqref{eq:asymptotic_Xt_intermediate_zero_bis} for $z = -s^2\sigma_{t,j,\delta}^2/2N_t$ and $n= 1$ to bound the second term of the right-hand side of~\eqref{eq:asymptotic_Xt_intermediate_zero}. Finally, we bound both terms by their supremums in the age variable, that are finite in view of Lemma~\ref{lemma:preliminaries_asymptotic_Xt}. We obtain the following bound, for all $s\in\mathbb{R}$,
\begin{equation}\label{eq:asymptotic_Xt_intermediate_first}
\left|K_{t,\delta}^j(s)\right| \leq \frac{|s|^3}{6 N_t^{\frac{3}{2}}}\sup_{a\in [0,\infty)} \left\{\mathbb{E}_a\left[\left|N_{\delta} - \mathbb{E}_a[N_\delta]\right|^3\right] \right\}  + \frac{s^4}{8 N_t^2}\sup_{a\in [0,\infty)} \left\{\left(\text{Var}_a[N_{\delta}^2]\right)^2\right\}.
\end{equation}
Moreover, by the branching property and Eq.~\eqref{eq:expression_decomposition_residuals}, for all $s\in\mathbb{R}$, 
$$
\mathbb{E}\left[e^{isX_{t}^{\delta}/\sqrt{N_t}} \, | \, \mathcal{F}_t\right] =
\prod_{j=1}^{N_t}\mathbb{E}\left[e^{isZ_{t,\delta}^j}| \mathcal{F}_t\right].
$$
Therefore, using this last equality, then the inequality $|\prod_{j=1}^{p}x_j - \prod_{j=1}^{p}y_j| \leq \sum_{i=1}^p |x_j - y_j|$ for $p\in\mathbb{N}^*$, $x \in\mathbb{C}^p$, $y\in\mathbb{C}^p$ verifying $\max_{1\leq i \leq p}\left(\max\left(|x_i|,|y_i|\right)\right) \leq 1$, and finally Eq.~\eqref{eq:asymptotic_Xt_intermediate_first}, yield that there exists $C >0$ such that
\begin{align}
&\left|\mathbb{E}\left[\exp\left( is\frac{X_{t}^{\delta}}{\sqrt{N_t}} \right) \, \bigg| \, \mathcal{F}_t\right] - \exp\left[-\frac{s^2}{2N_t}\sum_{j=1}^{N_t}\ \sigma_{t,j,\delta}^2 \right]\right|\nonumber \\
& \qquad \qquad \qquad \qquad  \qquad \qquad\leq \sum_{j=1}^{N_t} \left|K_{t,\delta}^j(s)\right| \leq C\left[\frac{|s|^3}{6 N_t^{\frac{1}{2}}} + \frac{s^4}{8N_t}\right]. \label{eq:asymptotic_Xt_intermediate_second}
\end{align}
The right-hand side goes to $0$ almost surely as $t$ tends to infinity, using Lemma \ref{lemma:preliminaries_asymptotic_Xt} and the fact that $N_t$ tends to infinity.
In addition, all the statements of Lemma \ref{lemma:preliminaries_asymptotic_Xt} imply that the function $x \mapsto \Var_{x}(N_{\delta})$ is bounded continuous. Then, as $p$ defined in~\eqref{eq:stationary_age_distribution} is the same age distribution as the one introduced \cite[p.41]{athreya_convergence_1976} (denoted $A$), we have by~\cite[Corollary 3]{athreya_convergence_1976}
$$
\frac{s^2}{2N_t}\sum_{j=1}^{N_t}\ \sigma_{t,j,\delta}^2~\overset{a.s.}{\underset{t\longrightarrow+\infty}{\longrightarrow}}~\frac{s^2}{2}\sigma_{X,\delta}^2.
$$
The latter combined with~\eqref{eq:asymptotic_Xt_intermediate_second} through a triangular inequality ends the proof.
\end{proof}
\noindent One can easily check that this proposition implies 
$$\frac{X_t^{\delta}}{\sqrt{N_t}} {\underset{t\longrightarrow+\infty}{\Longrightarrow}} \mathcal{N}\left(0, \sigma_{X,\delta}^2\right).$$ The conditional convergence will be useful for the study of $(X_t^{\delta},Y_t^{\delta})_{t\geq0}$.
\subsection{Asymptotic behavior of \texorpdfstring{$Y_t^{\delta}$}{Y\_t\^d}} Let us focus now on the asymptotic behavior of $(Y_{t}^{\delta})_{t\geq0}$ for all $\delta >0$. By Equation \eqref{eq:expression_decomposition_residuals}, we see that for all $t\geq0$, $\delta >0$, similar to $X_t^{\delta}$, the random variable $Y_t^{\delta}$ can be seen as the sum of $N_t$ independent random variables. 
To obtain the behavior of $(Y_t^{\delta})_{t\geq0}$, we use the result in \cite{iksanov_asymptotic_2024} which deals with more general processes. A preliminary result is required to apply it, that we present in the following lemma.

\begin{lemma}\label{lemma:preliminaries_asymptotic_Yt}
Let us assume that $k\geq1$, $\theta >0$. Then for any $\delta >0$, the function~$h_1(a) = \mathbb{E}_a[N_{\delta}]$ is continuously differentiable on $[0,\infty)$. In addition, there exists $C_{h_1} >0$ such that for all~$a\geq0$
\begin{equation}\label{eq:upper_bound_useful_boundedvariation}
|h'_1(a)| \leq C_{h_1}.
\end{equation}
\end{lemma}
\begin{proof}
Let us begin by proving that $h_1$ is continuously differentiable on $\mathbb{R}_+$. To do so, in view of~\eqref{eq:integral_equation} and the fact that $G$ is continuously differentiable, we only have to prove that $a\mapsto I_{\delta}(a)$ is continuously differentiable on $[0,+\infty)$. As for all $y\geq0$ we have
$$
g'(y) =  - \frac{y^{k-1}e^{-\frac{y}{\theta}}}{\Gamma(k)\theta^{k+1}} + \left(k-1\right)\frac{y^{k-2}e^{-\frac{y}{\theta}}}{\Gamma(k)\theta^k},
$$
and as the function $s\mapsto s^{k-2}$ decreases when $k\in (1,2)$, there exists $c > 0$ such that for all~$a\geq 0$, $x\in(0,\delta]$, 
\begin{equation}\label{eq:bound_derivative}
\left|g'(a+x)\right| \leq \begin{cases}
c & \text{ when }k\geq 2 \text{ or }k = 1, \\
c\left(1+x^{k-2}\right) & \text{ when }k\in(1,2).
\end{cases}
\end{equation}
The continuous differentiability of  $a\mapsto I_{\delta}(a)$
then follows from the following integrable bound, for $a\geq 0$, $x\in(0,\delta]$,  
$$
\left|\mathbb{E}_0[N_{\delta-x}] g'(a+x)\right| \leq c\left(1 + 1_{\{k \in (1,2)\}}x^{k-2}\right)\mathbb{E}_0[N_{\delta}].
$$
Let us prove now \eqref{eq:upper_bound_useful_boundedvariation}. By \eqref{eq:integral_equation}, for all $a\geq0$,
$$
\begin{aligned}
h_1'(a) &= 2 \frac{\partial}{\partial a} I_{\delta}(a)    +  \frac{\partial}{\partial a}  \frac{1-G(a+\delta)}{1- G(a)}  \\
&= 2\int_{0}^{\delta}\mathbb{E}_0[N_{\delta-x}]\left(\frac{g'(a+x)}{1-G(a)} + \frac{g(a+x)g(a)}{(1-G(a))^2}\right)dx  + \frac{(1-G(a+\delta))g(a)}{(1-G(a))^2} - \frac{g(a+\delta)}{1-G(a)}.
\end{aligned}
$$
Then, using the fact that $G$ increases, we obtain for all $a\geq 0$,
\begin{equation}\label{eq:preliminaries_Yt_firststatement_intermediatefirst}
\begin{aligned}
\left|h_1'(a)\right| &\leq 2\mathbb{E}_0[N_{\delta}]\int_{0}^{\delta}\left(\left|\frac{g'(a+x)}{1-G(a)}\right| + \frac{g(a+x)g(a)}{(1-G(a+x))(1-G(a))}\right)dx \\
& \qquad+ \frac{g(a)}{1-G(a)} + \frac{g(a+\delta)}{1-G(a+\delta)}.
\end{aligned}
\end{equation}
To continue, we need upper bounds for the two last terms.
First, by L'Hôpital's rule, 
$$
\lim_{a \longrightarrow +\infty} \frac{g(a)}{1-G(a)} = -\lim_{a \longrightarrow +\infty} \frac{g'(a)}{g(a)} = -\lim_{a \longrightarrow +\infty} (\log(g(a)))' = \frac{1}{\theta}.
$$
The latter combined with the fact that $g$ and $G$ are continuous yields 
\begin{equation}\label{eq:preliminaries_Yt_firststatement_intermediatesecond}
\sup_{s \geq 0}\frac{g(s)}{1-G(s)}< +\infty.
\end{equation}
Second, using that $G$ increases,  for all   $x\in (0,\delta]$  and $a> 1$,
\begin{equation}\label{eq:preliminaries_Yt_firststatement_intermediatesecond_bis}
\left|\frac{g'(a+x)}{1-G(a)}\right| = \left|\frac{k-1}{a+x}\frac{g(a+x)}{1 - G(a)} - \frac{g(a+x)}{\left(1 - G(a)\right)\theta}\right| \leq\left((k-1)+\frac{1}{\theta}\right)\frac{g(a+x)}{1 - G(a+x)},
\end{equation}
and  for all $a\leq 1$,
\begin{equation}\label{eq:preliminaries_Yt_firststatement_intermediatesecond_ter}
\left|\frac{g'(a+x)}{1-G(a)}\right| \leq \left|\frac{g'(a+x)}{1-G(1)}\right|.
\end{equation}
Then, combining~\eqref{eq:preliminaries_Yt_firststatement_intermediatesecond_bis} with~\eqref{eq:preliminaries_Yt_firststatement_intermediatesecond}, and combining~\eqref{eq:preliminaries_Yt_firststatement_intermediatesecond_ter} with~\eqref{eq:bound_derivative}, ensures that there exists $c' >0$ such that for all $x>0$,
$$
\sup_{s\geq0}\left|\frac{g'(s+x)}{1-G(s)}\right| \leq c'\left(1+x^{k-2}1_{\{k\in(1,2)\}}\right).
$$
Plugging this  bound and~\eqref{eq:preliminaries_Yt_firststatement_intermediatesecond} in~\eqref{eq:preliminaries_Yt_firststatement_intermediatefirst} 
yields ~\eqref{eq:upper_bound_useful_boundedvariation}.
\end{proof}

Thus, we have the statement we need to apply the result in \cite{iksanov_asymptotic_2024}. We now introduce some notations linked to this article. We consider for all $t\geq0,\delta>0$ 
$$
\Delta_t^{\delta} = Y_t^{\delta} -
\sum_{\substack{\rho\in\mathbb{C}\backslash\{-1/\theta\}, \\ \mathcal{L} g(\rho)=\frac{1}{2},  \text{Re}(\rho)>\frac{\alpha}{2} }} e^{\rho\,t} \frac{\int_{\mathbb{R}} h_2(x)e^{-\rho\,x}dx}{2\int_0^{+\infty} xe^{-\rho x}g(x) dx} W(\rho),
$$
where $h_2$ has been defined in Lemma \ref{lemma:preliminaries_asymptotic_Yt}, and $W(\rho)$ is the limit almost surely and in $L^2$ of the martingale $(W_t(\rho))_{t\geq0}$ defined in \cite[Eq. $2.17$]{iksanov_asymptotic_2024}. We do not explicit the latter as this requires introducing too many notations. For all $\delta >0$, we can obtain the asymptotic behavior of $(\Delta_t^{\delta})_{t\geq0}$ by using \cite[Theorem $2.9$]{iksanov_asymptotic_2024}. Precisely, this theorem gives the ergodic behavior for $(\Delta_t^{\delta})_{t\geq0}$, as the cost of a renormalization that is different according to the fact that the set $\{\rho\in\mathbb{C}\backslash\{-1/\theta\} :  \mathcal{L}g(\rho)=1/2,  \text{Re}(\rho)=\alpha/2\}$ is empty or not. The second case corresponds in fact to a critical case, when the spectral gap $\alpha - \lambda = \frac{\alpha}{2}$. Let us give the asymptotic behavior of $(\Delta_t^{\delta})_{t\geq0}$ for all $\delta > 0$ in the following proposition.
\begin{proposition}\label{prop:application_Iksanov} Let $\delta>0$. Then there exists $\sigma_{Y,\delta}^2 \geq 0$ such that 
$$t^{b} e^{-\alpha t/2} \Delta_t^{\delta}\, \left(1,\frac{1}{\sqrt{W}} \right) \, 
\overset{\mathcal{L}}{\underset{t\longrightarrow+\infty}{\Longrightarrow}} \, H\left(\sqrt{\frac{W}{\beta}}, \frac{1}{\sqrt{\beta}}\right),$$ where $W$ is the random variable introduced in~\eqref{equivN}, $H \sim \mathcal{N}(0,\sigma_{Y,\delta}^2)$ is independent of $W$, $\beta = 2\int_0^{\infty}ug(u)e^{-\alpha u}du$, and
$$
b = \begin{cases}
0, & \text{if }\left\{\rho\in\mathbb{C}\backslash\{-1/\theta\} : \mathcal{L}g(\rho) = 1/2,  \text{Re}(\rho)= \alpha/2\right\}= \emptyset, \\
-1/2, & \text{otherwise.}
\end{cases}
$$
%
%
\end{proposition}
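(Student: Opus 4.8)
The plan is to recognize $Y_t^{\delta}$ as a Crump-Mode-Jagers branching process counted with a random characteristic, and then feed it directly into \cite[Theorem $2.8$]{iksanov_asymptotic_2023}. Writing $A_t^i$ for the age at time $t$ of the $i$-th individual alive, the identity $\mathbb{E}(N_{t,\delta}^i\mid\mathcal{F}_t)=\mathbb{E}_{A_t^i}[N_{\delta}]$ from the branching property gives
$$
Y_t^{\delta}=\sum_{i=1}^{N_t}\left(\mathbb{E}_{A_t^i}[N_{\delta}]-e^{\alpha\delta}\right),
$$
so $Y_t^{\delta}$ is exactly the process counted with the random characteristic $\phi^{(\delta)}(a)=\left(\mathbb{E}_a[N_{\delta}]-e^{\alpha\delta}\right)\mathbf{1}_{\{a<\zeta\}}$, whose mean is $\mathbb{E}[\phi^{(\delta)}(a)]=\left(\mathbb{E}_a[N_{\delta}]-e^{\alpha\delta}\right)(1-G(a))=h_2(a)$, precisely the function controlled in Lemma \ref{lemma:preliminaries_asymptotic_Yt}. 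Since reproduction is binary, the Malthusian equation is $2\,\mathcal{L}g(\rho)=1$, i.e. $\mathcal{L}g(\rho)=1/2$, whose real root is $\alpha$ and whose conjugate pairs of complex roots produce the leading oscillatory terms; the roots with $\mathrm{Re}(\rho)>\alpha/2$ are those whose contributions, together with the martingale limits $W(\rho)$ supplied by the same theorem, are subtracted off in the definition of $\Delta_t^{\delta}$.

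With this dictionary in place, I would verify the hypotheses of \cite[Theorem $2.8$]{iksanov_asymptotic_2023}, which is where the preparatory lemmas are used. The regularity and the explicit derivative bound \eqref{eq:upper_bound_useful_boundedvariation} for $h_1$, the bounded-variation estimate on $h_2$ from Lemma \ref{lemma:preliminaries_asymptotic_Yt}$\,ii)$, and the direct Riemann integrability of $h_3$ from Lemma \ref{lemma:preliminaries_asymptotic_Yt}$\,iii)$, combined with the uniform moment bounds of Lemma \ref{lemma:preliminaries_asymptotic_Xt}, are exactly the smoothness and integrability conditions the theorem demands of the mean characteristic and of the variance kernel. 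Once these are checked, the theorem applies: after removing the finitely many contributions with $\mathrm{Re}(\rho)>\alpha/2$, the residual $\Delta_t^{\delta}$ obeys a central limit theorem at the diffusive scale $e^{\alpha t/2}$. The dichotomy in $b$ is read off directly: when no root of $\mathcal{L}g(\rho)=1/2$ lies exactly on the critical line $\mathrm{Re}(\rho)=\alpha/2$ one gets the clean $e^{\alpha t/2}$ normalization ($b=0$), whereas a root on that line, the critical case $\lambda=\alpha/2$, forces the extra $\sqrt{t}$ factor ($b=-1/2$).

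It then remains to read off the form of the limit. The theorem yields a limit of the shape $\sqrt{W}\,\mathcal{N}(0,\sigma_{Y,\delta}^2/\beta)$, where $W$ is the Malthusian martingale limit of \eqref{equivN}, the Gaussian is independent of $W$, and $\beta=2\int_0^{\infty}ug(u)e^{-\alpha u}\,du$ is the normalizing constant coming from the derivative of the characteristic equation at $\alpha$; the variance $\sigma_{Y,\delta}^2$ is provided by the theorem's explicit formula as an integral built from the kernel $h_3$. I state the conclusion jointly against $(1,1/\sqrt{W})$ precisely so that the second coordinate $t^{b}e^{-\alpha t/2}\Delta_t^{\delta}/\sqrt{W}$ isolates the Gaussian part as genuinely independent of $W$; this independence is what will later permit the asymptotically independent superposition of $X_t^{\delta}$ and $Y_t^{\delta}$ in the proof of Theorem \ref{thm:main_result}.

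The genuine difficulty is not the convergence itself, which is outsourced to \cite{iksanov_asymptotic_2023}, but the faithful translation of our concrete Gamma/binary-splitting model into the abstract framework of that paper, together with checking that every one of its hypotheses holds \emph{uniformly}. The delicate point sits at small ages: for $k$ close to $1$ the density $g$ and its derivative are singular at $0$, which is exactly why the bounds in Lemma \ref{lemma:preliminaries_asymptotic_Yt} carry the $a^{k-2}\mathbf{1}_{\{a<1\}}$ and $\min(a,1)^{k-1}$ corrections, and one must confirm these singularities remain integrable against the weights appearing in the theorem's conditions. A second point needing care is locating the roots of $\mathcal{L}g(\rho)=1/2$ and confirming that the critical-line condition defining $b$ coincides with the threshold $2\cos(2\pi/k)=2^{-1/k}+1$ of Theorem \ref{thm:main_result}; this root analysis is what ties the value of $b$ to the regime classification.
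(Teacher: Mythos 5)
Your proposal follows essentially the same route as the paper: recognize $Y_t^{\delta}$ as a Crump--Mode--Jagers process counted with the random characteristic $\varphi(x)=\left(\mathbb{E}_x[N_{\delta}]-e^{\alpha\delta}\right)1_{[0,\zeta[}(x)$, verify the hypotheses of \cite[Theorem $2.8$]{iksanov_asymptotic_2023} through Lemmas \ref{lemma:preliminaries_asymptotic_Xt} and \ref{lemma:preliminaries_asymptotic_Yt} (direct Riemann integrability of $h_3$ for the variance condition, the total-variation bound on $h_2$ for condition $(2.19)$, boundedness of the characteristic from the uniform moment bounds), and read off the normalization $t^{b}e^{-\alpha t/2}$ according to whether roots of $\mathcal{L}g(\rho)=1/2$ sit on the critical line $\mathrm{Re}(\rho)=\alpha/2$. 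This is exactly the paper's argument, including the identification of $\mathbb{E}[\varphi(a)]=h_2(a)$.

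There is, however, one step you assert without justification, and it is precisely the part of the statement that goes beyond a direct citation: the \emph{bivariate} convergence against $\left(1,\frac{1}{\sqrt{W}}\right)$. Theorem $2.8$ of \cite{iksanov_asymptotic_2023}, read as plain convergence in law, gives $t^{b}e^{-\alpha t/2}\Delta_t^{\delta}\Rightarrow \sqrt{W/\beta}\,\mathcal{N}(0,\sigma_{Y,\delta}^2)$; but convergence in law alone does not allow you to divide by $\sqrt{W}$ and conclude that the second coordinate converges (multiplication by a fixed, non-degenerate random variable is not continuous for convergence in distribution unless one has joint convergence with that variable). The paper closes this gap by invoking two extra facts: the convergence in \cite[Theorem $2.8$]{iksanov_asymptotic_2023} is \emph{stable} (their Remark $2.13$), which upgrades it to joint convergence with any $\mathcal{F}$-measurable random variable such as $W$, and $W>0$ almost surely (by \cite[Theo.~$2$, $p.172$]{athreya_convergence_1976}, using that extinction is impossible here), so that $1/\sqrt{W}$ is a.s.\ finite. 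Without the stability argument your final display is a claim, not a consequence of what precedes it; with it, your proof matches the paper's.
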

\begin{proof} Let $\delta >0$. Bellman-Harris processes are a special class of Crump-Mode-Jagers processes where the reproduction happens only at the end of the life of each individual. We can thus apply Theorem $2.9$ in \cite{iksanov_asymptotic_2024}, 
which deals with Crump-Mode-Jagers processes with random characteristic, by checking the assumptions of this result, namely $(A1-A3)$, $(A5-A6)$ and $(2.19)$ (the density of the intensity measure with respect to the Lebesgue measure is immediate). In our case, the reproduction point process of the Crump-Mode-Jagers process is $\xi = \sum_{j = 1}^2 \delta_{\zeta}$, and the  random characteristic is for all~$x\in\mathbb{R}$
\begin{equation}\label{eq:random_characteristic}
\varphi(x) = \left(\mathbb{E}_x\left[N_{\delta}\right] - e^{\alpha\delta}\right)1_{[0,\zeta[}(x),
\end{equation}
where $\zeta \sim \Gamma(k,\theta).$ 
As $\mathcal{L}g(\alpha) = \frac{1}{2}$ and $-\left(\mathcal{L}g\right)'(\alpha) = \int_0^{+\infty}xe^{-\alpha x} g(x)dx > 0$, Assumption~$(A1)$ of \cite{iksanov_asymptotic_2024} is easily satisfied.
In addition, the reproduction law is reduced to binary division, so Assumptions $(A2)$ and $(A3)$ of \cite{iksanov_asymptotic_2024} are directly satisfied (see Remark $2.1$ of~\cite{iksanov_asymptotic_2024}). By Lemma~\ref{lemma:preliminaries_asymptotic_Xt}~$i)$, we also have that for all $x\in \R$,
$$
\left|\varphi(x)\right| \leq \left(\sup_{s\geq0}\mathbb{E}_s\left[N_{\delta}\right] + e^{\alpha\delta}\right)< +\infty,
$$
implying that the family $\varphi^2$ is locally bounded, and that $(A6)$ is verified. It thus remains to check $(A5)$ and $(2.19)$, which requires more computations, and the assumptions of \cite[Theorem $2.9$]{iksanov_asymptotic_2024} will be satisfied.\\
To check~$(A5)$, we need to prove that the function $h_2 : x \mapsto e^{-\alpha x}\Var(\varphi(x)) $ is directly Riemann-integrable on $\mathbb{R}$, i.e. that 
\begin{equation}\label{eq:directly_riemann_integrable}
\sum_{n \in \mathbb{Z}} \underset{y\in(n\epsilon,(n+1)\epsilon]}{\sup} h_2(y) < +\infty, \quad \text{ and } \quad 
\epsilon\sum_{n \in \mathbb{Z}} \big\{ \underset{(n\epsilon,(n+1)\epsilon]}{\sup} h_2 - 
\underset{(n\epsilon,(n+1)\epsilon]}{\inf} h_2 \big\} \underset{\epsilon \longrightarrow 0}{\longrightarrow} 0.
\end{equation}
By~\eqref{eq:random_characteristic} and Lemma \ref{lemma:preliminaries_asymptotic_Xt} $i)$, we easily have that there exists a constant $K > 0$ such that for all $a\geq 0$,
$$
h_2(a) = e^{-\alpha a}\left((1 - G(a)) - (1 - G(a))^2\right)\left(\mathbb{E}_a\left[N_{\delta}\right] - e^{\alpha\delta}\right)^2 \leq Ke^{-\alpha a}.
$$
In addition, by \cite[Prop. $4.1.$V,$p.154$]{asmussen_applied_2003}, the function $a\mapsto Ke^{-\alpha a}$ is directly Riemann integrable as this is a non-increasing Lebesgue integrable function. Then, $h_2$ is directly Riemann integrable on $[0,+\infty)$ by \cite[Prop. $4.1.$IV,$p.154$]{asmussen_applied_2003}, because it is a bounded continuous function (Lemma \ref{lemma:preliminaries_asymptotic_Xt}) dominated by $a\mapsto Ke^{-\alpha a}$. As $h_2$ is null on $(-\infty,0)$, this implies that~\eqref{eq:directly_riemann_integrable} holds. \\
Let us denote $h_3(a) = \mathbb{E}\left[\varphi(a)\right] = \left(\mathbb{E}_{a}\left[N_{\delta}\right] - e^{\alpha\delta}\right)\left( 1 - G(a)\right)1_{\{a\geq 0 \}}$ for all $a\in\mathbb{R}$. To check~Equation $(2.19)$ in \cite{iksanov_asymptotic_2024}, we need to prove that for some $c < \frac{\alpha}{2}$
\begin{equation}\label{eq:integral_totvar}
\int_{\mathbb{R}}\totvar h_3(x)\left(e^{-cx} + e^{-\alpha x}\right)d x < +\infty,
\end{equation}
where 
$$
\totvar h_3(x) = \sup\left\{\sum_{j = 1}^n |h_3(x_j) - h_3(x_{j-1})|\,\big|\,-\infty < x_0 < x_1 < \hdots < x_n \leq x,\,n\in\mathbb{N}\right\}.
$$
The function $h_3$ is continuously differentiable on $(0,+\infty)$ by Lemma~\ref{lemma:preliminaries_asymptotic_Yt}, and with all derivatives equal to zero on $(-\infty,0)$. Moreover, we easily have that $|h_3(0^-) - h_3(0)| = h_3(0)$, and by continuity $\left|h_3(0^+) - h_3(0)\right| = 0$ (see Lemma \ref{lemma:preliminaries_asymptotic_Xt} $ii)$). Therefore, using the expression of the total variation for a piecewise continuously differentiable function yields for all $a\in\mathbb{R}$
\begin{equation}\label{eq:preliminaries_Yt_firststatement_intermediatefourth}
\begin{aligned}
\totvar h_3 (a) = \begin{cases}
0, &\text{ for }a < 0, \\
|h_3(0)| + \int_0^a \left|h'_3(s)\right| ds, &\text{ for }a \geq  0.
\end{cases} 
\end{aligned}
\end{equation}
For all $s> 0$, we have using a triangular inequality, and then Lemmas \ref{prop:asymptotic_Xt} and \ref{lemma:preliminaries_asymptotic_Yt}, that
$$
\begin{aligned}
|h'_3(s)| &= \left|h'_1(s)(1-G(s)) -g(s)\left(\mathbb{E}_{s}\left[N_{\delta}\right] - e^{\alpha\delta}\right)\right| \\
&\leq C_{h_1}\left(1-G(s)\right) + g(s)\left(\sup_{y\geq 0}\left(\mathbb{E}_{y}\left[N_{\delta}\right]\right) + e^{\alpha\delta}\right). 
\end{aligned}
$$
Then, plugging this in \eqref{eq:preliminaries_Yt_firststatement_intermediatefourth}, and using that $$\int_0^{+\infty} (1-G(s)) ds = \int_0^{+\infty} u g(u) du < \infty\text{ and }\int_0^{+\infty} g(s) ds = 1,$$ yields that $\totvar h_3$ is bounded on $\mathbb{R}_+$, and that~\eqref{eq:integral_totvar} holds. \\
We have now checked the assumptions of 
\cite[Theorem $2.9$]{iksanov_asymptotic_2024}. Notice that for all $p\in\mathbb{C}$ such that $Re(p)  \neq \frac{1}{\theta}$, we have  
\begin{equation}\label{eq:expression_derivative_laplace}
(\mathcal{L}g)'(p) = -\frac{k\theta}{(1+p\theta)^{k+1}} \neq 0.
\end{equation}
Then, every root of $p \mapsto \mathcal{L}g(p)-\frac{1}{2}$ is a root of multiplicity $1$, and $t^{b} e^{-\alpha t/2} \Delta_t^{\delta}$ corresponds exactly to the left-hand side term of~\cite[Theorem $2.9$ $i)$]{iksanov_asymptotic_2024}. We thus get from this theorem the convergence in law of $t^{b} e^{-\alpha t/2} \Delta_t^{\delta}$. 
Adding that 
this convergence is stable by~\hbox{\cite[Remark~$2.14$]{iksanov_asymptotic_2024}}, and that $W>0$ a.s. by \cite[Theo.~$2$~,~$p.172$]{athreya_convergence_1976}, ends the proof of the convergence  in law of the bivariate random variable. 
\end{proof}
\noindent 
The previous proposition  shows that the asymptotic behavior of $(Y_t^{\delta})_{t\geq0}$ depends on  the set $\{\rho\in\mathbb{C}\backslash\{-1/\theta\} : \mathcal{L}g(\rho)=1/2\}$. In our case, 
with  Gamma distribution of lifespan, this set is explicit. Indeed,  for all $p\in\mathbb{C}$ such that $p \neq -\frac{1}{\theta}$, we have 
\begin{equation}\label{eq:laplace_transform_gamma}
\mathcal{L}g(p)=\frac{1}{(1+p \theta)^k}.
\end{equation}
We can thus solve the equation verified by the elements of this set, in view of~\eqref{eq:power_complex_number} and the fact that $\arg(\rho)\in(-\pi,\pi]$ for all $\rho\in\mathbb{C}$, and obtain 
\begin{equation}\label{eq:expression_eigenvalues_gamma}
\begin{aligned}
\left\{\rho\in\mathbb{C}\backslash\{-1/\theta\} : \mathcal{L}g(\rho) = \frac{1}{2} \right\} &=  \left\{\rho\in\mathbb{C}\backslash\{-1/\theta\} : \left|1+\rho \theta\right|^k = 2,\,k\arg(\rho) \in 2\pi \mathbb{Z} \right\} \\ 
&= \left\{\frac{2^{\frac{1}{k}}\exp\left(\frac{2\pi\,l}{k}i\right) - 1}{\theta}\,|\,l\in\left\llbracket - \left\lceil\frac{k}{2}\right\rceil + 1, \left\lfloor \frac{k}{2} \right\rfloor\right\rrbracket\right\}.
\end{aligned}
\end{equation}
Recalling the expressions of $\alpha, \lambda, \tau$ from \eqref{expcst}, we derive 
that when $k\geq 2$,
$$\alpha,\,\lambda \pm i\tau \in \, \left\{\rho\in\mathbb{C}\backslash\{-1/\theta\} : \mathcal{L}g(\rho) = 1/2\right\},$$
and when $k\in [1,2)$
\begin{equation}\label{eq:set_eigenvalues_[1,2)}
\left\{\rho\in\mathbb{C}\backslash\{-1/\theta\} : \mathcal{L}g(\rho) = 1/2\right\} = \{\alpha\}. 
\end{equation}
We can now classify  the convergence of $(Y_t^{\delta})_{t\geq0}$ and explain the cases of Theorem \ref{thm:main_result}. 
\begin{proposition}\label{prop:asymptotic_Yt}
Let $\delta >0$. The following result of convergence holds.
\begin{enumerate}[$i)$]
\item If $k < k_c$, then
$$
\frac{Y_{t}^{\delta}}{\sqrt{N_t}} \overset{\mathcal{L}}{\underset{t\longrightarrow+\infty}{\Longrightarrow}} \mathcal{N}\left(0, 2\alpha\sigma_{Y,\delta}^2 \right),
$$
where 
$$
\sigma_{Y,\delta}^2 = \int_{\mathbb{R}_+} \emph{Var}\left(\left(\mathbb{E}_x\left[N_{\delta}\right] - e^{\alpha\delta}\right)1_{[0,\zeta[}(x) + 2 h^{(\delta)}(x - \zeta)1_{[\zeta, +\infty[}(x)\right) e^{-\alpha\,x}dx < +\infty.
$$
\item If $k = k_c$, then
$$
\frac{Y_{t}^{\delta}}{\sqrt{tN_t}} \overset{\mathcal{L}}{\underset{t\longrightarrow+\infty}{\Longrightarrow}} \mathcal{N}\left(0, 2\alpha\sigma_{Y,\delta}^2 \right),
$$
where
$$
\sigma_{Y,\delta}^2 =  \frac{1}{2k^2}\frac{2^{\frac{2}{k}}}{2^{\frac{2}{k}}-2^{\frac{1}{k}}}\left|e^{(\lambda+i\tau)\delta} - e^{\alpha\delta}\right|^2.
$$
\item If $k > k_c$, then
$$
\left| \frac{Y_t^{\delta}}{\exp\left(\lambda t\right)} -  2\left|M_{\delta} \right| \cos\left[\tau t +  \arg\left(M_{\delta}\right)\right]\right|  \overset{\mathbb{P}}{\underset{t\longrightarrow+\infty}{\longrightarrow}} 0,
$$
where  
$$M_{\delta}= \left(e^{(\lambda + i\tau)\delta}-e^{\alpha \delta}\right)M,$$
and $M$ is complex random variable that does not depend on $\delta$.
\end{enumerate}
\end{proposition}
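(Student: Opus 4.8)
The plan is to read off the three regimes from the decomposition
$$Y_t^{\delta} = \Delta_t^{\delta} + \sum_{\substack{\rho\in\mathbb{C}\backslash\{-1/\theta\}, \\ \mathcal{L}g(\rho)=\frac{1}{2},\ \text{Re}(\rho)>\frac{\alpha}{2}}} e^{\rho t}\,c(\rho)\,W(\rho), \qquad c(\rho):=\frac{\int_{\mathbb{R}} h_2(x)e^{-\rho x}dx}{2\int_0^{+\infty} xe^{-\rho x}g(x)\,dx},$$
which is just the definition of $\Delta_t^{\delta}$ rearranged. By \eqref{eq:expression_eigenvalues_gamma} the eigenvalues are $\rho_l=(2^{1/k}e^{2\pi i l/k}-1)/\theta$, so among $\rho\neq\alpha$ the largest real part is attained exactly at $l=\pm1$ (cosine decreases on $[0,\pi]$), giving $\text{Re}(\rho_{\pm1})=\lambda$ and $\rho_{\pm1}=\lambda\pm i\tau$ by \eqref{expcst}. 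Hence the only eigenvalue with real part strictly above $\alpha/2$ besides possibly $\lambda\pm i\tau$ is $\alpha$ itself; but its coefficient vanishes, $c(\alpha)=0$, because the centering $-e^{\alpha\delta}$ in $h_2$ is precisely the stationary identity $\int_0^\infty \mathbb{E}_a[N_\delta]p(a)\,da=e^{\alpha\delta}$, i.e. $\int_0^\infty h_2(x)e^{-\alpha x}dx=0$. Thus $\alpha$ never contributes to the sum, and everything is governed by the position of $\lambda$ relative to $\alpha/2$, which is exactly the trichotomy given by comparing $2\cos(2\pi/k)$ with $2^{-1/k}+1$.

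For the Gaussian cases $i)$ and $ii)$ one has $\lambda\le\alpha/2$, so no eigenvalue lies strictly to the right of $\alpha/2$ and the sum above is empty: $Y_t^{\delta}=\Delta_t^{\delta}$. I then invoke Proposition \ref{prop:application_Iksanov} with $b=0$ when $\lambda<\alpha/2$ (empty critical set) and with $b=-1/2$ when $\lambda=\alpha/2$ (critical set $\{\lambda\pm i\tau\}$). Reading the second coordinate of the joint limit gives $t^{b}e^{-\alpha t/2}\Delta_t^{\delta}/\sqrt{W}\Rightarrow\mathcal{N}(0,\sigma_{Y,\delta}^2/\beta)$, a Gaussian with \emph{deterministic} variance. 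Writing $\sqrt{N_t}\sim\sqrt{n_1}\,e^{\alpha t/2}\sqrt{W}$ from \eqref{equivN}, the almost sure ratio $\sqrt{n_1}e^{\alpha t/2}\sqrt W/\sqrt{N_t}\to1$ combines with this (Slutsky, using the stable convergence noted in Proposition \ref{prop:application_Iksanov}) so that the random factor $\sqrt W$ cancels and $Y_t^{\delta}/\sqrt{N_t}$ (resp. $Y_t^{\delta}/\sqrt{tN_t}$) converges to $\mathcal{N}(0,\sigma_{Y,\delta}^2/(n_1\beta))$. The constant is pinned down by the renewal identity $n_1=\big(\int_0^\infty e^{-\alpha s}(1-G(s))\,ds\big)/\beta$ together with $\int_0^\infty e^{-\alpha s}(1-G(s))\,ds=(1-\mathcal{L}g(\alpha))/\alpha=1/(2\alpha)$, whence $n_1\beta=1/(2\alpha)$ and the announced factor $2\alpha$ appears. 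In case $ii)$ the explicit closed form for $\sigma_{Y,\delta}^2$ follows by evaluating Iksanov's variance at the critical pair $\lambda\pm i\tau$, using $(\mathcal{L}g)'(\rho)=-k\theta(1+\rho\theta)^{-(k+1)}$ from \eqref{eq:expression_derivative_laplace} and $1+\rho\theta=2^{1/k}e^{2\pi i/k}$, which turns $|2(\mathcal{L}g)'(\lambda+i\tau)|^2$ into $k^2\theta^2\,2^{-2/k}$ and produces the factor $2^{2/k}/k^2$.

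For the oscillatory case $iii)$, $\lambda>\alpha/2$, so $\lambda\pm i\tau$ are the unique pair of eigenvalues of maximal real part strictly above $\alpha/2$, while every other eigenvalue in the sum has real part in $(\alpha/2,\lambda)$. Dividing the decomposition by $e^{\lambda t}$, the residual obeys $e^{-\lambda t}\Delta_t^{\delta}=t^{-b}e^{(\alpha/2-\lambda)t}\big(t^{b}e^{-\alpha t/2}\Delta_t^{\delta}\big)\to0$ in probability, since $\alpha/2-\lambda<0$ and $t^{b}e^{-\alpha t/2}\Delta_t^{\delta}$ is tight by Proposition \ref{prop:application_Iksanov}; the subdominant eigenvalue terms vanish for the same reason; and the pair $\lambda\pm i\tau$ contributes $e^{i\tau t}c(\lambda+i\tau)W(\lambda+i\tau)+e^{-i\tau t}c(\lambda-i\tau)W(\lambda-i\tau)$. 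Since $g$ and $h_2$ are real, $c(\bar\rho)=\overline{c(\rho)}$ and $W(\bar\rho)=\overline{W(\rho)}$, so this equals $2\,\text{Re}\big(e^{i\tau t}M_\delta\big)=2|M_\delta|\cos(\tau t+\arg M_\delta)$ with $M_\delta=c(\lambda+i\tau)W(\lambda+i\tau)$, which is the claimed limit.

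It remains to extract the $\delta$-dependence of $M_\delta$. Inserting the integral equation \eqref{eq:integral_equation} into $\int_{\mathbb{R}} h_2(x)e^{-\rho x}dx$ and computing the resulting Laplace transforms, the eigenvalue relation $\mathcal{L}g(\rho)=1/2$ collapses the convolution and yields $\int_{\mathbb{R}} h_2(x)e^{-\rho x}dx=(e^{\rho\delta}-e^{\alpha\delta})\,\kappa(\rho)$ with $\kappa(\rho)$ independent of $\delta$; hence $M_\delta=(e^{(\lambda+i\tau)\delta}-e^{\alpha\delta})M$ with $M:=\kappa(\lambda+i\tau)W(\lambda+i\tau)/\big(2\int_0^\infty xe^{-(\lambda+i\tau)x}g(x)\,dx\big)$ free of $\delta$. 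I expect this Laplace-transform factorization, together with the explicit evaluation of the critical variance in $ii)$, to be the main computational obstacle; the regime split and the cancellation of $W$ against $\sqrt{N_t}$ are conceptually the engine but are routine once Proposition \ref{prop:application_Iksanov} and \eqref{eq:expression_eigenvalues_gamma} are in hand.
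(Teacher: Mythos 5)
Your proof is correct, and at the structural level it coincides with the paper's: the same rearrangement of $Y_t^{\delta}$ through $\Delta_t^{\delta}$, the same trichotomy read off the position of $\lambda$ relative to $\alpha/2$ via \eqref{eq:expression_eigenvalues_gamma}, the same application of Proposition~\ref{prop:application_Iksanov} followed by Slutsky's lemma with $\sqrt{N_t}\sim\sqrt{n_1W}\,e^{\alpha t/2}$ (so that $W$ cancels and $1/(n_1\beta)=2\alpha$ produces the stated variances), and the same conjugate-symmetry argument $c(\bar\rho)=\overline{c(\rho)}$, $W(\bar\rho)=\overline{W(\rho)}$ in case $iii)$. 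Where you genuinely depart from the paper is in how you prove the coefficient identities $\int_0^{\infty}h_2(x)e^{-\alpha x}dx=0$ and $\int_{\mathbb{R}}h_2(x)e^{-\rho x}dx=(e^{\rho\delta}-e^{\alpha\delta})/(2\rho)$ at the eigenvalues, i.e. \eqref{eq:coefficient_expectation_equality_consequence} and \eqref{eq:coefficient_oscillations_first}. The paper obtains them from \cite[Lemma~$7.6$]{iksanov_asymptotic_2023}, by writing two asymptotic expansions of $\mathbb{E}[N_{t+\delta}]$ and identifying coefficients; you obtain them by inserting the integral equation \eqref{eq:integral_equation} into the Laplace transform, and this route does work and is more self-contained. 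Indeed, with $m(t)=\mathbb{E}_0[N_t]$ one has $\mathbb{E}_a[N_{\delta}]\left(1-G(a)\right)=2\int_0^{\delta}m(\delta-x)g(a+x)dx+\left(1-G(a+\delta)\right)$; after Fubini and the substitution $u=a+x$, the transform of the first term equals $2\mathcal{L}g(\rho)\,(m*e^{\rho\,\cdot})(\delta)-2\,(g*m*e^{\rho\,\cdot})(\delta)$, which for $\mathcal{L}g(\rho)=\tfrac12$ becomes $\left((m-2\,g*m)*e^{\rho\,\cdot}\right)(\delta)=(\overline{G}*e^{\rho\,\cdot})(\delta)$ by the renewal equation $m=\overline{G}+2\,g*m$; adding the transform of the second term gives exactly $e^{\rho\delta}\int_0^{\infty}e^{-\rho x}(1-G(x))dx$, i.e. your factorization with $\kappa(\rho)=(1-\mathcal{L}g(\rho))/\rho=1/(2\rho)$. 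This replaces the paper's appeal to Lemma~$7.6$ by an elementary convolution computation, and note that it is this computation (specialized at $\rho=\alpha$) that actually proves your opening claim $c(\alpha)=0$: as written, invoking ``the stationary identity'' $\int_0^{\infty}\mathbb{E}_a[N_{\delta}]p(a)da=e^{\alpha\delta}$ is an unproved assertion, so you should present the factorization as its proof rather than as an afterthought. The only step you leave as a sketch is the closed form in case $ii)$: beyond $|2(\mathcal{L}g)'(\lambda+i\tau)|^2=k^2\theta^2 2^{-2/k}$ you still need $\Var\left(e^{-(\lambda\pm i\tau)\zeta}\right)=\mathcal{L}g(2\lambda)-\left|\mathcal{L}g(\lambda\pm i\tau)\right|^2=\tfrac12-\tfrac14=\tfrac14$ (using $2\lambda=\alpha$), the factor $|\lambda\pm i\tau|^{-2}$ coming from the denominator $2\rho\int_0^{\infty}xe^{-\rho x}g(x)dx$, and the critical equality in the form $|2^{1/k}e^{2\pi i/k}-1|^2=2^{2/k}-2^{1/k}$; these combine to give the prefactor $\tfrac{1}{2k^2}$ and the denominator $2^{2/k}-2^{1/k}$, exactly as in the paper's evaluation of \cite[Theorem~$2.14$]{iksanov_asymptotic_2023}.
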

\begin{proof}

We prove $i)$. First, by \eqref{eq:expression_eigenvalues_gamma}, we notice that when $k\geq 2$,   
    $$\lambda = \max\left\{Re(\rho) : \rho \in \mathbb C\backslash\{-1/\theta,\alpha\}, \, \mathcal{L}g(\rho) = 1/2\right\}.$$
Then, the latter and \eqref{eq:set_eigenvalues_[1,2)} imply that in the case where $k  < k_c$, which is equivalent to $\lambda <\alpha/2$ or $k\in[1,2)$, we have
\begin{equation}\label{eq:info_eigenvalues_first_case}
\begin{aligned}
\left\{\rho\in\mathbb{C}\backslash\{-1/\theta\} : \mathcal{L}g(\rho) = \frac{1}{2}, Re(\rho) \geq \frac{\alpha}{2} \right\} &= \{\alpha\}.
\end{aligned}
\end{equation}
Second, following the proof of Proposition~\ref{prop:application_Iksanov} allows to check the Assumptions of \cite[Lemma~$7.6$]{iksanov_asymptotic_2024} for the random characteristics $\varphi_1(x) = 1_{[0,\zeta[}(x)$ and $\varphi_2(x) = \mathbb{E}_x[N_{\delta}]1_{[0,\zeta[}(x)$, where $\zeta \sim \Gamma(k,\theta)$. This result  ensures that there exists $c \in(0,\frac{\alpha}{2})$ such that
$$
\mathbb{E}\left[N_{t+\delta}\right] = \sum_{\substack{\rho\in\mathbb{C}\backslash\{-1/\theta\}, \\ \mathcal{L} g(\rho)=\frac{1}{2},  \text{Re}(\rho)\geq \frac{\alpha}{2} }} \frac{\int_{0}^{+\infty} e^{-\rho\,x}(1 - G(x)) dx}{2\int_{0}^{+\infty} xe^{-\rho\,x}g(x) dx}e^{\rho(t+\delta)} + O\left(e^{c\,t}\right), 
$$
and
$$
\begin{aligned}
\mathbb{E}\left[N_{t+\delta}\right] &= \mathbb{E}\left[\sum_{j = 1}^{N_t}\mathbb{E}[N_{t,\delta}^j|\mathcal{F}_t]\right] \\
&= \sum_{\substack{\rho\in\mathbb{C}\backslash\{-1/\theta\}, \\ \mathcal{L} g(\rho)=\frac{1}{2},  \text{Re}(\rho)\geq \frac{\alpha}{2} }} \frac{\int_{0}^{+\infty} e^{-\rho\,x}\mathbb{E}_x\left[N_{\delta}\right](1 - G(x)) dx}{2\int_{0}^{+\infty} xe^{-\rho\,x}g(x) dx}e^{\rho\,t} + O\left(e^{c\,t}\right).
\end{aligned}
$$
By identifying the coefficients of these two expansions, we get
\begin{equation}\label{eq:coefficient_expectation_equality}
\begin{aligned}
\int_{0}^{+\infty} \mathbb{E}_x\left[N_{\delta}\right]e^{-\alpha\,x}(1 - G(x)) dx &= e^{\alpha\delta}\int_{0}^{+\infty} e^{-\alpha\,x}(1 - G(x)) dx,
\end{aligned}
\end{equation}
and finally
\begin{equation}\label{eq:coefficient_expectation_equality_consequence}
\int_0^{+\infty}h_2(x)e^{-\alpha x}dx = 0.
\end{equation}
Combining this equation with \eqref{eq:info_eigenvalues_first_case} implies that for all $t\geq0$, $\delta >0$,
\begin{equation}\label{eq:equality_Delta_Y}
\Delta_{t}^{\delta} = Y_{t}^{\delta}.
\end{equation}
Combining now Equations \eqref{eq:info_eigenvalues_first_case} and \eqref{eq:equality_Delta_Y}, and Proposition \ref{prop:application_Iksanov}, we get $$e^{-\alpha t/2} \, {W}^{-1/2} \, Y_t^{\delta} \, 
\overset{\mathcal{L}}{\underset{t\longrightarrow+\infty}{\Longrightarrow}} \,  \frac{1}{\sqrt{\beta}}\mathcal{N}(0,\sigma_{Y,\delta}^2).$$ We also know that 
$$\sqrt{N_t} \, {W}^{-1/2} \, e^{-\alpha t/2} \overset{a.s.}{\underset{t\longrightarrow+\infty}{\longrightarrow}} \sqrt{n_1},$$ where $n_1 = \frac{1}{2\alpha\beta}$,  see \cite[Theo. $21.1$, $p.147$]{Harris_1963}. Combining these two results of convergence through Slutsky's lemma allows to obtain the convergence. We mention that we can also obtain this convergence by applying \cite[Corollary 2]{kang_central_1999}, but this only works when the parameter $k$ of the Gamma distribution is an integer.  \\ 
To know the value of $\sigma_{Y,\delta}^2$ when $k < k_c$, we use the expression given in \cite[Theorem~$2.15$]{iksanov_asymptotic_2024}. We are in the case where $n = -1$ (as  $p \mapsto \mathcal{L}g(p)-\frac{1}{2}$ has no roots such that \hbox{$Re(p) = \alpha/2$}). Then, if we consider $\zeta \sim \Gamma(k,\theta)$, and if we denote for all $y\in\mathbb{R}$ the function \hbox{$h^{(\delta)}(y) = (\mathbb{E}\left[N_{y+\delta}\right] - e^{\alpha\delta}\mathbb{E}\left[N_y\right] )1_{[0,\infty[}(y)$}, we have by \cite[Theorem $2.15$]{iksanov_asymptotic_2024} that $\sigma_{Y,\delta}^2$ is finite, and has the following expression
\begin{equation}\label{eq:expression_variance_fluctuations_first}
\begin{aligned}
\sigma_{Y,\delta}^2 &= \int_{\mathbb{R}} \Var(\varphi(x) + 2 h^{(\delta)}(x - \zeta)1_{[\zeta, +\infty[}(x)) e^{-\alpha\,x}dx    \\
&= \int_{\mathbb{R}_+} \Var\left(\left(\mathbb{E}_x\left[N_{\delta}\right] - e^{\alpha\delta}\right)1_{[0,\zeta[}(x) + 2 h^{(\delta)}(x - \zeta)1_{[\zeta, +\infty[}(x)\right) e^{-\alpha\,x}dx.
\end{aligned}
\end{equation}

We prove now $ii)$. As $\lambda = \max\left\{Re(\rho)\,|\,\mathcal{L}g(\rho) = \frac{1}{2}\right\}$, we have by the condition of the statement and \eqref{eq:expression_eigenvalues_gamma} that \begin{equation}\label{eq:info_eigenvalues_second_case}
    \begin{aligned}
    \left\{\rho\in\mathbb{C}\backslash\{-1/\theta\} : \mathcal{L}g(\rho) = \frac{1}{2}, \, Re(\rho) > \frac{\alpha}{2} \right\} &= \{\alpha\},\\
    \left\{\rho\in\mathbb{C}\backslash\{-1/\theta\} : \mathcal{L}g(\rho) = \frac{1}{2}, \, Re(\rho) = \frac{\alpha}{2} \right\} &= \left\{\lambda - i\tau,\lambda + i\tau\right\}. 
    \end{aligned}
    \end{equation}
We prove similarly to the previous point that \eqref{eq:coefficient_expectation_equality} is satisfied, implying that for all $t\geq0$, $\delta >0$, we have $\Delta_{t}^{\delta} = Y_{t}^{\delta}$. Then, by Proposition \ref{prop:application_Iksanov} $ii)$, 
$$t^{-1/2}e^{-\alpha t/2}\left(\sqrt{W}\right)^{-1}Y_t^{\delta} \, 
\overset{\mathcal{L}}{\underset{t\longrightarrow+\infty}{\Longrightarrow}} \,  \frac{1}{\sqrt{\beta}}\mathcal{N}(0,\sigma_{Y,\delta}^2).$$
Using  \eqref{equivN} and Slutsky's lemma, 
we can switch from $W$ to $N_t$ in this convergence and get~$ii).$ \\
We now compute $\sigma_{Y,\delta}^2$ when $k = k_c$, i.e. when $2\cos\left(2\pi/k\right) = 2^{-1/k} + 1$ and $k\geq 2$. It is direct that if we consider $\zeta \sim \Gamma(k,\theta)$ we have by the expression of the variance given in~\cite[Theorem $2.15$]{iksanov_asymptotic_2024} (we are in the case where $n = 0$, as the roots of $p \mapsto \mathcal{L}g(p)-\frac{1}{2}$ such that $Re(p) = \alpha/2$ have a multiplicity $1$) 
$$
\sigma_{Y,\delta}^2=  \sum_{\substack{\rho\in\mathbb{C}\backslash\{-1/\theta\},\\ \mathcal{L}g(\rho) = \frac{1}{2},\,Re(\rho) = \frac{\alpha}{2}}}\Var\left[\left(\frac{e^{\rho\delta} - e^{\alpha\delta}}{2\rho\int_{0}^{+\infty} xe^{-\rho\,x}g(x) dx}\right) e^{-\rho\,\zeta}\right].
$$ 
As for a complex random variable $Z$ and a constant $c\in\mathbb{C}$ we have $\Var(cZ) = |c|^2\Var(Z)$, and as $\left\{\rho\in\mathbb{C}\backslash\{-1/\theta\},\,2.\mathcal{L}g(\rho) = 1,\,Re(\rho) = \frac{\alpha}{2}\right\} = \{\lambda - i\tau,\lambda + i\tau\}$, we obtain 
$$
\begin{aligned}
\sigma_{Y,\delta}^2 &= \sum_{s \in \{-1,1\}}\left|\frac{e^{(\lambda + is\tau)\delta} - e^{\alpha\delta}}{2(\lambda + is\tau)\int_{0}^{+\infty} xe^{-(\lambda + is\tau)\,x}g(x) dx}\right|^2\Var\left[e^{-(\lambda + is\tau)\zeta}\right].
\end{aligned}
$$
As $\mathbb{E}\left[e^{-\alpha\zeta\,}\right] = \mathcal{L}g(\alpha) = 1/2$, as for the same reason $\mathbb{E}\left[e^{-(\lambda + i\tau)\zeta\,}\right] = \mathbb{E}\left[e^{-(\lambda - i\tau)\zeta\,}\right] = 1/2$, and as $\alpha = 2\lambda$, we have $\Var\left[e^{-(\lambda \pm i\tau)\zeta}\right] = \mathbb{E}\left[e^{-2\lambda\zeta}\right] - \left|\mathbb{E}\left[e^{-(\lambda\pm i\tau)\zeta}\right]\right|^2 = 1/2 - 1/4 = 1/4$. Plugging this in the above equation yields
$$
\begin{aligned}
\sigma_{Y,\delta}^2 &= \frac{1}{4}\left|\frac{e^{(\lambda + i\tau)\delta} - e^{\alpha\delta}}{2(\lambda + i\tau)\int_{0}^{+\infty} xe^{-(\lambda + i\tau)\,x}g(x) dx}\right|^2 + \frac{1}{4}\left|\frac{e^{(\lambda - i\tau)\delta} - e^{\alpha\delta}}{2(\lambda - i\tau)\int_{0}^{+\infty} xe^{-(\lambda - i\tau)\,x}g(x) dx}\right|^2.
\end{aligned}
$$
Using the fact that $\int_{0}^{+\infty} xe^{-(\lambda \pm i\tau)\,x}g(x) dx = -\mathcal{L}'(g)(\lambda \pm i \lambda)$, Equation \eqref{eq:expression_derivative_laplace}, the fact that $1/[1+ (\lambda \pm i\tau)\theta]^k= \mathcal{L}g(\lambda \pm i\tau) = \frac{1}{2}$, and finally the equality $2\cos\left(\frac{2\pi}{k}\right)  =2^{-\frac{1}{k}} + 1$, we get
$$
\begin{aligned}
\sigma_{Y,\delta}^2& = \frac{1}{4}\left|\frac{1}{k}\frac{2^{\frac{1}{k}}\exp\left(i\frac{2\pi}{k}\right)}{2^{\frac{1}{k}}\exp\left(i\frac{2\pi}{k}\right)-1}\right|^2
\left[\left|\left(e^{(\lambda+i\tau)\delta} - e^{\alpha\delta}\right)\right|^2 + \left|e^{(\lambda-i\tau)\delta} - e^{\alpha\delta}\right|^2\right]\\
&= \frac{1}{2k^2}\frac{2^{\frac{2}{k}}}{2^{\frac{2}{k}}+1-2^{\frac{1}{k}+1}\cos\left(\frac{2\pi}{k}\right)}\left|e^{(\lambda+i\tau)\delta} - e^{\alpha\delta}\right|^2 \\
&= \frac{1}{2k^2}\frac{2^{\frac{2}{k}}}{2^{\frac{2}{k}}-2^{\frac{1}{k}}}\left|e^{(\lambda+i\tau)\delta} - e^{\alpha\delta}\right|^2.
\end{aligned}
$$

We finally prove $iii)$. In this case,
$$\lambda = \max\left\{Re(\rho) : \rho \in \mathbb C\backslash\{-1/\theta,\alpha\}, \, \mathcal{L}g(\rho) = 1/2\right\} \in (\alpha/2, \alpha).$$
Moreover, one can prove as in the first point that \eqref{eq:coefficient_expectation_equality} and \eqref{eq:coefficient_expectation_equality_consequence} are true.
Then, we have the following equality for all $t\geq0$, $\delta >0$,

\begin{equation}\label{eq:cor_third_case_intermediate_first}
\begin{aligned}
&\frac{Y_t^{\delta}}{\exp\left(\lambda t\right)} - \sum_{\substack{\rho\in\mathbb{C}\backslash\{-1/\theta\}, \\ \mathcal{L} g(\rho)=\frac{1}{2},  \text{Re}(\rho) = \lambda}} e^{(\rho-\lambda)t} \frac{\int_{\mathbb{R}} h_2(x)e^{-\rho\,x}dx}{2\int_0^{+\infty} xe^{-\rho x}g(x) dx} W(\rho) \\ 
&= \frac{\Delta_t^{\delta}}{\exp\left(\lambda t\right)} + \sum_{\substack{\rho\in\mathbb{C}\backslash\{-1/\theta\}, \\ \mathcal{L} g(\rho)=\frac{1}{2},  \lambda>\text{Re}(\rho)>\alpha/2 }} e^{(\rho-\lambda)t} \frac{\int_{\mathbb{R}} h_2(x)e^{-\rho\,x}dx}{2\int_0^{+\infty} xe^{-\rho x}g(x) dx} W(\rho).
\end{aligned}
\end{equation}
As $\lambda > \alpha/2$, by Proposition \ref{prop:application_Iksanov} (whatever the case) 
and Slutsky's lemma we have  
\begin{equation}\label{eq:cor_third_case_intermediate_second}
\frac{\Delta_t^{\delta}}{\exp\left(\lambda t\right)} \overset{\mathcal{L}}{\underset{t\longrightarrow+\infty}{\Longrightarrow}} 0.
\end{equation}
We also easily have that 
\begin{equation}\label{eq:cor_third_case_intermediate_third}
\sum_{\substack{\rho\in\mathbb{C}\backslash\{-1/\theta\}, \\ \mathcal{L} g(\rho)=\frac{1}{2},  \lambda>\text{Re}(\rho)>\alpha/2 }} e^{(\rho-\lambda)t} \frac{\int_{\mathbb{R}} h_2(x)e^{-\rho\,x}dx}{2\int_0^{+\infty} xe^{-\rho x}g(x) dx} W(\rho) \overset{a.s.}{\underset{t\longrightarrow+\infty}{\longrightarrow}} 0.
\end{equation}
Combining \eqref{eq:cor_third_case_intermediate_second} and \eqref{eq:cor_third_case_intermediate_third} through Slutsky's lemma yields
\begin{equation}\label{eq:cor_third_case_intermediate_fourth}
\frac{Y_t^{\delta}}{\exp\left(\lambda t\right)} - \sum_{\substack{\rho\in\mathbb{C}\backslash\{-1/\theta\}, \\ \mathcal{L} g(\rho)=\frac{1}{2},  \text{Re}(\rho) = \lambda}} e^{(\rho-\lambda)t} \frac{\int_{\mathbb{R}} h_2(x)e^{-\rho\,x}dx}{2\int_0^{+\infty} xe^{-\rho x}g(x) dx} W(\rho) \overset{\mathcal{L}}{\underset{t\longrightarrow+\infty}{\Longrightarrow}} 0.
\end{equation}
We know by \eqref{eq:expression_eigenvalues_gamma} that $\{\rho\in\mathbb{C}\backslash\{-1/\theta\} : \mathcal{L} g(\rho)=\frac{1}{2},  \text{Re}(\rho) = \lambda\} = \left\{\lambda - i\tau, \lambda + i\tau\right\}$. Proceeding as in the first point for \eqref{eq:coefficient_expectation_equality}, we can obtain that 
$$
\begin{aligned}
\int_{0}^{+\infty} \mathbb{E}_x\left[N_{\delta}\right]e^{-(\lambda \pm i \tau)x}(1 - G(x)) dx &= e^{(\lambda\pm i\tau)\delta}\int_{0}^{+\infty} e^{-(\lambda \pm i \tau)x}(1 - G(x)) dx \\
&= \frac{e^{(\lambda\pm i\tau)\delta}}{\lambda\pm i\tau}\int_{0}^{+\infty} \left(1 - e^{-(\lambda\pm i\tau)y}\right) g(y) dy = \frac{e^{(\lambda\pm i\tau)\delta}}{2(\lambda\pm i\tau)},
\end{aligned}
$$
and more generally
\begin{equation}\label{eq:coefficient_oscillations_first}
\int_{\mathbb{R}} h_2(x)e^{-(\lambda+ i\tau)x}dx = \frac{e^{(\lambda+ i\tau)\delta}-e^{\alpha\delta}}{2(\lambda+ i\tau)} = \overline{\int_{\mathbb{R}} h_2(x)e^{-(\lambda- i\tau)x}dx}.
\end{equation}
By \eqref{eq:expression_derivative_laplace}, we also have that
\begin{equation}\label{eq:coefficient_oscillations_second}
\int_0^{+\infty} xe^{-(\lambda +i\tau)x}g(x) dx = \overline{\int_0^{+\infty} xe^{-(\lambda -i\tau)x}g(x) dx} = -(\mathcal{L}g)'(\lambda + i \tau) = \frac{k\theta}{2^{1+\frac{1}{k}}\exp\left(\frac{2\pi}{k}i\right)}.
\end{equation}
\noindent As $W(\lambda+i\tau)$ and $W(\lambda-i\tau)$ are limits of conjugated martingales (see \cite[Eq. $2.17$]{iksanov_asymptotic_2024} for the expression of the martingale), we have that 
\begin{equation}\label{eq:coefficient_oscillations_third}
W(\lambda+i\tau) = \overline{W(\lambda-i\tau)}.
\end{equation}
Plugging \eqref{eq:coefficient_oscillations_first}, \eqref{eq:coefficient_oscillations_second} and \eqref{eq:coefficient_oscillations_third} in \eqref{eq:cor_third_case_intermediate_fourth} and using the equality $z+\overline{z} = 2|z|\cos(\arg(z))$ yields that the statement of the proposition is proved for $M = \frac{1}{2k}\frac{2^{\frac{1}{k}}\exp\left(\frac{2\pi}{k}i\right)}{2^{\frac{1}{k}}\exp\left(\frac{2\pi}{k}i\right) - 1} W(\lambda +i \tau).$
\end{proof}
\noindent It remains now to combine Propositions \ref{prop:asymptotic_Xt} and \ref{prop:asymptotic_Yt} to obtain the asymptotic behavior of $(R_t^{\delta})_{t\geq0}$ for all $\delta >0$.
\subsection{Proof of  Theorem \ref{thm:main_result}} 
\begin{proof}[Proof of i)]
Let $\delta >0$, $t\geq0$. As $R_{t}^{\delta}= X_{t}^{\delta} + Y_{t}^{\delta}$ and $Y_{t}^{\delta}$ is $\mathcal{F}_t-$measurable, conditioning with respect to $\mathcal{F}_t$ yields
$$
\begin{aligned}
\mathbb{E}\left[e^{isR_{t}^{\delta}/\sqrt{N_t}}\right] &= \mathbb{E}\left[\mathbb{E}\left[e^{isX_{t}^{\delta}/\sqrt{N_t}}\,\Big\vert\, \mathcal{F}_t\right]e^{isY_{t}^{\delta}/\sqrt{N_t}}\right].
\end{aligned}
$$
Then, using the triangular inequality and the fact that $\vert \exp(ix)\vert=1$ for $x\in \mathbb R$ and  $\sigma_{\delta}^2 = \sigma_{X,\delta}^2 + \sigma_{Y,\delta}^2$, we get
\begin{equation}\label{eq:main_result_first_case_intermediate}
\begin{aligned}
\left|\mathbb{E}\left[e^{isR_{t}^{\delta}/\sqrt{N_t}}\right] - e^{-\sigma_{\delta}^2s^2/2}\right| &\leq  \mathbb{E}\left[\left|\mathbb{E}\left[e^{isX_{t}^{\delta}/\sqrt{N_t}}\,\Big\vert\, \mathcal{F}_t\right]-e^{-\sigma_{X,\delta}^2s^2/2}\right|\right] \\ 
&\qquad + e^{-\sigma_{X,\delta}^2s^2/2} \left|\mathbb{E}\left[e^{isY_{t}^{\delta}/\sqrt{N_t}}\right]-e^{-\sigma_{Y,\delta}^2s^2/2}\right|.
\end{aligned}
\end{equation}
By Proposition \ref{prop:asymptotic_Xt} and the dominated convergence theorem, we have
$$
\mathbb{E}\left[\left|\mathbb{E}\left[e^{isX_{t}^{\delta}/\sqrt{N_t}}\,\Big\vert\, \mathcal{F}_t\right]-e^{-\sigma_{X,\delta}^2s^2/2}\right|\right] \underset{t\longrightarrow+\infty}{\longrightarrow} 0.
$$
By Proposition \ref{prop:asymptotic_Yt} and the Levy's theorem, we also have
$$
\left|\mathbb{E}\left[e^{isY_{t}^{\delta}/\sqrt{N_t}}\right]-e^{-\sigma_{Y,\delta}^2s^2/2}\right| \underset{t\longrightarrow+\infty}{\longrightarrow} 0.
$$
Plugging these two results of convergence in \eqref{eq:main_result_first_case_intermediate} implies that $i)$ is proved.
\end{proof}
\begin{proof}[Proof of ii)] By Proposition \ref{prop:asymptotic_Xt} and Slutsky's lemma, 
$X_{t}^{\delta}/\sqrt{tN_t}$
converges to $0$ in law and thus in probability, as
$t$ tends to infinity.
By Proposition \ref{prop:asymptotic_Yt},
$Y_{t}^{\delta}/\sqrt{tN_t}$
converges in law to a centered Gaussian variable
with variance $2\alpha\sigma_{Y,\delta}^2$.
Adding these two  convergences with Slutsky's lemma ends the proof of $ii)$.
\end{proof}
\begin{proof}[Proof of iii)] 
In the case $iii)$,
$\lambda > \alpha/2$ and Proposition \ref{prop:asymptotic_Xt}
ensures that
$X_{t}^{\delta}\exp\left(-\lambda t\right)$
converges to $0$ 
in law and thus in probability, as $t$ tends to infinity.
Besides, Proposition \ref{prop:asymptotic_Yt} $iii)$ ensures that
$Y_{t}^{\delta}\exp\left(-\lambda t\right)$ converges in probability to
$2\left|M_{\delta} \right| \cos\left[\tau t +  \arg\left(M_{\delta}\right)\right]$.
Adding these two limits yields the result.
\end{proof}
\section{Estimation of the parameters from simulations}\label{sect:simus}
\subsection{Framework and estimation protocol}\label{subsect:framework_protocol}
Let us use the results
above on   the asymptotic behavior of $(R_{t}^{\delta})_{t\geq0}$  and propose an efficient way to infer the parameters $(k,\theta)$ of the lifespan.  We assume 
that our data set consists in the observation of $n_{\text{data}}$ independent realizations of a Bellman-Harris with time distribution $\Gamma(k,\theta)$ at fixed instants of measures. We may forget the first times, when the population is small, since we rely on asymptotic analysis and are motivated by this framework. We may also discuss on the role of the times and choose the relevant set of times to exploit.  The corresponding data set we use is then  denoted by 
\begin{equation}\label{eq:dataset_simus}
(N_{i\Delta}^{(j)} : i\leq I,j\in\llbracket 1, n_{\text{data}}\rrbracket),
\end{equation}
where $\Delta >0$, $I \in \mathbb{N}^*$. 

We also need a numerical approximation of $\sigma_{\delta}^2$ introduced in Theorem \ref{thm:main_result} $i)$. The latter depends on the parameters $(k,\theta)$ of the lifetime distribution. Using the relation $\alpha~=~(2^{1/k}-1)/\theta$, we can also say that $\sigma_{\delta}^2$ depends on the values of $(k,\alpha)$. We prefer this viewpoint, as this is more relevant for our inference method. Thus, from now on, $\sigma_{\delta}^2$ refers to the function

\begin{equation}\label{eq:variance_residuals}
\sigma_{\delta}^2 : (k,\alpha)  \in [1,k_c)\times\mathbb{R}_+^*\mapsto \sigma_{\delta}(k,\alpha).
\end{equation}
The domain of definition of $\sigma_{\delta}^2$ in the variable $k$ is $[1,k_c)$, as it is the range of parameters in which Theorem \ref{thm:main_result} $i)$ is valid.

We denote $(k,\alpha) \mapsto \overline{\sigma}_{\delta}^2(k,\alpha)$ the approximation of $\sigma_{\delta}^2$ we use here to do our inference. Let us explain how we compute it when $\alpha$ is fixed. We consider a grid of parameters
$$
\mathbb{G}_p = \left\{1 + pl\,|\, l \in \llbracket0,\lfloor (k_c - 1)/p\rfloor -1\rrbracket\right\},
$$
where $p >0$, that covers the domain of $\sigma_{\delta}^2$ in the variable $k$, see~\eqref{eq:variance_residuals}. Then, for any~$k\in \mathbb{G}_p$, we proceed by Monte Carlo simulations to approximate all the expectations and variances that compose $\sigma_{\delta}^2(k,\alpha)$, except those in function $h^{(\delta)}$. We refer to Section \ref{subsect:approximation_mean_Bellman-Harris} to see how we approximate the latter. This gives us at the end an approximation $\overline{\sigma}_{\delta}^2(k,\alpha)$ of $\sigma_{\delta}^2(k,\alpha)$ for all $k\in\mathbb{G}_p$, that we extend to $k\geq 1$ using interpolation techniques. 

To verify that this method works, we plot in Figure \ref{fig:approximation_residuals} the evolution of $R_{t,\delta}/{\sqrt{N_t}}$, for different sets of parameters, and see if the latter converges to a value close to $\overline{\sigma}_{\delta}^2$. We observe that this is the case. Thus, this approximation method seems good.


\begin{figure}[!ht]
    \centering
    \begin{subfigure}[t]{0.4\textwidth}
        \centering
        \includegraphics[width=\textwidth]{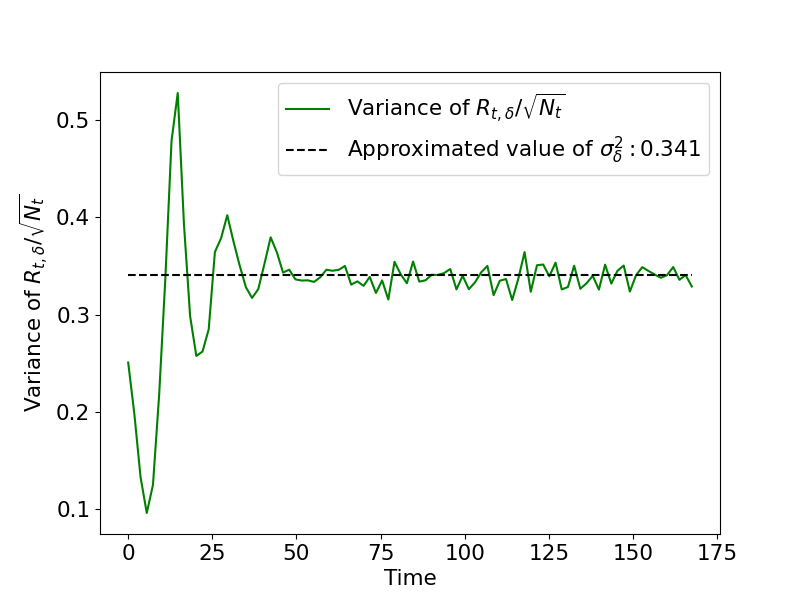}
        \caption{$(k,\theta) = (18.75,0.8)$.}
    \end{subfigure}
    \hfill
    \begin{subfigure}[t]{0.4\textwidth}
        \centering
        \includegraphics[width=\textwidth]{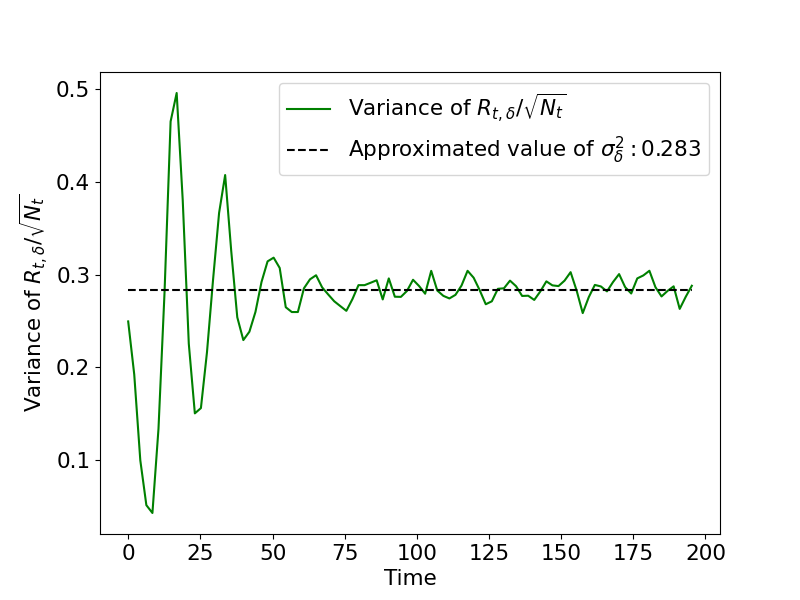}
        \caption{$(k,\theta) =(28.9, 0.59)$.}
    \end{subfigure}
    
    \begin{subfigure}[t]{0.4\textwidth}
        \centering
        \includegraphics[width=\textwidth]{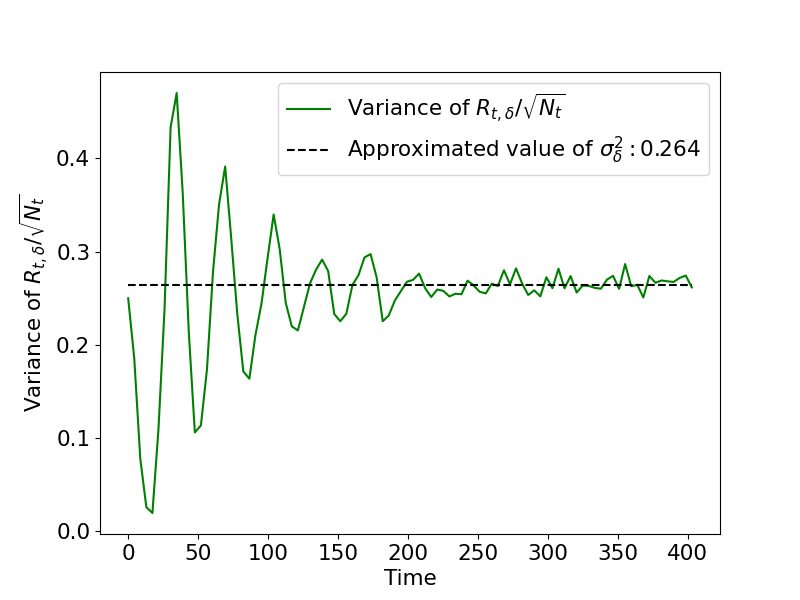}
        \caption{$(k,\theta) =(35,1)$.}
    \end{subfigure}
    \hfill 
    \begin{subfigure}[t]{0.4\textwidth}
        \centering
        \includegraphics[width=\textwidth]{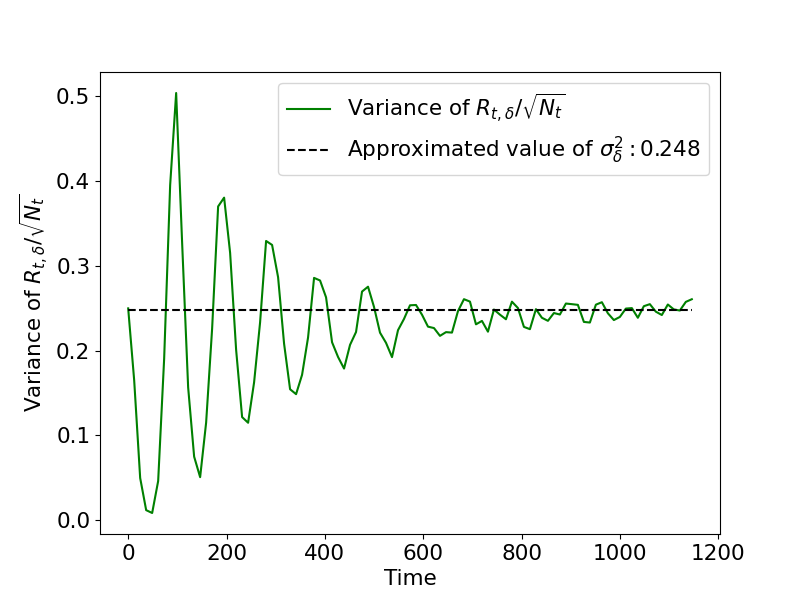}
        \caption{$(k,\theta) =(46.8,2.1)$.}
    \end{subfigure}
    \caption{Comparison of $\overline{\sigma}_{\delta}^2$ with the variance of $\frac{R_{t,\delta}}{\sqrt{N_t}}$ in different cases. \it{We use the grid of parameters $\mathbb{G}_{1/20}$, and we take $\delta$ as explained in Section \ref{subsubsect:step_identifiability}. We also estimate the variance of $\frac{R_{t,\delta}}{\sqrt{N_t}}$ using the empirical estimator of the variance with $2000$ simulations of Bellman-Harris dynamics. To simplify the computation of $\overline{\sigma}_{\delta}^2$, we do the approximation that $\sigma_{\delta}^2$ does not depends on $\alpha$ (or equivalently $\theta$), see Figure \ref{fig:psi_independence_alpha} and the end of Section \ref{subsubsect:step_identifiability}.}}
    \label{fig:approximation_residuals}
\end{figure}

We now have everything we need to do the inference. We follow the following pipeline to recover $(k,\theta)$, and  test it with simulations. In the rest of Section \ref{sect:simus}, the “number of simulations" is the number of dynamics we have simulated to create our dataset, so corresponds to $n_{\text{data}}$.  \\

\emph{Step 1 : estimation $\widehat{\alpha}$ of   the Malthusian coefficient $\alpha$.}
For all $j\in \llbracket1,n_{\text{data}}\rrbracket$, we do a linear regression of $t\mapsto \log (N_t^{(j)})$ using our data set $(N_{i\Delta}^{(j)} : i\leq I)$, which gives us an estimated value $\widehat{\alpha}^{(j)}$. Then, we take
$$
\widehat{\alpha} = \frac{1}{n_{\text{data}}}\sum_{i = 1}^{n_{\text{data}}} \widehat{\alpha}^{(j)}
$$
as an estimation of $\alpha$.

The estimation of $\alpha$ we obtain is very precise, even with a small number of simulations, as we see in Figure \ref{fig:illustration_estimation_alpha} where all the relative errors are below $1\%$. 
In addition, even exponential quantities such as $e^{\widehat{\alpha}\delta}$ closely approximate $e^{\alpha\delta}$. Indeed, we plot in Figure \ref{fig:illustration_estimation_exponential_alpha} the relative errors for the estimation of $e^{\alpha\delta}$, with $\delta$ of the order we choose in practice (we refer to Sections \ref{subsubsect:step_detection_regime} and~\ref{subsect:inference_Gaussian_regime}), and we see that all the errors are below $1.5\%$. 

\smallskip
\begin{figure}[!ht]
    \centering
    \begin{subfigure}[t]{0.45\textwidth}
        \centering
        \includegraphics[width = \textwidth]{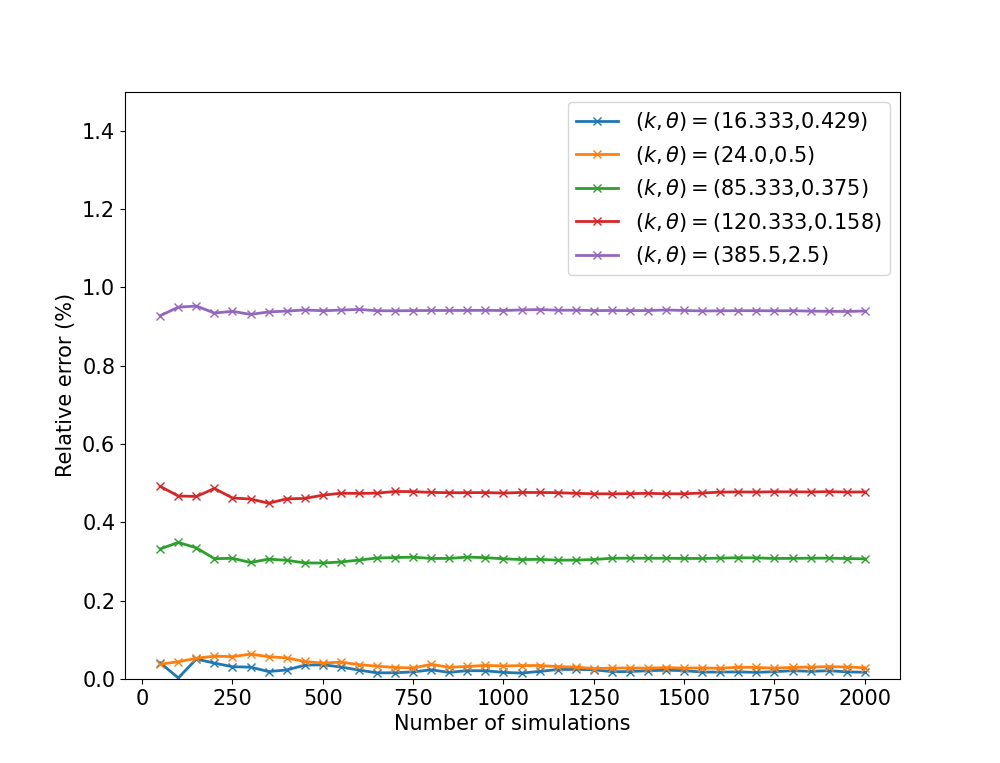}
        \caption{Relative error of the estimation of $\alpha$ versus the number or simulations, for a number of simulations in $\left\{ 50l\,|\,l\in\llbracket1,40\rrbracket\right\}$.}
        \label{fig:illustration_estimation_alpha}
    \end{subfigure}
    \hfill
    \begin{subfigure}[t]{0.45\textwidth}
        \centering
        \includegraphics[width = \textwidth]{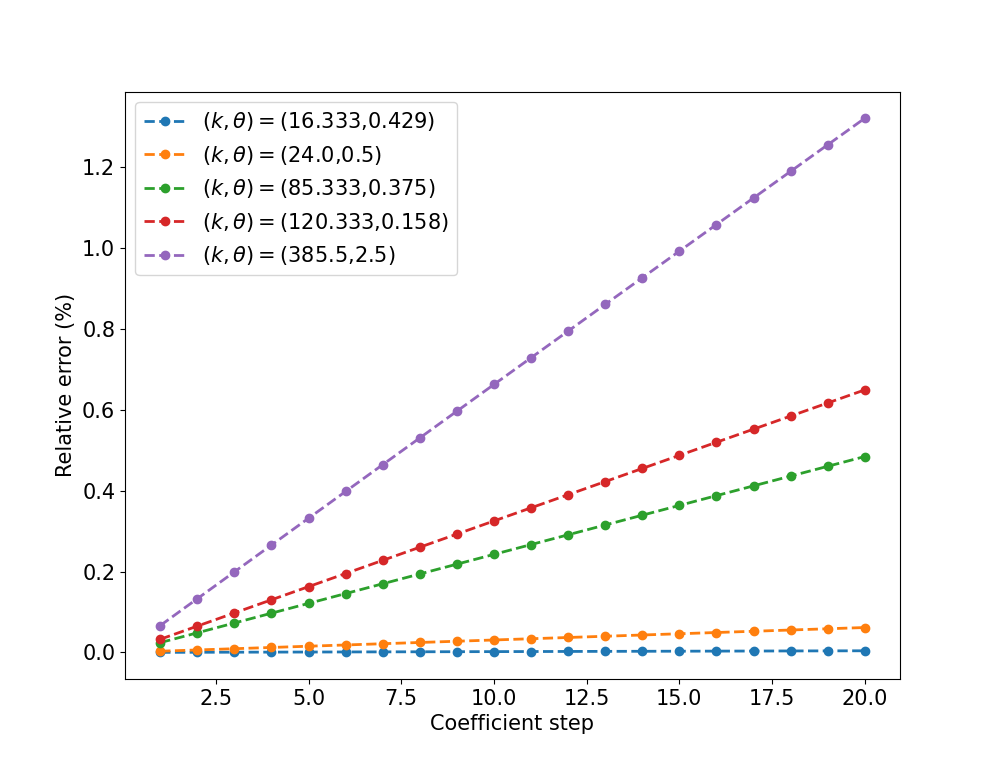}
        \caption{Relative error of the estimation of $e^{\alpha l \delta}$, with $\delta = \frac{\log(2)}{10\widehat{\alpha}}$, versus $l\in \llbracket 1, 20\rrbracket$. \it{The estimation was done using $100$ simulations.}}
        \label{fig:illustration_estimation_exponential_alpha}
    \end{subfigure}
    \caption{Illustration of the estimation of $\alpha$ and $e^{\alpha\delta}$ for different parameters. \it{Due to oscillations, see Figures \ref{fig:mean_cells_small_amplitude} and \ref{fig:mean_cells_large_amplitude}, the errors slightly increase when $k$ increases.}} \label{fig:illustration_estimation_linked_to_alpha}
\end{figure}
\emph{Step 2 : determination of the (Gaussian or oscillating) regime.}
We assume here that we do not fall in the critical and need to determine from the data set if $k >k_c$ or $k < k_c$. For that purpose, we compare $\widehat{\alpha}$ with the growth of the empirical variance of $N_{t+\delta} - e^{\widehat{\alpha}\delta}N_t$
\begin{equation}\label{eq:empirical_variance_residuals}
\widehat{\Var}\left(N_{t+\delta} - e^{\widehat{\alpha}\delta_1}N_t\right) = \frac{1}{n_{\text{data}}}\sum_{k = 1}^{n_{\text{data}}} \left(\left(N_{t+\delta}^{(k)} - e^{\widehat{\alpha}\delta}N_t^{(k)}\right) - \frac{1}{n_{\text{data}}}\sum_{j = 1}^{n_{\text{data}}}\left(N_{t+\delta}^{(j)} - e^{\widehat{\alpha}\delta}N_t^{(j)}\right) \right)^2,
\end{equation}
where $t=i\Delta$, $i< I$.
At this step, we need to be careful in the choice
of  $\delta=\delta_1=n_1\Delta$, where $n_1 \in \mathbb{N}^*$, see Section \ref{subsect:determination_regime}.\\

\emph{Step 3 : estimation   $(\widehat{k},\widehat{\theta})$ of $(k,\theta)$.}
Recall that $\alpha$ has been estimated by $\widehat{\alpha}$. Depending on the regime, the estimator $(\widehat{k},\widehat{\theta})$ is different.\\
a) In the Gaussian regime, we first fix $n_{\text{times}}\in\mathbb{N}^*$, $\left(T_j\right)_{1\leq j \leq n_{\text{times}}}$ a family of times such that for all $j\in\llbracket1,n_{\text{times}}\rrbracket$ we have $\left(T_j/\Delta,(T_j+\delta)/\Delta\right)\in \llbracket0,I\rrbracket^2$. Then, we use the following estimators to estimate $k$ and $\theta$
\begin{equation}\label{eq:Gaussian_regime_estimator}
\begin{aligned}
        \widehat{k} = \underset{k\in[1,k_c[}{\text{argmin}} \left|\overline{\sigma}_{\delta}^2(k,\widehat{\alpha}) - \frac{1}{n_{\text{times}}}\sum_{j = 1}^{n_{\text{times}}}\widehat{\Var}\left( N_{T_j+\delta} - e^{\widehat{\alpha}\delta}N_{T_j}\right)\right|, \hspace{1.5mm}\text{ and }\hspace{1.5mm}\widehat{\theta}= (2^{1/\widehat{k}} - 1)/\widehat{\alpha},
        \end{aligned}
        \end{equation}
        where $k_c$ defined in~\eqref{eq:definition_thresold_kc} is the threshold between the two regimes,  and $\overline{\sigma}_{\delta}^2(k,\widehat{\alpha})$ is the function used to approximate $\sigma_{\delta}^2(k,\widehat{\alpha})$, see the third paragraph of this section. 
        Here the time step
        $\delta=\delta_2 = n_2\Delta$, where $n_2 \in \mathbb{N}^*$, needs also to be chosen carefully to ensure uniqueness of the argmin. It will be different from $\delta_1$, 
    see Sections \ref{subsect:determination_regime} and \ref{subsect:inference_Gaussian_regime}. In addition, the times $(T_j)_{1\leq j \leq n_{\text{times}}}$ must be chosen “large", as we are interested in the asymptotic value of this variance. We choose $n_{\text{times}}$ times instead of only one, to have a more stable estimation of this variance. For more information, we refer to the end of Section~\ref{subsubsect:summary_choices_hyperparameters}.\\
b) In the  oscillating regime, we first estimate $\lambda$ by $\widehat{\lambda}$ thanks to a linear regression :  $\widehat{\lambda}$ is the slope of 
$$t\mapsto  \frac{1}{2}
\log\left(\widehat{\Var}\left(N_t - e^{\widehat{\alpha}\delta_1}N_t\right)\right).$$ Then, we estimate $(k,\theta)$ with $$(\widehat{k},\widehat{\theta}) = \underset{\substack{(k,\theta)\in(k_c,+\infty[\times\mathbb{R}_+^* \\ (2^{1/k} - 1)/\theta = \widehat{\alpha}}}{\text{argmin}} \left|(2^{1/k}\cos\left(2\pi/k\right)-1)/\theta - \widehat{\lambda}\right|.$$

Let us now detail the main points of this procedure and give the results obtained by simulations.
We first explain how to choose  $\delta_1 >0$ for the determination of the regime in Step $2$.
Then, we consider  the Gaussian regime and explain how to choose $\delta_2$ and compare in that case the estimated value to the theoretical one for simulations. We   finally deal with the oscillating regime. The score we use to measure the quality of the estimation is the relative error, defined as 
$$
\text{relative error}=\frac{|\text{estimated value} - \text{theoric value}|}{\text{theoric value}}.
$$

\subsection{Determination of the regime}\label{subsect:determination_regime}


\subsubsection{Why do we need to take care of the time step for the detection of the regime ?} 
Our approach relies on the study of
$$R_{T}^{\delta} =N_{T+\delta} -e^{\delta  \alpha} \, N_T\,=  X_{T}^{\delta} + Y_{T}^{\delta}.$$
 Indeed, $R_{T}^{\delta}$ 
can be estimated from data, and is related
to the parameters of interest $(k,\theta)$ in a sensible way. This link has been studied in Section \ref{sect:asympt}. We have shown that when $\lambda > \alpha/2$,   $X_{T}^{\delta}$ and $Y_{T}^{\delta}$ behave for large $T$  as follows
$$ X_{T,\delta} \approx \mathcal{N}(0,\sigma_{X,\delta}^2)\sqrt{n_1W}e^{T\alpha/2 }, 
\qquad  Y_{T}^{\delta}  \approx 2\left|M_{\delta}\right| \cos\left[\tau T +  \arg\left(M_{\delta}\right)\right]e^{T \lambda }.
$$
The Gaussian part $X_{T,\delta}$ then prevails compared to $Y_{T}^{\delta}$ when $T\rightarrow\infty$, see Theorem 
\ref{thm:main_result}. We also recall that
$$M_{\delta}=f(\delta)M, \quad  \text{with }  f(\delta) = \left|e^{(\lambda + i\tau)\delta} - e^{\alpha\delta}\right|.$$
For applications, $T$ may not be very large. If $T\times(\lambda - \alpha/2)$ is positive but small, and  
$M_{\delta}$ is close to $0$, the two contributions $X_{T}^{\delta}$ and $Y_{T}^{\delta}$ may be comparable. In that case, it becomes  difficult to detect that
$\lambda - \alpha/2$ is positive and thus that we are in the oscillating regime. That's why the  choice of  the time step $\delta$  matters, and we explain now  how to avoid that $f(\delta)$ (and thus $M_{\delta}$) is too small.

\subsubsection{Time step $\delta_1$ and  detection of the regime.}\label{subsubsect:step_detection_regime}
Let us write $\delta=\delta(c) = c\log(2)/\alpha$, where $c>0$, and recall that 
$$\alpha = \frac{2^{\frac{1}{k}} - 1}{\theta}, \qquad \lambda=\frac{2^{\frac{1}{k}}\cos\left(\frac{2\pi}{k}\right) - 1}{\theta}>\alpha/2 , \qquad  \tau=\frac{2^{\frac{1}{k}}\sin\left(\frac{2\pi}{k}\right)}{\theta}.$$
We search a value of $c$  so that

$$
f(\delta(c))= \left|e^{(\lambda + i\tau)c.\log(2)/\alpha} - e^{\alpha c.\log(2)/\alpha}\right|= \left|e^{(\lambda + i\tau)c.\log(2)/\alpha} - 2^c\right|
$$ 
is not too close to $0$. As we have
$$
\begin{aligned}
\lim_{k\rightarrow\infty} (\lambda + i\tau)\delta(c) &=\lim_{k\rightarrow\infty} c\log(2)\left(\frac{2^{\frac{1}{k}}\cos\left(\frac{2\pi}{k}\right) - 1}{\frac{1}{k}} + i\frac{2^{\frac{1}{k}}\sin\left(\frac{2\pi}{k}\right)}{\frac{1}{k}}\right) \frac{\frac{1}{k}}{2^{\frac{1}{k}}-1}
\\
&=c\left(\log(2) + i2\pi\right),
\end{aligned}
$$
we obtain
\begin{equation}\label{eq:modulus_versus_c_k_infinity}
\begin{aligned}
\lim_{k\rightarrow\infty} f(\delta(c))
& 
\, = \, 2^c\sqrt{2\left(1-\cos\left(2\pi c\right)\right)}.
\end{aligned}
\end{equation}
By this expression and the fact that the oscillating regime corresponds to large values of $k$, we see that the values of $c$ that seem interesting are those such that the cosines term is minimal, i.e. equal to $-1$. This corresponds to cases where $\delta = c \log(2)/\alpha$, with $c - 1/2\in \mathbb{N}$. At the opposite, the values of $c$ that should be avoided are those such that the cosines term is maximal, i.e. equal to $1$, when $c \in\mathbb{N}^*$. To illustrate this, let us plot in Figure \ref{fig:evol_module} the values of $f(\delta) = \left|e^{(\lambda + i\tau)\delta} - e^{\alpha\delta}\right|$ versus $\delta$, in the case where $(k,\theta) = (80.2,4)$. 
\begin{figure}[!ht]
\centering
\includegraphics[scale = 0.4]{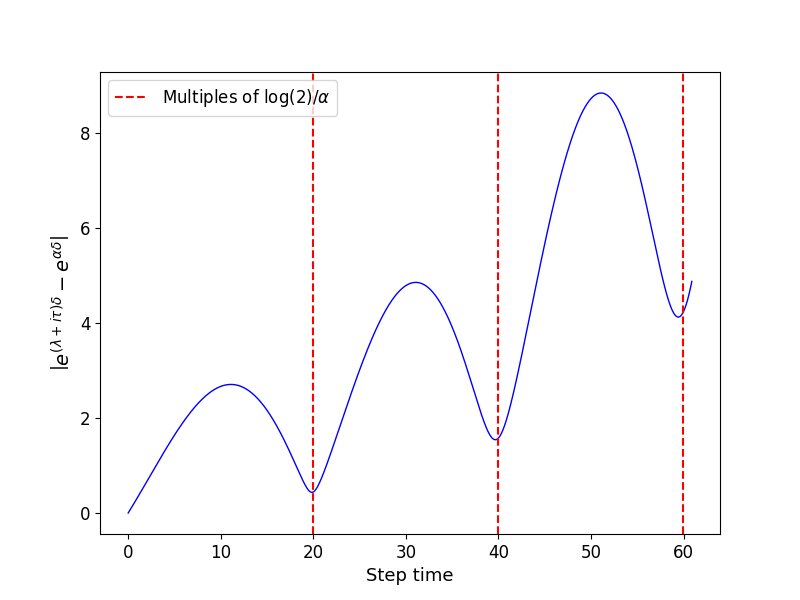}
\caption{Curve of $\left|e^{(\lambda+i\tau)\delta} - e^{\alpha\delta}\right|$ versus $\delta$, when $(k,\theta) = (80.2,4)$. \it{Red lines represent multiples of $\log(2)/\alpha$.}}
\label{fig:evol_module}
\end{figure}
We see in Figure \ref{fig:evol_module} that the value of the modulus is very low when $\delta$ is a multiple of~$\log(2)/\alpha$. Then, choosing a step time that is a multiple of this value would imply bad results on the estimation of the regime. 

At the opposite, in Figure \ref{fig:evol_module}, the amplitude of the oscillations seems to be maximal when $\delta = l\log(2)/\alpha + \log(2)/(2\alpha)$, where $l\in\mathbb{N}$. Then, it is better to take a time step as close as possible of these values when we detect the regime. By default, we take the minimal step that satisfy this condition, which is 
$$
\delta_1 = \underset{i\Delta,\,i\in\llbracket0,I\rrbracket}{\text{argmin}} \left|\Delta i -\log(2)/(2\widehat{\alpha})\right|.
$$
We take the minimal step because the error of the estimation of $e^{\alpha\delta}$ increases when $\delta$ increase, as observed in Figure \ref{fig:illustration_estimation_exponential_alpha}. 

\subsubsection{The threshold for the detection}\label{subsubsect:threshold_detection} To detect the regime,  we compare 
 the slope of the linear regression of $t\mapsto \widehat{\Var}\left(N_t - e^{\widehat{\alpha}\delta_1}N_t\right)$, which is $2\widehat{\lambda}$, to $\widehat{\alpha}$. Indeed, in the Gaussian regime, $\widehat{\lambda}$ does not estimate $\lambda$, but $\alpha/2$. In practice, we decide that we are in the Gaussian regime if  the difference of these two terms is less than $10\%$ of $\widehat{\alpha}$. It is relevant in various cases, see the results in Sections \ref{subsubsect:quality_gaussian} and \ref{subsect:inference_oscillating_regime}. A more systemic exploration of the set of parameters or theoretical may be interesting and would allow to be more precise.

\subsection{Inference  in the Gaussian regime}\label{subsect:inference_Gaussian_regime}
\subsubsection{Identifiability : the choice of the time step $\delta_2$ for the estimation}\label{subsubsect:step_identifiability} We need to choose the time step $\delta_2$  to achieve Step $3$. Our estimator \eqref{eq:Gaussian_regime_estimator}
involves a minimization, and the problem is identifiable if there is a unique minimizer  (argument of the minimum). This depends on the choice of $\delta$. 
Denoting $\widehat{\alpha}$ the estimator of $\alpha$, the uniqueness of the minimizer requires an injectivity property, for the function
$$
k \mapsto \sigma_{\delta}^2(k,\widehat{\alpha})= \sigma_{X,\delta}^2(k,\widehat{\alpha}) + 2\widehat{\alpha}\sigma_{Y,\delta}^2(k,\widehat{\alpha}).
$$

The function $\sigma_{\delta}^2(.,\widehat{\alpha})$ is not injective in general, as we see in Figure \ref{fig:residuals_without_identifiability}, where two different values of $k$ can have the same image by $\sigma_{\delta}^2(.,\widehat{\alpha})$. This comes from the fact that $\sigma_{X,\delta}^2(.,\widehat{\alpha})$ and 
$\sigma_{Y,\delta}^2(.,\widehat{\alpha})$ have opposite monotonicity.

\begin{figure}[!ht]
    \centering
    \begin{subfigure}[t]{0.45\textwidth}
        \centering
    \includegraphics[width = \textwidth]{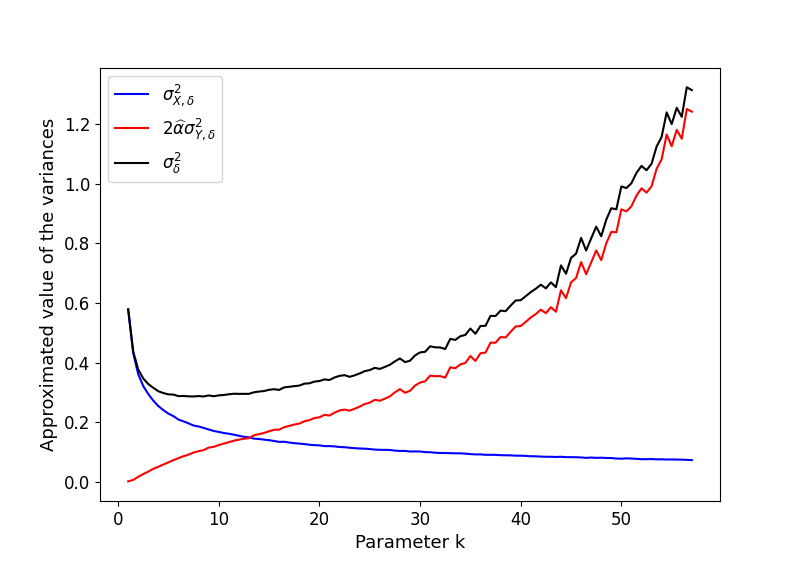}
    \caption{For $\delta = \log(2)/(2\widehat{\alpha})$.}
    \end{subfigure}
    \hfill
    \begin{subfigure}[t]{0.45\textwidth}
    \centering
    \includegraphics[width = \textwidth]{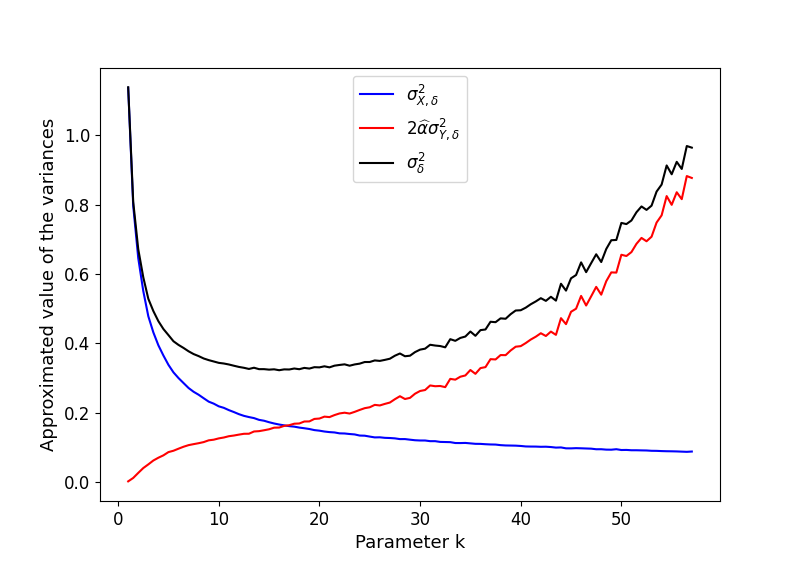}
    \caption{For $\delta = 3\log(2)/(4\widehat{\alpha})$.}
    \end{subfigure}
    \caption{$\sigma_{\delta}^2(k,\widehat{\alpha})$ (black), $\sigma_{X,\delta}^2(k,\widehat{\alpha})$ (blue) and $2\widehat{\alpha}\sigma_{Y,\delta}^2(k,\widehat{\alpha})$ versus $k$, for $\widehat{\alpha} = 1$, and for different time steps $\delta$ such that $\sigma_{\delta}^2(.,\widehat{\alpha})$ is not injective.  \it{$\sigma_{\delta}^2$ has been approximated using $\overline{\sigma}_{\delta}^2$, computed with the grid of parameters $\mathbb{G}_{1/2}$. The mesh size is small, because the non-smoothness of the curves when k is large decreases the readability of the figure with a larger mesh size. To approximate $\sigma_{X,\delta}^2$ and~$\sigma_{Y,\delta}^2$, we only keep the part of our approximation of $\sigma_{\delta}^2$ that corresponds to each of these quantities.}}
    \  \label{fig:residuals_without_identifiability}
\end{figure}


In the oscillating regime, we saw in Section \ref{subsect:determination_regime} that we can significantly reduce the influence of $(Y_{t}^{\delta})_{t\geq0}$ by choosing $\delta = \log(2)/\widehat{\alpha}$, see Figure \ref{fig:evol_module}. Thus, our conjecture is that even in the Gaussian regime, choosing $\delta = \log(2)/\widehat{\alpha}$ would allow us to have injectivity of $\sigma_{\delta}^2(.,\widehat{\alpha})$, because the impact of $\sigma_{Y,\delta}^2(.,\widehat{\alpha})$ will be significantly reduced. To check this hypothesis, we plot in Figure \ref{fig:plot_injectivity} the evolution of $\sigma_{\delta}^2(k,\widehat{\alpha})$ versus the parameter $k$ for several values of $\widehat{\alpha}$. If the curves that we obtained are injective, then our conjecture is valid. We see in Figure \ref{fig:plot_injectivity} that this is the case. 

\begin{figure}[!ht]
    \centering
    \begin{subfigure}[t]{0.45\textwidth}
        \centering
        \includegraphics[width = \textwidth]{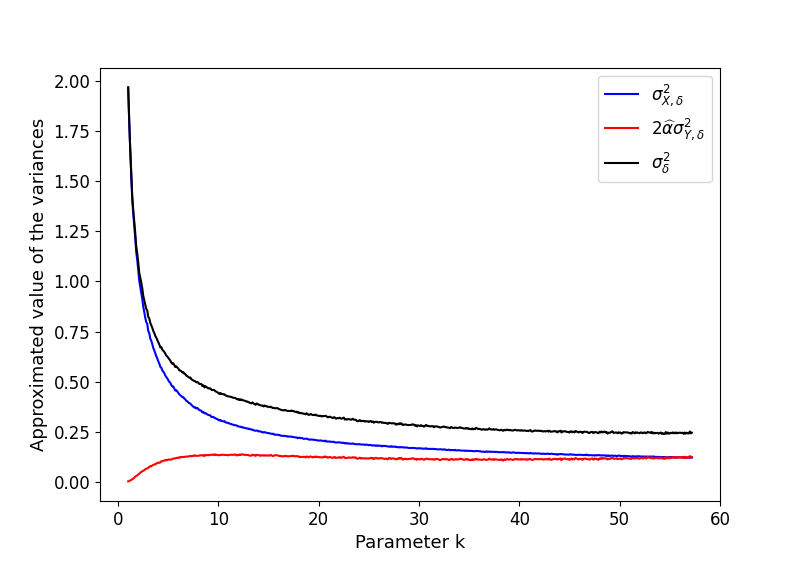}
        \caption{$\sigma_{\delta}^2(k,\widehat{\alpha})$ (black), $\sigma_{X,\delta}^2(k,\widehat{\alpha})$ (blue) and $2\widehat{\alpha}\sigma_{Y,\delta}^2(k,\widehat{\alpha})$ (red) versus $k$, for $\widehat{\alpha} = 1$.}
    \end{subfigure}
    \hfill 
    \begin{subfigure}[t]{0.45\textwidth}
        \centering
        \includegraphics[width = \textwidth]{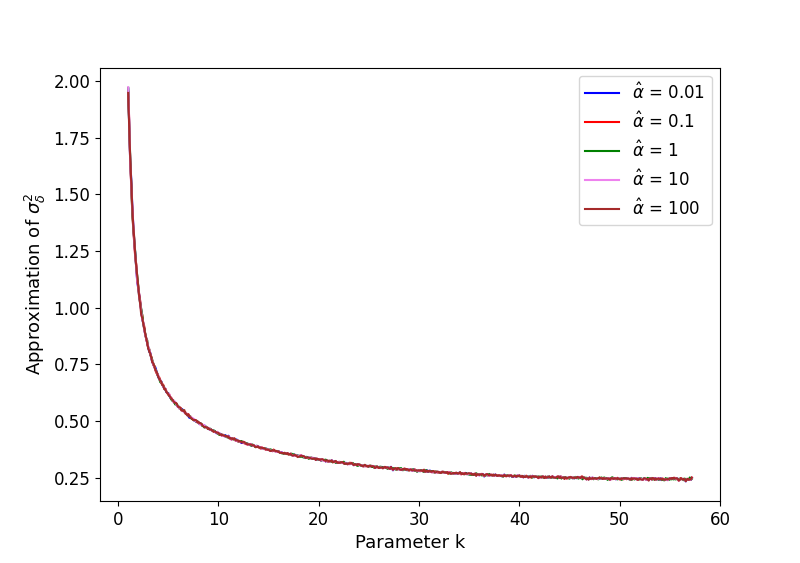}
        \caption{$\sigma_{\delta}^2(k,\widehat{\alpha})$ versus $k$ for $\widehat{\alpha}\in[0.01,0.1,1,10,100]$. \it{The figure on the right shows that $\sigma_{\delta}^2(k,\widehat{\alpha})$ seems to be independent of the value of $\widehat{\alpha}$.}}\label{fig:psi_independence_alpha}
    \end{subfigure}
    \caption{$\sigma_{\delta}^2(k,\widehat{\alpha})$ versus $k$ for $\delta = \log(2)/\widehat{\alpha}$, for different values of $\widehat{\alpha}$. \it{To approximate $\sigma_{\delta}^2$ (and $\sigma_{X,\delta}^2$ and $\sigma_{Y,\delta}^2$ that compose it), we use $\overline{\sigma}_{\delta}^2$. The grid of parameters used to do the approximation is $\mathbb{G}_{1/10}$.}}
    \label{fig:plot_injectivity}
\end{figure}

\noindent Then, for our inference in the Gaussian regime, we are going to use 

$$
\delta_2 = \underset{i\Delta,\,i\in\llbracket0,I\rrbracket}{\text{argmin}} \left|\Delta i -\log(2)/\widehat{\alpha}\right|.
$$
When $\delta_2$ is sufficiently close to $\log(2)/\widehat{\alpha}$, as we have observed in Figure \ref{fig:psi_independence_alpha} that $\sigma(k,\widehat{\alpha})$ seems independent of $\widehat{\alpha}$, we use $\overline{\sigma}_{\delta}^2(k,1)$ to approximate $\sigma(k,\widehat{\alpha})$. This allows us to save a lot of time for the computation of our “approximator" (that can take several days if we use a large mesh size), without compromising the quality of the results.


\subsubsection{Overview of the choices of the parameters to do the inference}\label{subsubsect:summary_choices_hyperparameters}
We are now going to check the quality of our inference method on simulated data. Let us first give  an overview of our choices for the parameters of the protocol presented in Section~\ref{subsect:framework_protocol}. They rely on the results of the previous sections.
\smallskip

\emph{Choice 1: Time step for the determination of the regime.} The first parameter we must choose is the time step when we compute the growth of the empirical variance presented in~\eqref{eq:empirical_variance_residuals}. Our study in~Section~\ref{subsubsect:step_detection_regime} has highlighted that the step 
	\begin{equation}\label{eq:choice_step_time_regime_detection}
		\delta_1 = \underset{i\Delta,\,i\in\llbracket0,I\rrbracket}{\text{argmin}} \left|\Delta i -\log(2)/(2\widehat{\alpha})\right|
	\end{equation}
	should be a good choice. The reason is that we need to have a time step for which the modulus of $M_{\delta}$, defined in~\eqref{eq:random_modulus_oscillating_regime}, is large, and that we have exhibited in~Section~\ref{subsubsect:step_detection_regime} that the time step $\log(2)/(2\alpha)$ maximizes this modulus when $k\rightarrow +\infty$, see~\eqref{eq:modulus_versus_c_k_infinity}. 
	
	\smallskip
	
	\emph{Choice 2: Threshold for the determination of the regime.} The second parameter we must choose is the threshold for the relative error between $2\widehat{\lambda}$ and $\widehat{\alpha}$, from which we determine if $2\lambda$ is different from $\alpha$ or not, see Section~\ref{subsubsect:threshold_detection}. Taking it at $5\%$ leads to detection errors, while taking it at $15\%$ seems too high. Then, we set this threshold  at $10\%$. The fact that all the regime detected in Sections~\ref{subsubsect:quality_gaussian} and~\ref{subsect:inference_oscillating_regime} are correct validates that this choice is appropriate.
	
	\smallskip
	
	\emph{Choice 3: Time step for the inference in the Gaussian regime.} The third parameter we must choose is the time step when we compute the estimators of $(k,\theta)$ in the Gaussian regime, see~\eqref{eq:Gaussian_regime_estimator}. Our numerical study in Section~\ref{subsubsect:step_identifiability} has highlighted that the time step $\log(2)/\alpha$ is a time step for which the problem is identifiable. Then, a relevant choice for this time step is
$$
\delta_2 = \underset{i\Delta,\,i\in\llbracket0,I\rrbracket}{\text{argmin}} \left|\Delta i -\log(2)/\widehat{\alpha}\right|.
$$
Indeed, it is the closest to $\log(2)/\alpha$ that we can take for a dataset with the form presented in~\eqref{eq:dataset_simus}. 

\emph{Choice 4: Time measurements for the inference in the Gaussian regime.}
We finally need to choose the family of time measurements $\left(T_j\right)_{1\leq j\leq n_{\text{times}}}$ for estimating the variance of the asymptotic fluctuations, see~\eqref{eq:Gaussian_regime_estimator}. As illustrated in Figure~\ref{fig:approximation_residuals}, the fluctuations converge towards $\sigma_{\delta}^2$ as $t\rightarrow +\infty$ by oscillating around it, especially when $k$ is close to $k_c$. This is due to the second eigenvalue.  Estimating the asymptotic variance of fluctuations with the empirical variance at only one time is thus not optimal, as this estimator may be too far above or below the real value of $\sigma_{\delta}^2$ due to the oscillations. We thus need to choose a family of times $\left(T_j\right)_{1\leq j\leq n_{\text{times}}}$ such that when $k$ is close to $k_c$, the empirical variance of the fluctuations is above $\sigma_{\delta}^2$ for a part of them, and below $\sigma_{\delta}^2$ for the other part. This would allow to have a good estimation of~$\sigma_{\delta}^2$ by taking the average of them, as presented in~\eqref{eq:Gaussian_regime_estimator}.

To get this property, we include  the time $\tilde{T}_1 = I\Delta - \delta_2$, which is the largest time for which we can compute the fluctuations. Moreover, we complete the family  of times with $\tilde{T}_1 - (j-1)\log(2)/(8\alpha)$, where~$j\in\llbracket1,8\rrbracket$. This allows to explore inside a time interval of length $\log(2)/\alpha$, that is approximately the period of the oscillations when $k$ is large, see Figure~\ref{fig:evol_module}.

In practice, we do not necessarily have measurements for the times presented above, see~\eqref{eq:dataset_simus}. Hence, we actually consider $n_{\text{times}} = 8$ and for all~$j\in\llbracket1,n_{\text{times}}\rrbracket$,
$$
T_j = \underset{i\Delta,\,i\in\llbracket0,I\rrbracket}{\text{argmin}} \left|I\Delta - \delta_2 -(j-1)\log(2)/(8\widehat{\alpha})\right|.
$$
The fact that all the estimation results are good in Section~\ref{subsubsect:quality_gaussian} confirms that this choice is appropriate.


\subsubsection{Quality of the estimation}\label{subsubsect:quality_gaussian} Now that we know which time steps, threshold and time measurements we take, we study the quality of the estimation on simulations. We denote in this paragraph \hbox{$(k_1,\theta_1) = (35,1)$}, $(k_2,\theta_2) = (25.4,2)$, $(k_3,\theta_3) = (14.5,3.4)$, and $(k_4,\theta_4) = (44,1.5)$. As for all $l\in \llbracket1,4\rrbracket$ we have $k_l < k_c$, these parameters are such that we are in the Gaussian regime, see Figure \ref{fig:thresold_spectral_gap}. For all $l\in\llbracket1,5\rrbracket$, we have simulated $n_{\text{data}} = 2000$ Bellman-Harris dynamics with lifetimes distributed according to $\Gamma(k_l,\theta_l)$, up to $8000$ cells. Then, for all~$l\in \llbracket1,5\rrbracket$, we create a dataset $(N_{l\Delta}^{(j)} : l\leq I,j\in\llbracket 1, n_{\text{data}}\rrbracket)$, where $I\in\mathbb{N}^*$, and denoting $\alpha_l = (2^{1/k_l} - 1)/\theta_l$,  $\Delta = \frac{\log(2)}{8\alpha_l}$. 

We use the pipeline explained at the beginning of Section \ref{sect:simus} to retrieve~$(k_i,\theta_i)$ for all~\hbox{$i\in\llbracket1,5\rrbracket$}, see Figure~\ref{fig:Gaussian_illustration_pipeline}. Then, we obtain an estimation of the mean and coefficient of variation of the distribution using the relation $(\mu,\frac{\sigma}{\mu}) = (k\theta, 1/\sqrt{k})$. For this inference, we use $\overline{\sigma}_{\delta}^2(k,1)$ to approximate $\sigma_{\delta}^2(k,\widehat{\alpha})$, see the end of Section \ref{subsubsect:step_identifiability}. The latter has been computed using a grid of parameters $\mathbb{G}_{1/20}$. The results of the estimation are given in~Table~\ref{tab:results_gaussian}. The scores that are the more relevant to consider are those for the estimation of~$(\mu,\frac{\sigma}{\mu})$. We see that they are very good. 

\begin{table}[!ht]
	\begin{center}
		\scalebox{0.65}{\begin{tabular}{|| c | c | c | c | c ||} 
				\hline
				Theoretical values of  $(k,\theta)$ & $(35, 1)$ & $(25.4, 2)$ & $(14.5, 3.4)$ & $(44, 1.5)$ \\ [0.5ex]  
				\hline
				$(k,\theta)$ Param. inferred  & $(34.67, 1.010)$ & $(25.68, 1.979)$ & $(14.65, 3.363)$ & $(47.71, 1.381)$  \\  [1ex] 
				
				$(k,\theta)$ Relative error & $(0.9302\%, 1.016\%)$ & $(1.093 \%, 1.058\%)$ & $(1.067 \%, 1.067\%)$ & $(8.426 \%,  7.919\%)$  \\  [1ex] 
				\hline\hline
				Theoretical values of $(\mu,\frac{\sigma}{\mu})$ & $(35, 0.1690)$ & $(50.8, 0.1984)$ & $(49.3,0.2626)$ & $(66, 0.1508)$ \\ [0.5ex]
				\hline
				$(\mu,\frac{\sigma}{\mu})$ Param. inferred  & $(35.03, 0.1698)$ & $(50.81, 0.1973)$ & $(49.29, 0.2612)$ & $(65.89, 0.1448)$ \\  [1ex] 
				
				$(\mu,\frac{\sigma}{\mu})$ Relative error & $(0.07611\%, 0.4684\%)$ & $(0.02374)\%, 0.5421\%)$ & $(0.01178 \%, 0.5293\%)$ & $(0.1598 \%, 3.964\%)$ \\  [1ex] 
				\hline
		\end{tabular}}.
	\end{center}
	\caption{Results of the estimation in the Gaussian regime. \it{The results are given with $4$ significant digits.}
	}\label{tab:results_gaussian}
\end{table} 
\begin{figure}[!ht]
	\centering
	\begin{subfigure}[t]{0.475\textwidth}
		\centering
		\includegraphics[scale = 0.25]{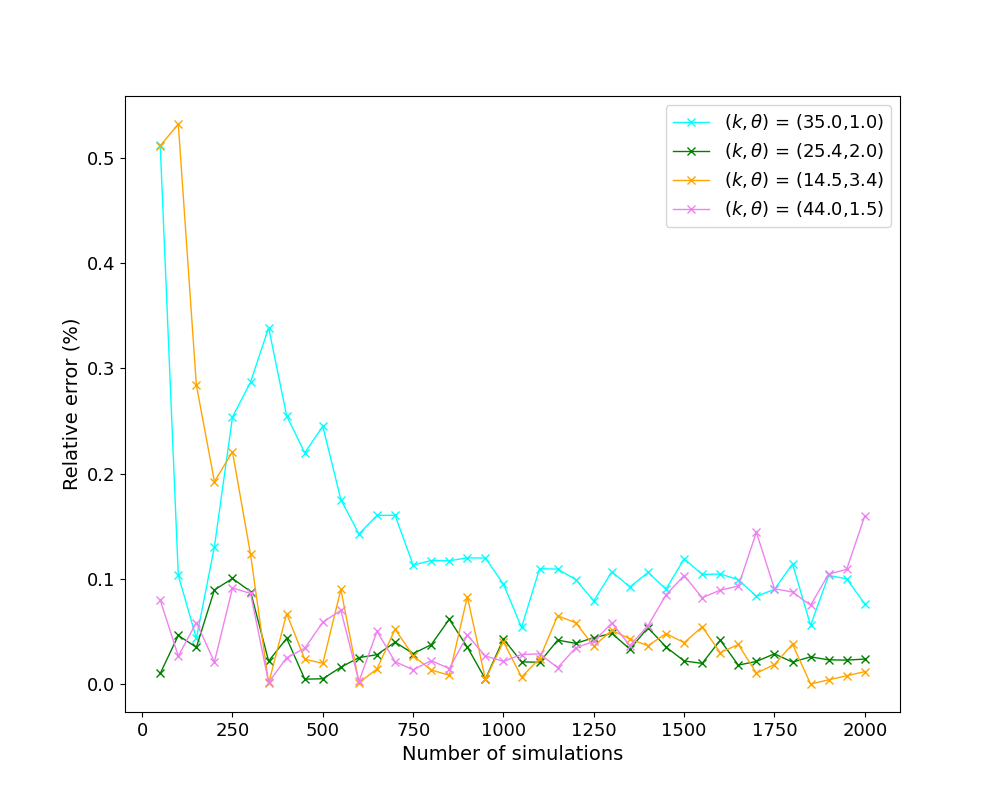}
		\caption{For $\mu$.}
		\label{fig:Gaussian_number_simulations_mean}
	\end{subfigure}
	\hfill
	\begin{subfigure}[t]{0.475\textwidth}
		\centering
		\includegraphics[scale = 0.25]{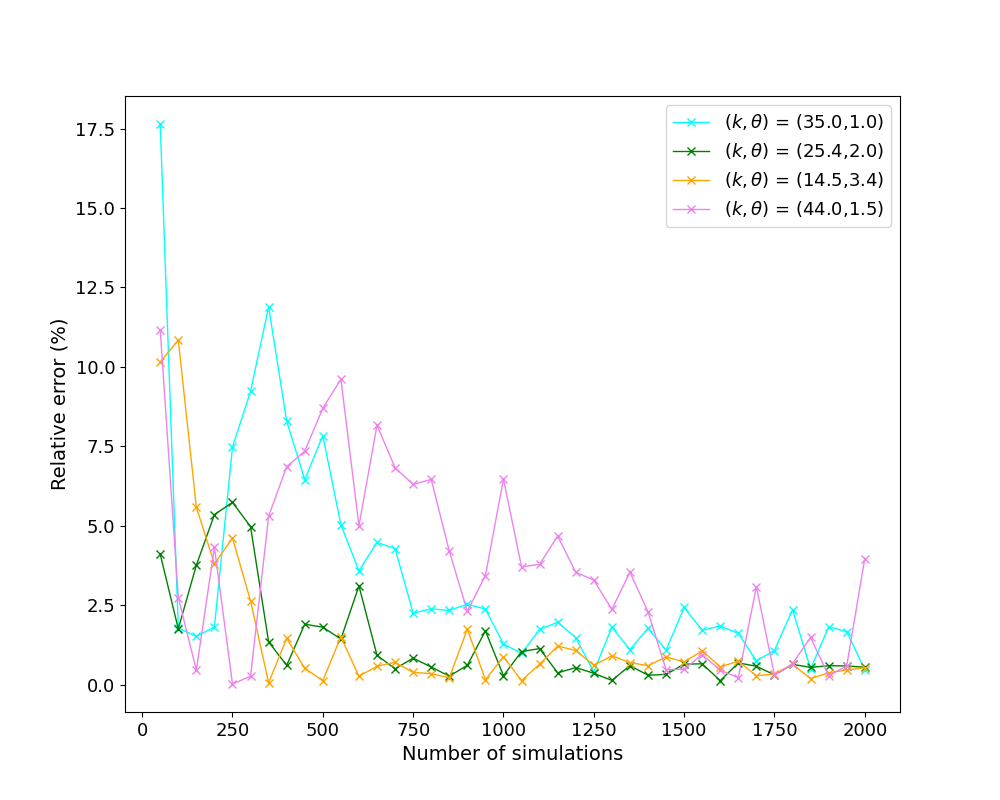}
		\caption{For $\sigma/\mu$.}
		\label{fig:Gaussian_number_simulations_coeff_var}
	\end{subfigure}
	\caption{Relative error of the estimation versus the number of simulations for the inference in the Gaussian regime.}
	\label{fig:Gaussian_number_simulations}
\end{figure}
\noindent To do this estimation, we used $2000$ simulations. For applications on real data, we rarely have so many simulations. Let us check that the estimation still works even if we use fewer simulations. We use the pipeline to estimate $(\mu,\frac{\sigma}{\mu})$ for $n_{\text{data}}\in\left\{ 50l\,|\,l\in\llbracket1,40\rrbracket\right\}$, and then we plot the relative error of the estimation versus $n_{\text{data}}$ in Figure \ref{fig:Gaussian_number_simulations}. We see that with a small number of simulations, even if the score the relative error has increased, scores are 

\begin{figure}[H]
	\centering
	\begin{subfigure}[t]{0.325\textwidth}
		\centering
		\includegraphics[scale = 0.25]{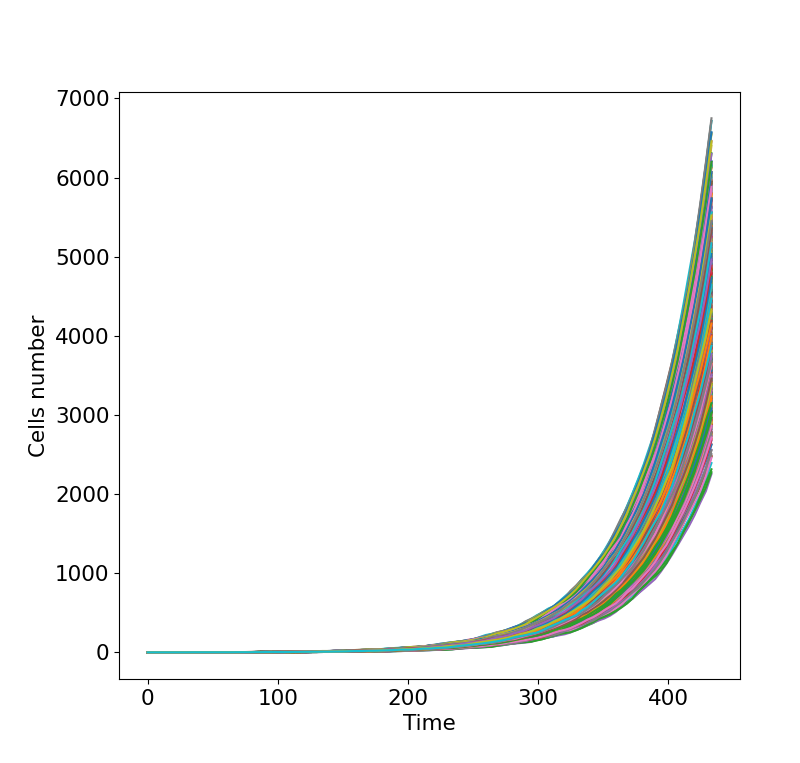}
		\caption{Number of individuals along time.}
	\end{subfigure}
	\hfill
	\begin{subfigure}[t]{0.325\textwidth}
		\centering
		\includegraphics[scale = 0.25]{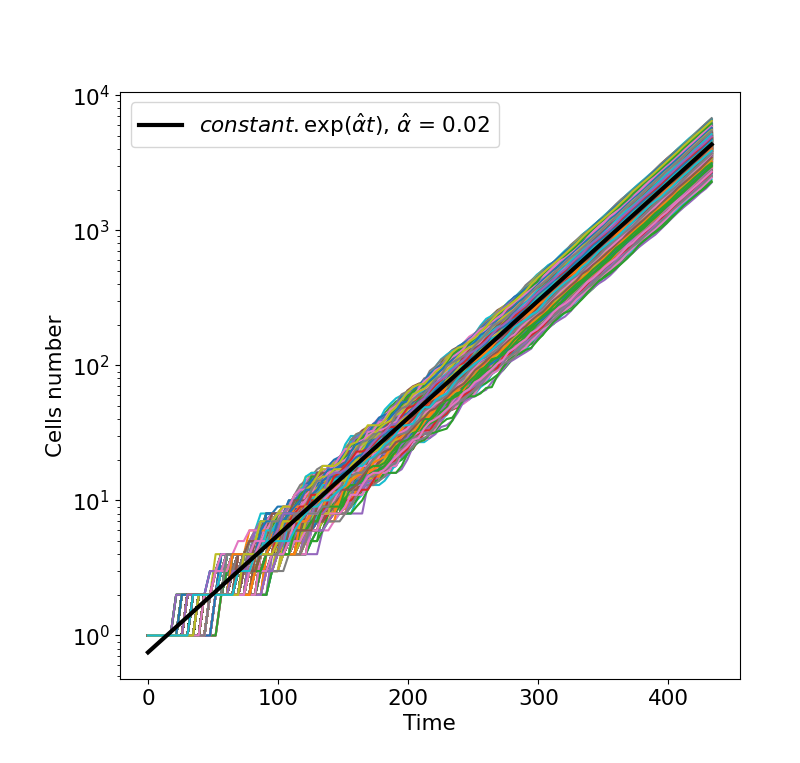}
		\caption{Number of individuals along time, at the log-scale. \it{The mean of the coefficients of determination of the linear regressions done to obtain $\widehat{\alpha}$ is $R_{\text{mean}}^2 = 0.9998$.}}
	\end{subfigure}
	
	\begin{subfigure}[t]{0.325\textwidth}
		\centering
		\includegraphics[scale = 0.25]{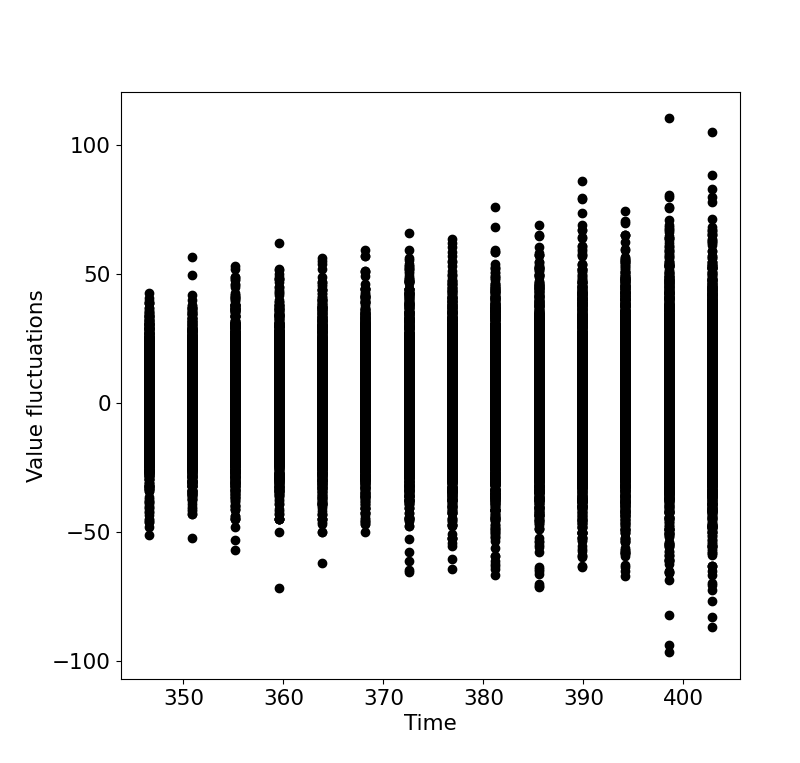}
		\caption{Fluctuations observed on simulations.}
	\end{subfigure}
	\hfill
	\begin{subfigure}[t]{0.325\textwidth}
		\centering
		\includegraphics[scale = 0.25]{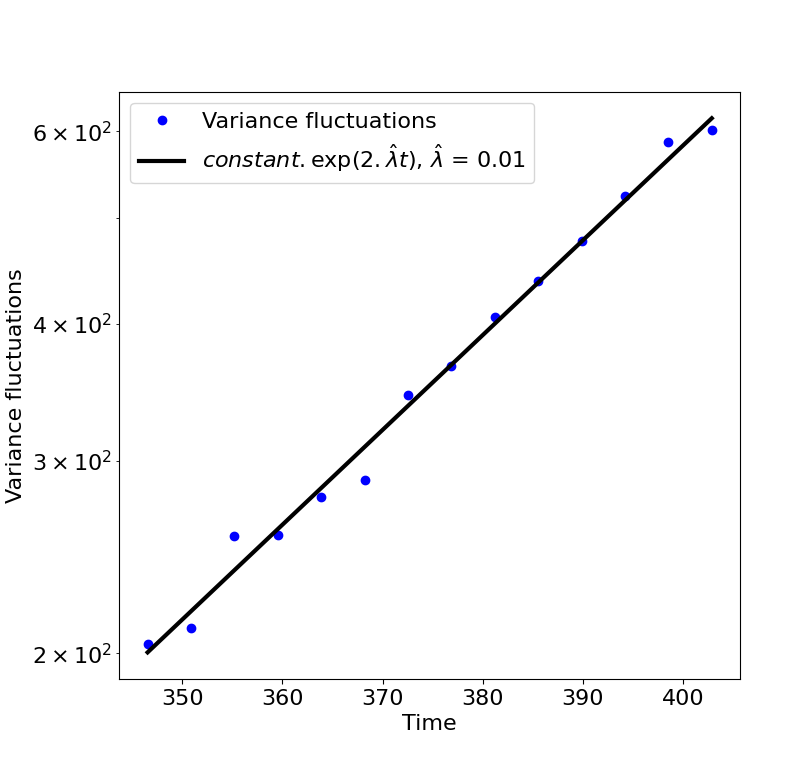}
		\caption{Variance of fluctuations, at the log-scale. \it{The coefficient of determination of the linear regression done to obtain $2\widehat{\lambda}$ is $R^2 = 0.9996$.}} 
	\end{subfigure} 
	
	\begin{subfigure}[t]{0.325\textwidth}
		\centering
		\includegraphics[scale = 0.25]{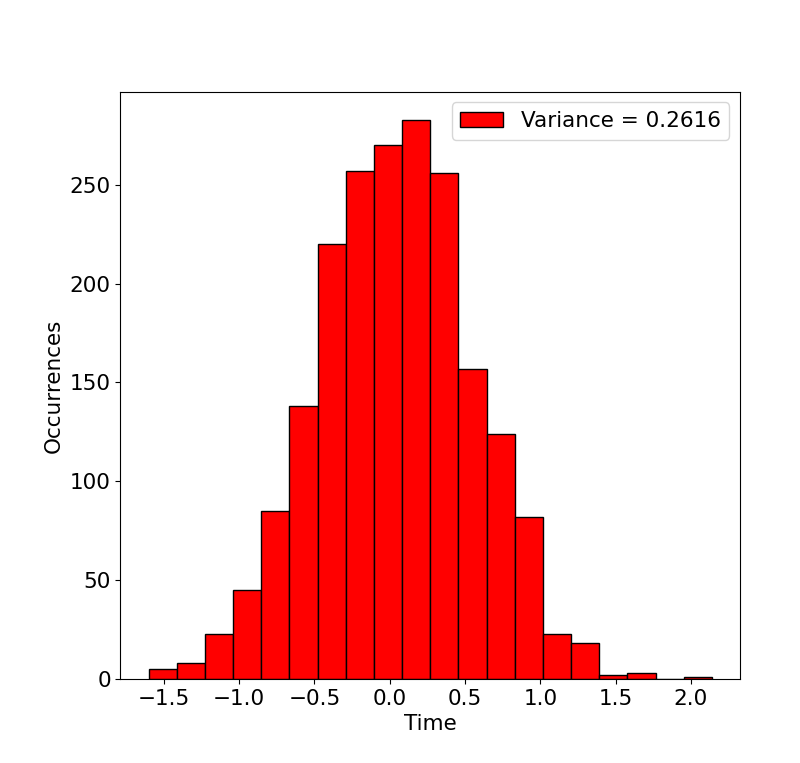}
		\caption{Histogram of the fluctuations at the last observation time.} 
	\end{subfigure}
	\caption{Illustration of the different steps for the inference with 2000 simulations, for the parameters $(k_1, \theta_1) = (35, 1)$.}
	\label{fig:Gaussian_illustration_pipeline}
\end{figure}

\noindent still correct considering the fact that we do not have a lot of simulations. Indeed, for the estimation of $\mu$, even with a small number of simulations, all the relative errors are below $1\%$ as we see in Figure \ref{fig:Gaussian_number_simulations_mean}. For the estimation of $\sigma/\mu$, that corresponds to Figure~\ref{fig:Gaussian_number_simulations_coeff_var}, quickly we are under $10\%$ of relative error. Thus, the inference is satisfying for this regime.

\subsection{Inference in the oscillating regime}\label{subsect:inference_oscillating_regime} 
Let us now consider  the oscillating regime. By Figure \ref{fig:thresold_spectral_gap}, we know that we are in this regime when $k > k_c$. Thus, we study examples where this condition is satisfied. We run $n_{\text{data}} =  2000$ simulations of Bellman-Harris processes with lifetimes distributed according to $\Gamma(k_l,\theta_l)$, with $l\in\llbracket1,4\rrbracket$ and~\hbox{$(k_1,\theta_1) = (70,1)$}, $(k_2,\theta_2) = (125,2)$, $(k_3,\theta_3) = (200.5,1)$, $(k_4,\theta_4) = (385.5,4)$. Then, we create a dataset from these  simulations. The simulations and the creation of the dataset are done in the same way as in the Gaussian regime, see Section \ref{subsubsect:quality_gaussian}.

We use the pipeline described at the beginning of this section to recover the lifetime parameters and then the mean and coefficient of variation of the distribution, see Figure~\ref{fig:oscillating_illustration_pipeline}. The time steps, threshold and time measurements we choose for that are the same as those presented in Section~\ref{subsubsect:summary_choices_hyperparameters}. The scores are displayed in Table \ref{tab:results_oscillating}. We see that the estimation of $(\mu,\frac{\sigma}{\mu})$ is very good, except in the case where $(k_4,\theta_4) = (385.5, 2.5)$ for which the score is not as satisfying for~$\frac{\sigma}{\mu}$. The reason is that when $k$ is very high, the amplitude of the oscillations is very large. Then, the linear regression done to obtain $\widehat{\lambda}$ is less reliable, as illustrated in Figure~\ref{fig:variance_fluctuations_large_amplitude}. For our motivation, cases with such low variability in cells lifetime are unrealistic, so this is not a significant issue. 

\begin{table}[!ht]
	\begin{center}\scalebox{0.7}{
			\begin{tabular}{||c | c |  c |  c | c ||} 
				\hline
				Theoretical values of   $(k,\theta)$ & $(70, 1)$ & $(125, 2)$ & $(200.5, 3)$ & $(385.5, 2.5)$  \\  \hline
				$(k,\theta)$ Param. inferred & $(77.78, 0.9017)$ & $(131.2, 1.911)$ & $(206.4, 2.931)$ & $(519.8, 1.871)$ \\ 
				$(k,\theta)$ Relative error & $(11.11\%, 9.835\%)$ & $(4.947 \%, 4.444\%)$ & $(2.940 \%, 2.315\%)$ & $(34.83 \%, 25.15\%)$  \\ 
				\hline\hline 
				Theoretical values of  $(\mu,\frac{\sigma}{\mu})$ & $(70, 0.1195)$ & $(250, 0.08944)$ & $(601.5, 0.7062)$ & $(963.8, 0.05093)$  \\  \hline
				$(\mu,\frac{\sigma}{\mu})$ Param. inferred & $(70.13, 0.1134)$ & $(250.7, 0.08731)$ & $(604.8, 0.06961)$ & $(972.7, 0.04386)$ \\  
				$(\mu,\frac{\sigma}{\mu})$ Relative error & $(0.1803\%, 5.130\%)$ & $(0.2833 \%, 2.385\%)$ & $(0.5562 \%, 1.438\%)$ & $(0.9251 \%, 13.88\%)$  \\   \hline
		\end{tabular}}
		\caption{Results of the estimation in the oscillating regime. \it{The results are given with $4$ significant digits.}}\label{tab:results_oscillating}
	\end{center}
\end{table}
\begin{figure}[!ht]
	\centering
	\begin{subfigure}[t]{0.475\textwidth}
		\centering
		\includegraphics[scale = 0.25]{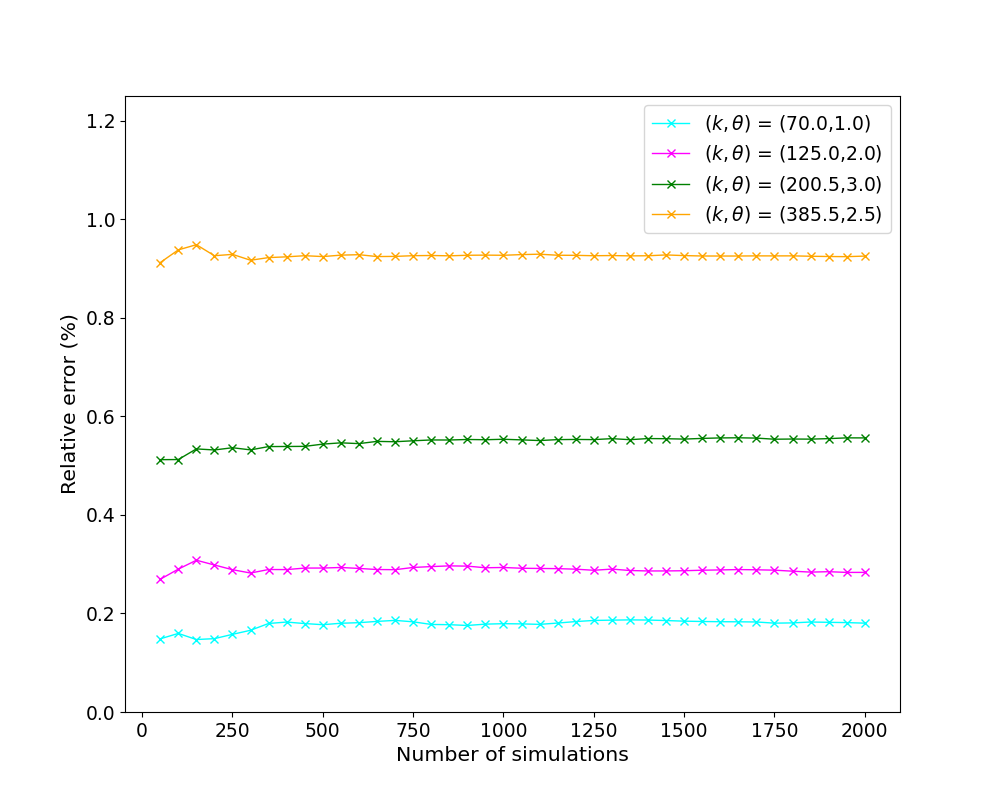}
		\caption{For $\mu$.}
	\end{subfigure}
	\hfill
	\begin{subfigure}[t]{0.475\textwidth}
		\centering
		\includegraphics[scale = 0.25]{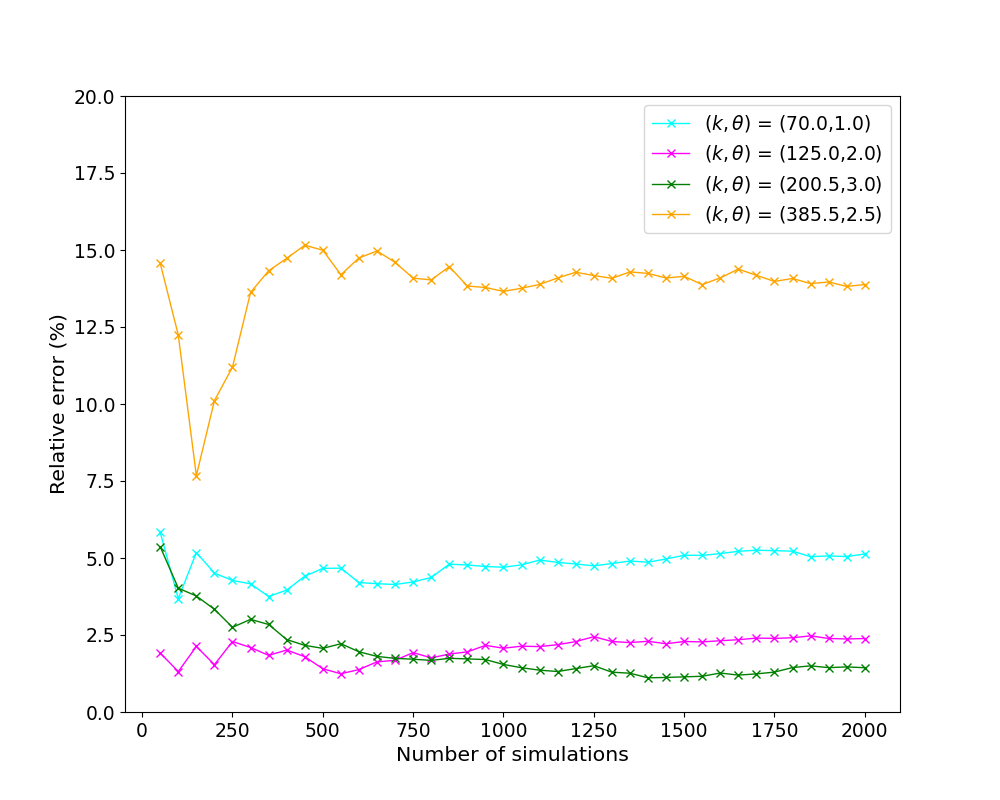}
		\caption{For $\sigma/\mu$.}
	\end{subfigure}
	\caption{Relative error of the estimation versus the number of simulations for the inference in the oscillating regime.}
	\label{fig:evol_n_simuls_oscillating}
\end{figure}

To see the effect of the number of simulations, we plot the evolution of the relative error for $n_{\text{data}}\in\left\{ 50l\,|\,l\in\llbracket1,40\rrbracket\right\}$ in Figure \ref{fig:evol_n_simuls_oscillating}. Again, the results are good except for $(k_4,\theta) = (385.5,2.5)$. The number of simulations seems to have a little influence on the relative error. Thus, the inference procedure seems very reliable for cases that motivate our study, and less precise for unrealistic cases with oscillations of large amplitude.

An estimation of the second eigenvalue based on the amplitude or on the frequency of the oscillations rather than a linear regression could improve the results in cases with large oscillations. As this is less relevant than a linear regression for the cases that motivate our study, we have not developed this type of estimation for the moment.

\begin{figure}[!ht]
    \centering
    \begin{subfigure}[t]{0.475\textwidth}
        \centering
        \includegraphics[scale = 0.3]{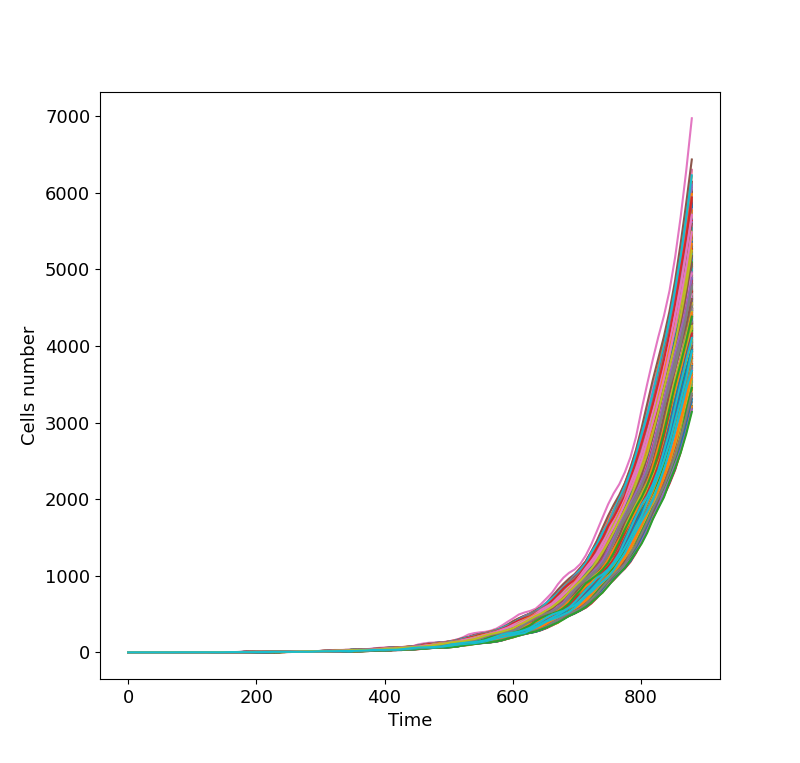}
        \caption{Number of individuals along time.}
    \end{subfigure}
    \hfill
    \begin{subfigure}[t]{0.475\textwidth}
        \centering
        \includegraphics[scale = 0.3]{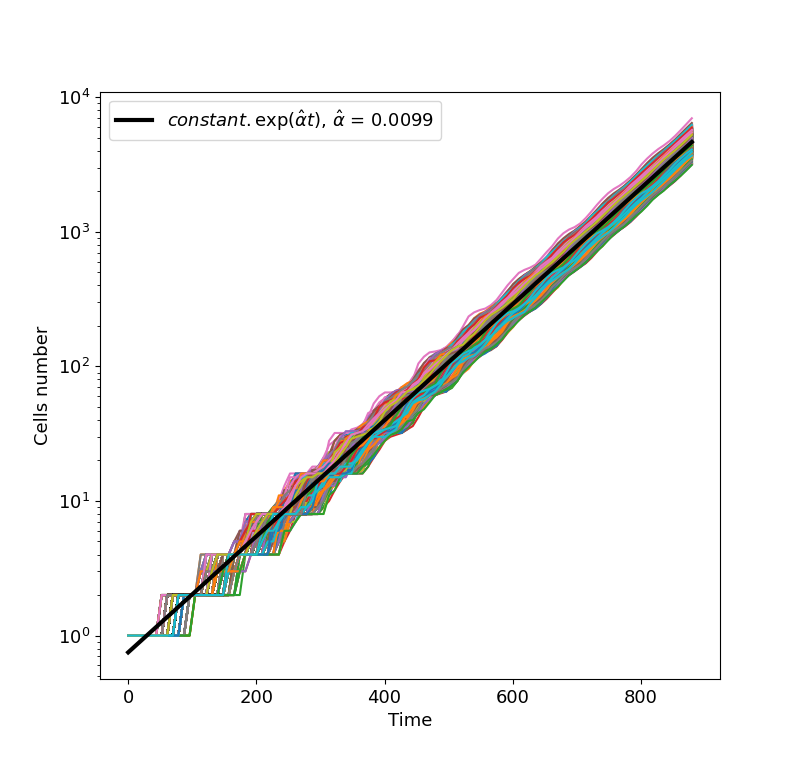}
        \caption{Number of individuals along time, at the log-scale. \it{The mean of the coefficients of determination of the linear regressions done to obtain $\widehat{\alpha}$ is $R_{\text{mean}}^2 = 0.9995$.}}
    \end{subfigure}
    
    \begin{subfigure}[t]{0.475\textwidth}
        \centering
        \includegraphics[scale = 0.3]{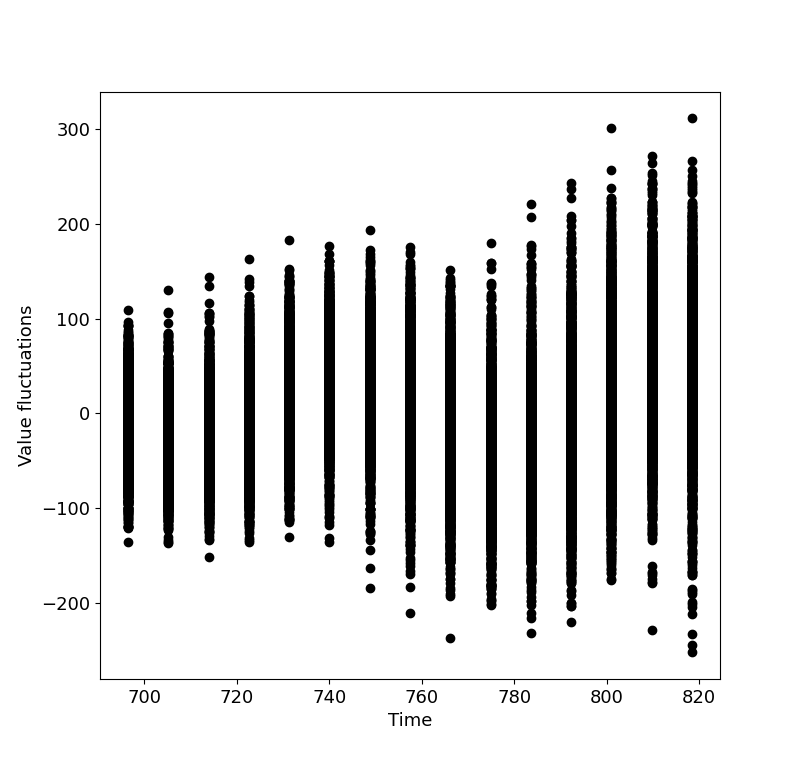}
        \caption{Fluctuations observed on simulations.}
    \end{subfigure}
    \hfill 
    \begin{subfigure}[t]{0.475\textwidth}
        \centering
        \includegraphics[scale = 0.3]{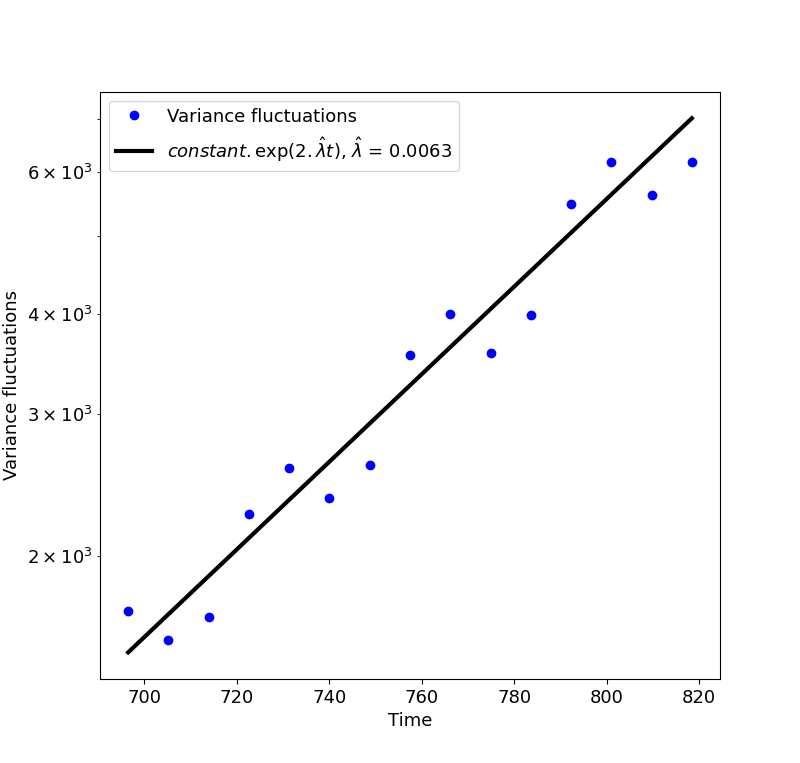}
        \caption{Variance of fluctuations, at the log-scale. \it{The coefficient of determination of the linear regression done to obtain $\widehat{\lambda}$ is $R^2 = 0.9926$.}}
    \end{subfigure}
    \caption{Illustration of the different steps for the inference with 2000 simulations, for the parameters $(k_1, \theta_1) = (70, 1)$.}
    \label{fig:oscillating_illustration_pipeline}
\end{figure}

\begin{figure}[!ht]
    \centering
    \begin{subfigure}[t]{0.485\textwidth}
        \centering
        \includegraphics[scale = 0.3]{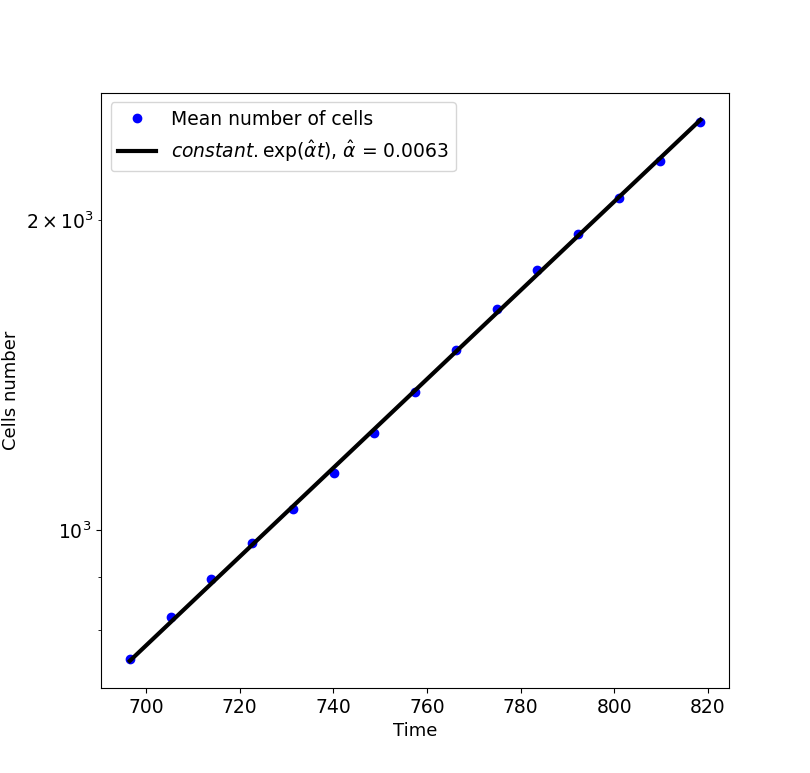}
        \caption{Mean number of cells versus time, $(k_1,\theta_1) = (70,1)$. \it{The mean of the coefficients of determination of the linear regressions done to obtain $\widehat{\alpha}$ is $R_{\text{mean}}^2 = 0.9995$.}}\label{fig:mean_cells_small_amplitude}
    \end{subfigure}
    \hfill
    \begin{subfigure}[t]{0.485\textwidth}
        \centering
        \includegraphics[scale = 0.3]{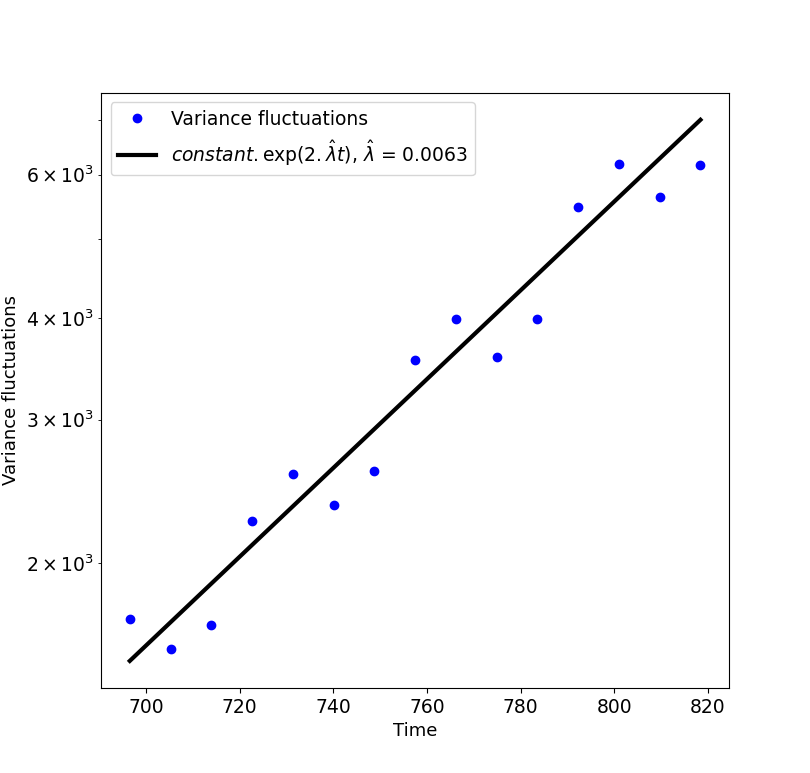}
        \caption{Mean number of cells versus time, $(k_4,\theta_4) = (385.5,2.5)$. \it{The mean of the coefficients of determination of the linear regressions done to obtain $\widehat{\alpha}$ is $R_{\text{mean}}^2 = 0.9920$.}}\label{fig:mean_cells_large_amplitude}
    \end{subfigure}
    
    \begin{subfigure}[t]{0.485\textwidth}
        \centering
        \includegraphics[scale = 0.3]{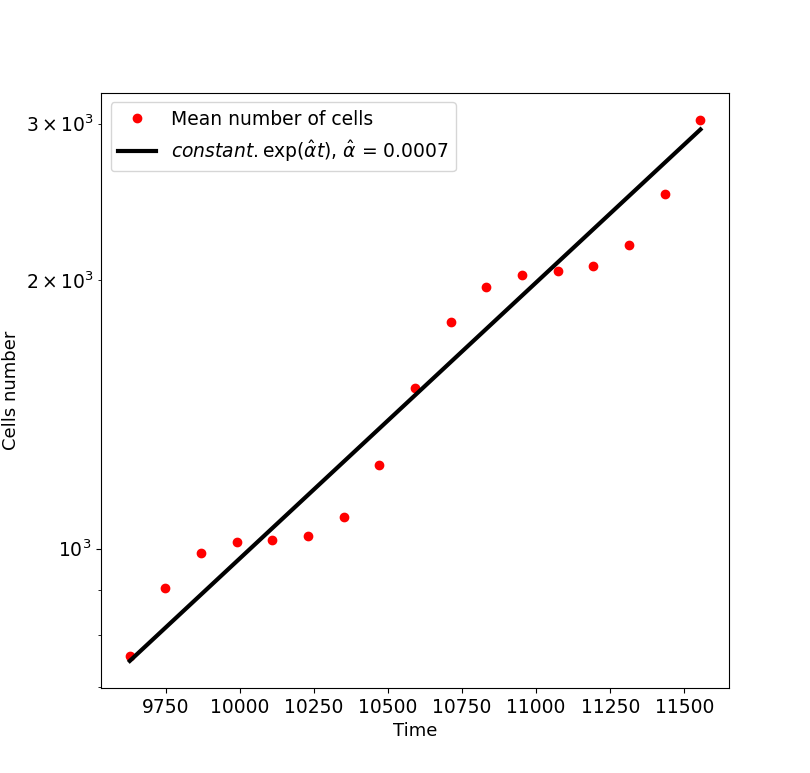}
        \caption{Variance fluctuations versus time, $(k_1,\theta_1) = (70,1)$. \it{The coefficient of determination of the linear regression done to obtain $\widehat{\lambda}$ is $R^2 = 0.9926$.}}
        \label{fig:variance_fluctuations_small_amplitude}
    \end{subfigure}
    \hfill 
    \begin{subfigure}[t]{0.485\textwidth}
        \centering
        \includegraphics[scale = 0.3]{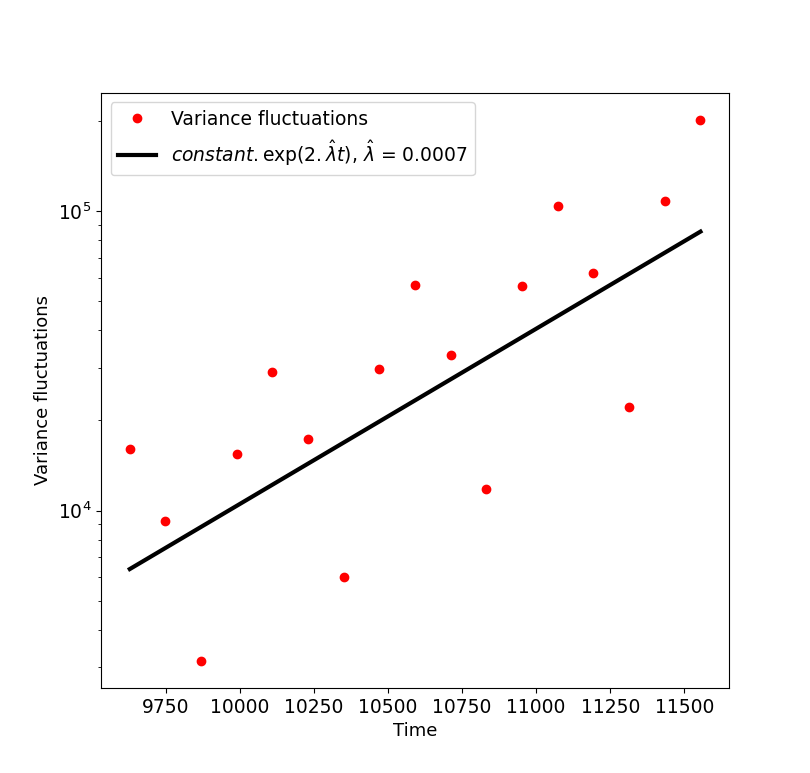}
        \caption{Variance fluctuations versus time, $(k_4,\theta_4) = (385.5,2.5)$. \it{The coefficient of determination of the linear regression done to obtain $\widehat{\lambda}$ is $R^2 = 0.8613$.}}\label{fig:variance_fluctuations_large_amplitude}
    \end{subfigure}
    \caption{Comparison at the log-scale of the mean number of cells and variance of fluctuations, with exponential curves whose coefficients are respectively $\widehat{\alpha}$ and~$2\widehat{\lambda}$, for $(k_1,\theta_1) = (70,1)$ and $(k_4,\theta_4) = (385.5,2.5)$. \it{Mean and variance are computed using empirical estimators with $2000$ simulations.}}
    \label{fig:comparison_linear_regressions}
\end{figure}




\section{Estimation of parameters from data}
\label{desdata}
Our original motivation for this work is the characterization of the variability of time division  using monitoring of population sizes. Let us apply our work to two dataset counting dividing cells. All the quantities given below are given with $3$ significant numbers.

\subsection{First data set} The first dataset we use comes from 
~\cite{coates_antibiotic-induced_2018}. It contains $13$ realizations of the growth of E. Coli bacteria, obtained with videomicroscopy, where the exact number of cells is counted in each frame. Measurements start after $48$ minutes, and are done every $15$ minutes, see Figure~\ref{fig:data_Sylvain_cells_number}. Since the cells are grown in a very large recipient, we assume that they do not compete for resources, which is why they remain in exponential phase until the end of the measurements.


We assume that the lifetime of these bacteria is distributed according to a Gamma distribution. Our aim is to estimate the mean and the coefficient of variation of bacteria lifetime with our method. First, we estimate the Malthusian coefficient and determine in which regime we are. The estimation of $\alpha$ gives us $\widehat{\alpha}_1 = 0.0262\text{ minutes}^{-1}$, see Figure~\ref{fig:data_Sylvain_log_cells_number}. As we have $\log(2)/(2.\widehat{\alpha}_1) = 13.3\text{ minutes}$ and that $\Delta = 15 \text{ minutes}$, we use the time step $\delta_1 = 15\text{ minutes}$ to identify the regime. The result of the estimation of $\lambda$ is \hbox{$\widehat{\lambda}_1=0.0188\text{ minutes}^{-1}$}, see Figure~\ref{fig:data_Sylvain_variance_fluctuations}. Then,  
$$\frac{\left|2\widehat{\lambda}_1 - \widehat{\alpha}_1\right|}{\widehat{\alpha}_1} = 44.1\% >10\%$$
and we have the oscillating regime.
We now estimate lifetime parameters. We use the estimator for the oscillating regime. Here are the parameters we obtain
$$
\begin{aligned}
(\widehat{k}_1,\widehat{\theta}_1) &= \underset{\substack{(k,\theta)\in[1,+\infty[\times\mathbb{R}_+^*  \\ (2^{1/k} - 1)/\theta = \widehat{\alpha}_1}}{\text{argmin}} \left|(2^{1/k}\cos\left(2\pi/k\right)-1)/\theta - \widehat{\lambda}_1\right| = \left(102, 0.260\text{ minutes}\right).
\end{aligned}
$$
The latter corresponds to the following mean and coefficient of variation
$$
\begin{aligned}
\left(\widehat{\mu}_1,\frac{\widehat{\sigma}_1}{\widehat{\mu}_1}\right) =  \left(\widehat{k}_1\widehat{\theta}_1,\frac{1}{\sqrt{\widehat{k}_1}}\right) = \left(26.6\text{ minutes},9.893\%\right).
\end{aligned}
$$
Although the coefficient of variation is a little low, the values estimated seem fairly credible. 
\begin{figure}[!ht]
    \centering
    \begin{subfigure}[t]{0.475\textwidth}
        \centering
        \includegraphics[scale = 0.3]{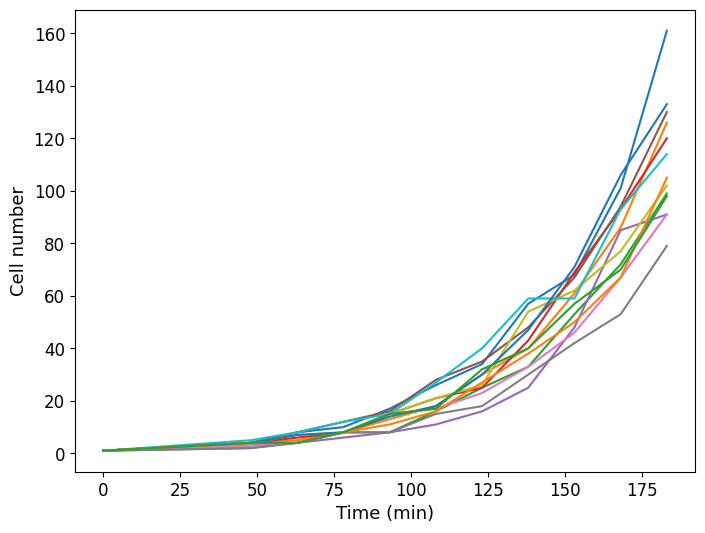}
        \caption{Cell number at each time of measurement.}
        \label{fig:data_Sylvain_cells_number}
    \end{subfigure}
    \hfill
    \begin{subfigure}[t]{0.475\textwidth}
        \centering
        \includegraphics[scale = 0.3]{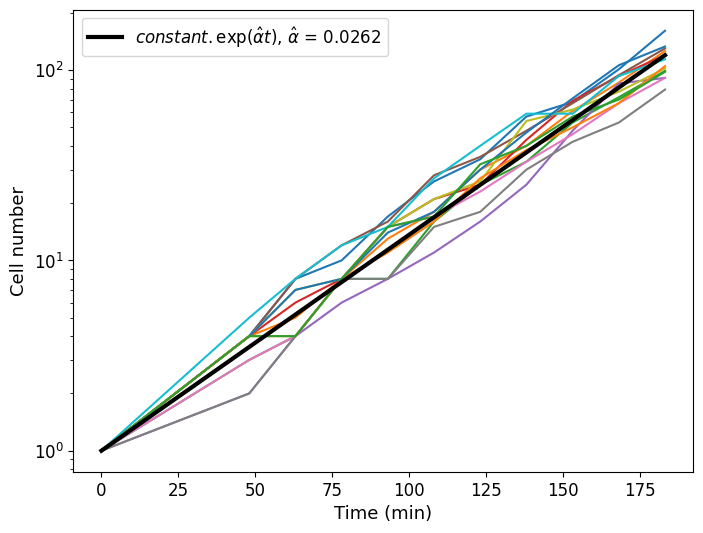}
        \caption{Cell number at each time of measurement at the log-scale. \it{The mean of the coefficients of determination of the linear regressions done to obtain $\widehat{\alpha}_1$ is $R_{\text{mean}}^2 = 0.986$.}}
        \label{fig:data_Sylvain_log_cells_number}
    \end{subfigure}
    
    \begin{subfigure}[t]{0.475\textwidth}
        \centering
        \includegraphics[scale = 0.3]{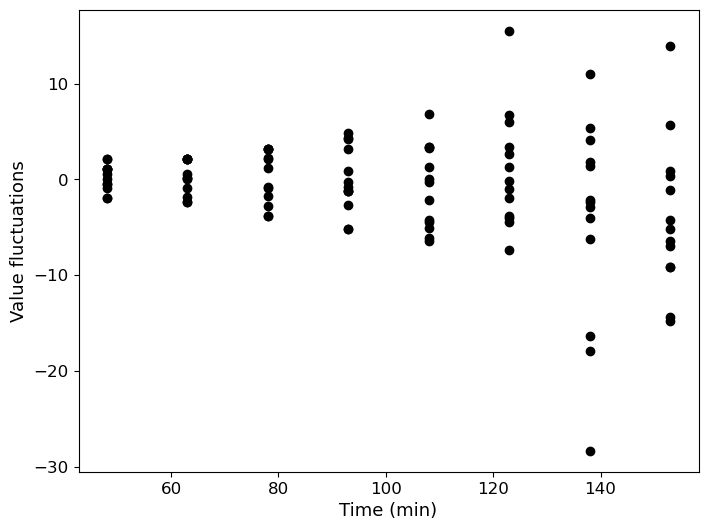}
        \caption{Fluctuations $R_{t,\delta_1}$ at each time of measurement.}
        \label{fig:data_Sylvain_values_fluctuations}
    \end{subfigure}
    \hfill 
    \begin{subfigure}[t]{0.475\textwidth}
        \centering
        \includegraphics[scale = 0.3]{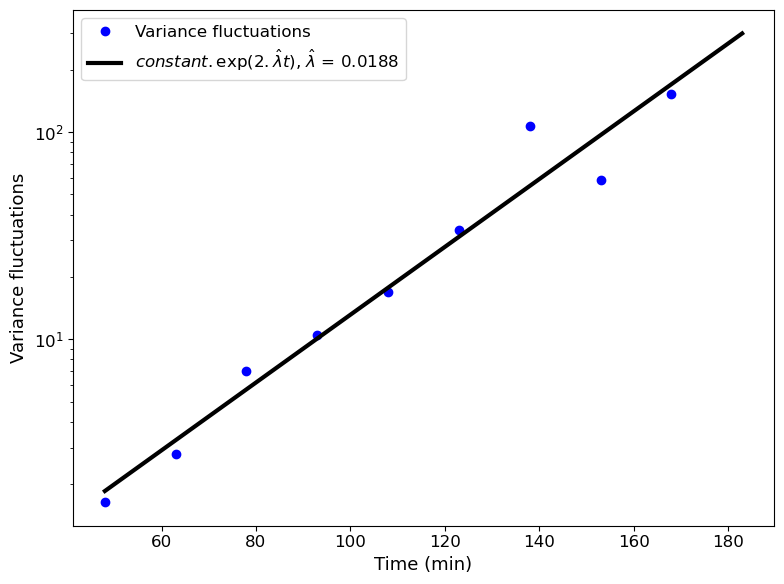}
        \caption{Variance of the fluctuations at each time of measurement, at the log-scale. \it{The coefficient of determination of the linear regression done to obtain $\widehat{\lambda}_1$ is $R^2 = 0.960$.}}    \label{fig:data_Sylvain_variance_fluctuations}
    \end{subfigure}
    \caption{Illustration of the different steps done to estimate the parameters of cells lifetime for the dataset coming from \cite{coates_antibiotic-induced_2018}.}
    \label{fig:data_Sylvain}
\end{figure}
\subsection{Second data set}
The second dataset we use comes from experiments conducted by the authors of the paper. Such data set is the original motivation for our study, see in particular \cite{amselem2016universal,barizien_studying_2019}. It contains $347$ realizations of the growth of bacteria E.Coli, obtained by encapsulating the original sets within anchored microfluidic droplets, see~Figure~\ref{fig:data_Charles_number_cells}. The drops are then placed on the stage of a motorized microscope and scanned at regular time intervals. Image analysis is then used to obtain the number of cells per droplet, as explained in 
~\cite{quellec2023measuring}. Here, the cells are grown in so-called MOPS minimal media, which explains their slow growth compared with typical cells growing in rich LB culture media~\cite{quellec2023measuring}. The measurements are done every $30$ minutes, then $\Delta = 0.5$ hours, and the number of bacteria is represented by an ``intensity of fluorescence" in this dataset. It has been shown that there is a coefficient of proportionality between the intensity of fluorescence and the number of cells \cite{barizien_studying_2019}, although up to our knowledge the value of this coefficient has not yet been well estimated. 

As for the first set, we want to obtain the mean and the coefficient of variation of the times between division of the bacteria thanks to our method. As the relationship between the number of cells and the fluorescence intensity is linear, the eigenvalues remain the same in both settings. Thus, if we detect the oscillating regime, the inference is possible. The first thing we do is to estimate the Malthusian coefficient, and identify in which regime we are. We obtain thanks to linear regression that $\widehat{\alpha}_2 = 0.450 \text{ hours}^{-1}$, see~Figure~\ref{fig:data_Charles_log_number_cells}. As $\log(2)/(2\widehat{\alpha}_2) = 0.770 \text{ hours}$ and that $\Delta = 0.5$ hours, we use $\delta_1 = 1$ hour to determine the regime. 
It is not a problem that $\delta_1$ is not close to $\log(2)/(2\widehat{\alpha}_2)$. The more important is to avoid to be too close to $\log(2)/\widehat{\alpha}_2$, see Section~\ref{subsubsect:step_detection_regime}, which is the case here.
Our estimation of the second eigenvalue is $\widehat{\lambda}_2 =  0.343 \text{ hours}^{-1}$, see Figure~\ref{fig:data_Charles_variance_fluctuations}. We detect again the oscillating regime, because the relative error between $2\widehat{\lambda}_2$ and $\alpha$ is~$52.4\%$, which is strictly greater than $10\%$. Then, the formula used for the first data set now gives
$$
(\widehat{k}_2,\widehat{\theta}_2) = (120.1,\,0.0129 \text{ hours}),
$$
which corresponds to the following mean and coefficient of variation 
$$
\left(\widehat{\mu}_2,\frac{\widehat{\sigma}_2}{\widehat{\mu}_2}\right) =  \left(\widehat{k}_2\widehat{\theta}_2,\frac{1}{\sqrt{\widehat{k}_2}}\right) = \left(1.55 \text{ hours},9.12\%\right).
$$
The value of $\mu_2$ obtained in this case is much higher than the one for the first dataset, in agreement with the slower growth rate observed in Figure~\ref{fig:data_Charles}. This is due to the different nature of the cell culture media. The estimated coefficient of variation appears to be fitting and relevant in this context. One can also see that we obtain a coefficient of variation similar to the one obtained in the previous dataset. This seems rather relevant.

\begin{figure}[!ht]
    \centering
    \begin{subfigure}[t]{0.475\textwidth}
        \centering
        \includegraphics[scale = 0.3]{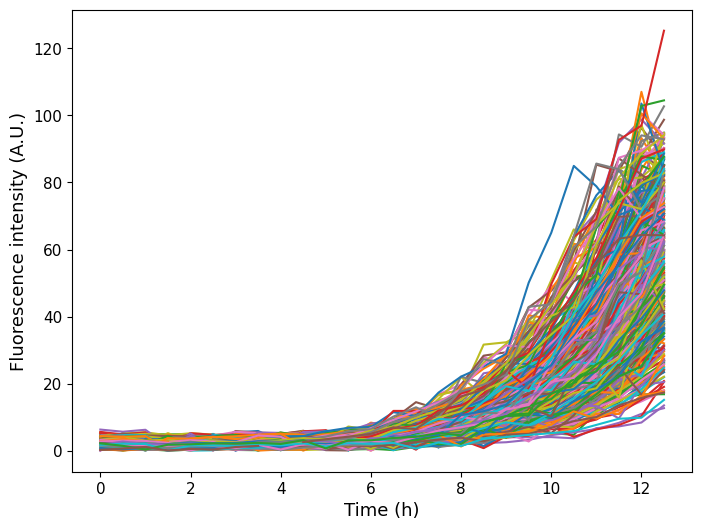}
        \caption{Fluorescence intensity at each time of measurement.}
        \label{fig:data_Charles_number_cells}
    \end{subfigure}
    \hfill
    \begin{subfigure}[t]{0.475\textwidth}
        \centering
        \includegraphics[scale = 0.3]{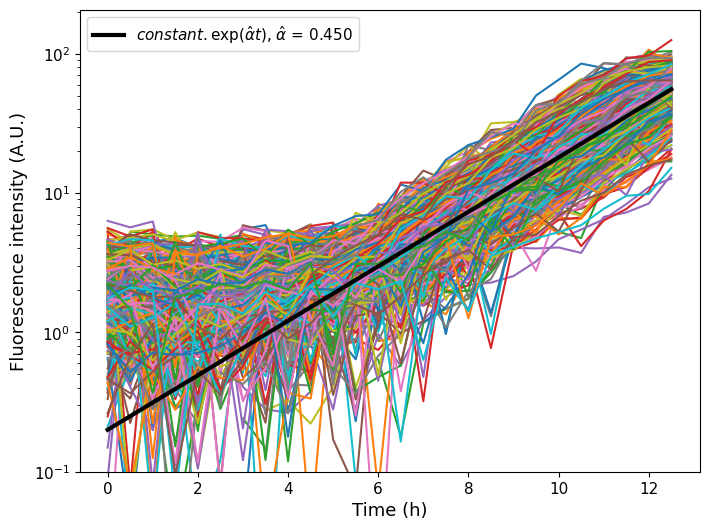}
        \caption{Fluorescence intensity at each time of measurement, at the log-scale. \it{The mean of the coefficients of determination of the linear regressions done to obtain $\widehat{\alpha}_2$ is $R_{\text{mean}}^2 = 0.926$.}}
        \label{fig:data_Charles_log_number_cells}
    \end{subfigure}
    
    \begin{subfigure}[t]{0.475\textwidth}
        \centering
        \includegraphics[scale = 0.3]{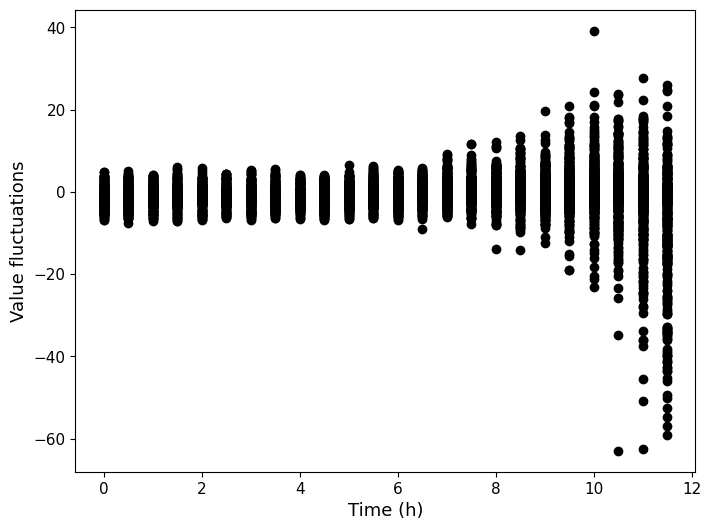}
        \caption{Fluctuations $R_{t,\delta_1}$ at each time of measurement.}
        \label{fig:data_Charles_fluctuations}
    \end{subfigure}
    \hfill 
    \begin{subfigure}[t]{0.475\textwidth}
        \centering
        \includegraphics[scale = 0.3]{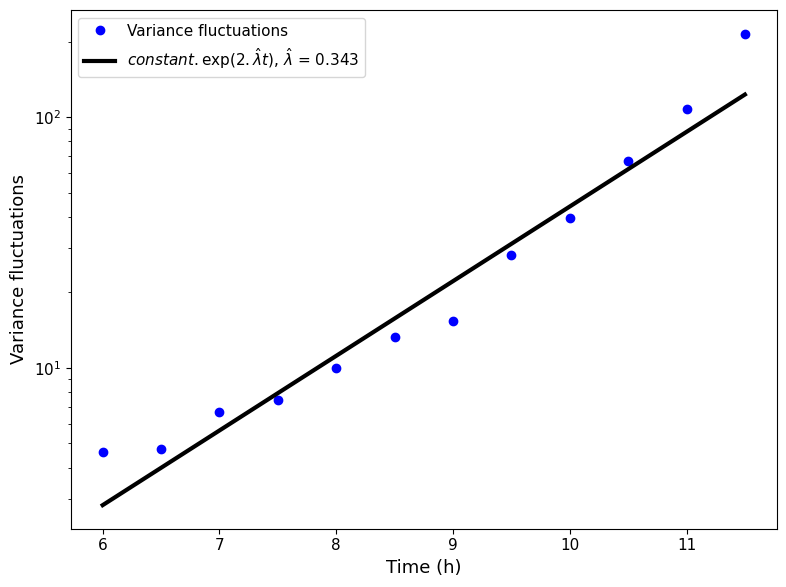}
        \caption{Variance of the fluctuations at each time of measurement, at the log-scale. \it{The coefficient of determination of the linear regression done to obtain $\widehat{\lambda}_2$ is $R^2 = 0.954$.}}
        \label{fig:data_Charles_variance_fluctuations}
    \end{subfigure}
    \caption{Illustration of the different steps done to estimate the parameters of cells lifetime for the dataset coming from our experiments.}
    \label{fig:data_Charles}
\end{figure}

\section{Discussion}
In this work, we show how to obtain the parameters of the law of the lifespan of individuals using   the total population sizes  at given times. We exploit the distribution of fluctuations   from one observation time to the next one by comparing the population size to the value expected from deterministic exponential growth. We model the population by a Bellman-Harris process, which   does not include 
heredity. This  provides a natural and simple statistical framework with independent and identically distributed lifespans.
Despite this simplicity, the estimation of parameters of this distribution is subtle. Indeed, two regimes exist for the asymptotic fluctuations of the process. Determining in which regime  we fall yields a first difficulty. It is worth noting that the critical value between the two regimes correspond to a coefficient variation of around $10\%$. This is realistic for biological problems and for instance for microfluidic data which motivates this work. This motivates a precise mathematical analysis. We propose a procedure  to determine the regime of fluctuations and tune the length of the time between observations.
This procedure  has been confronted to simulations and real data.  For simulations, our results show that the estimation we propose gives an efficient approach. The comparison with biological data also argues for its relevance. Indeed, applying our inference method on two different datasets returns mean and coefficient of variation that are consistent with our biological knowledge.
\\

In many biological contexts (cell growth and cancer emergence, epidemics, invasive species...),   obtaining the mean individual behavior is rather direct from the growth rate of the population size. Our study shows that determining the individual variability  or stochasticity  from measures at the population level is difficult. It puts in light that the choice of the times of observation is crucial. It also pledges for high sampling rate, and measures  at the individual levels rather than  at the population level. This would require us to generate lots of data using high magnification experiments. In contrast, by using the current techniques, we are able to measure the parameters of the division rates with a relatively low sampling time.\\

This work can be seen as a first step for inverse problem and can pave the way for the investigation of  more sophisticated and realistic models. In particular, one may focus on other distribution for the lifespan, even if it is less explicit than the Gamma law. For instance, the distribution of time for cell division is sometimes best described by the~\hbox{$a_0$-shifted} gamma distribution, see \cite{stukalin_age-dependent_2013} and references therein. A more challenging issue would be to take into account heredity. In particular, lots of attention has been paid to the effect of the size and of the variation of size from the previous division (sizer and adder model).  This yields interesting perspectives. In particular, in this direction, robustness study and sensitivity analysis would be relevant. In our pursuit of capturing the variability in lifespan, we may also consider the initial individual differences through a mixed model.

At a more technical level, we expect to complement the results of Section \ref{sect:simus} by more quantitative bounds on the estimation of parameters. In particular, the fact that we first estimate $\alpha$ and then use the value obtained for the estimation of the remaining parameter of the lifespan has been tested and validated by simulations but could be more formally investigated. Besides, we have used a threshold at $10\%$ for the detection of the Gaussian and oscillating regime from the data, based on empirical explorations. More generally, a sensitivity analysis would be interesting  to complement this study and help for potential applications. It will also be valuable to obtain theoretical assurances on the property we have inferred on $\sigma_{\delta}^2$ when $\delta = \log(2)/\alpha$, as the injectivity of this function in the variable~$k$, or the fact that it does not depend on $\alpha$.



\section{Appendix and complements}
\subsection{Approximation of \texorpdfstring{$h^{(\delta)}$}{h}}\label{subsect:approximation_mean_Bellman-Harris}

Function $h^{(\delta)}$ can not be approximated with Monte-Carlo simulations because the error of the empirical estimator of $h^{(\delta)}$ grows like $e^{\alpha t/2}$ when $t$ increases, in view of Theorem \ref{thm:main_result}. In particular, if we denote $\widehat{h}^{(\delta)}$ the estimator obtained with Monte-Carlo simulations, the function
$$
x \mapsto \Var\left((\mathbb{E}_x\left[N_{\delta}\right] - e^{\alpha\delta})1_{[0,\zeta[}(x) + 2 \widehat{h}^{(\delta)}(x - \zeta)1_{[\zeta, +\infty[}(x)\right) e^{-\alpha\,x}
$$
is not integrable. Thus, to approximate $h^{(\delta)}$, we proceed differently. We use the fact that for all $t\geq0$, the quantity $\mathbb{E}[N_t]$ is equal to the following when $k\in\mathbb{N}^*$
\begin{equation}\label{eq:approximation_mean_Bellman-Harris}
\overline{\mathbb{E}[N_t]} = \frac{1}{2k}\sum_{l = -\left\lceil \frac{k}{2} \right\rceil + 1}^{\left\lfloor \frac{k}{2} \right\rfloor}\frac{2^{\frac{1}{k}}\exp\left(\frac{2\pi\,l}{k}i\right)}{2^{\frac{1}{k}}\exp\left(\frac{2\pi\,l}{k}i\right) - 1}\exp\left(\frac{2^{\frac{1}{k}}\exp\left(\frac{2\pi\,l}{k}i\right) - 1}{\theta} t\right).\\
\end{equation}
We also use the fact that $\mathbb{E}[N_t]$ can be approximated by the following when $k\notin \mathbb{N}^*$ and~$t > 0$,
\begin{equation}\label{eq:gaver_stehfest_mean_Bellman-Harris}
\widehat{\mathbb{E}[N_t]} = \frac{\ln(2)}{t}\sum_{l = 1}^{200}\frac{C_lt}{l\ln(2)} \frac{\left(1+l\theta\ln(2)/t\right)^k - 1}{\left(1+l\theta\ln(2)/t\right)^k - 2},
\end{equation}
where for all $l\in\llbracket1,200\rrbracket$
$$
C_l = \frac{(-1)^{l+100}}{100!} \sum_{j = \left\lfloor\frac{l+1}{2}\right\rfloor}^{\min(l,100)} j^{100+1} \binom{100}{j}\binom{2j}{j}\binom{j}{l-j}.
$$
Then, we approximate $h^{(\delta)}$ with $t \mapsto \overline{\mathbb{E}[N_{t+\delta}]} - e^{\alpha\delta}\overline{\mathbb{E}[N_{t}]}$ or $t \mapsto \widehat{\mathbb{E}[N_{t+\delta}]} - e^{\alpha\delta}\widehat{\mathbb{E}[N_{t}]}$, depending on the value of $k$.

In this section, we explain why the expressions given in \eqref{eq:approximation_mean_Bellman-Harris}-\eqref{eq:gaver_stehfest_mean_Bellman-Harris} can be used to approximate the function $t\mapsto \mathbb{E}[N_t]$. When $k\in\mathbb{N}^*$, we show that $\mathbb{E}[N_t] = \overline{\mathbb{E}[N_t]}$ for all~$t\geq0$. When $k\notin \mathbb{N}^*$,  we explain what the function $t\mapsto \widehat{\mathbb{E}[N_t]}$ represents and why this is a good approximation of $t\mapsto \mathbb{E}[N_t]$.

\subsubsection{Approximation when $k\in \mathbb{N}^*$}

For all $t\geq 0$, we denote $m(t) = \mathbb{E}[N_t]$ and $\overline{G}(t) = 1- G(t)$. Our aim is to prove that $m$ is equal to the right-hand side of~\eqref{eq:approximation_mean_Bellman-Harris}, by doing an inverse Laplace transform. Using the integral equation, we have
\begin{equation}\label{eq:integral_equation_mean}
m(t) = \overline{G}(t) + 2\int_0^t  m(t-s)g(s) ds = \overline{G}(t) + 2\int_0^t  m(s)g(t-s) ds .
\end{equation}
Then, taking the Laplace transform of $m$, we obtain that for all $p\in \mathbb{C}$ such that $Re(p) > \alpha$, 
$$
\mathcal{L}m(p) = \mathcal{L}\overline{G}(p) + 2\mathcal{L}g(p)\mathcal{L}m(p),
$$
implying that
$$
\mathcal{L}m(p) = \frac{\mathcal{L}\overline{G}(p)}{1 - 2\mathcal{L}g(p)}.
$$
We already know the value of $\mathcal{L}g(p)$ from \eqref{eq:laplace_transform_gamma}. In addition, with usual computations related to Laplace transforms, see \cite[p.~$256$]{folland_2009},$$\mathcal{L}\overline{G}(p) = \frac{\overline{G}(0) - \mathcal{L}g(p)}{p} = \frac{1 - \mathcal{L}g(p)}{p}.$$ Combining these, we obtain for all $p\in \mathbb{C}$ such that $Re(p) > \alpha$,
\begin{equation}\label{eq:relation_mean_laplace_transform}
\mathcal{L}m(p) = \frac{1}{p} \frac{(1+p\theta)^k - 1}{(1+p\theta)^k - 2}.
\end{equation}
We now derive an expression of $\mathcal{L}m$ that simplifies the inversion of the Laplace transform.  Function $\mathcal{L}(m)$ is a rational fraction since $p\mapsto ((1+p\theta)^k - 1)/p$ is   polynomial. In addition, in view of~\eqref{eq:expression_eigenvalues_gamma}, the complex numbers $\beta_l= 2^{\frac{1}{k}}\exp\left(\frac{2\pi l}{k}i\right)/\theta$, for all $l \in \llbracket-\lceil \frac{k}{2} \rceil+1, \lfloor \frac{k}{2} \rfloor\rrbracket$ are the roots of the polynomial at the denominator. By doing a partial fraction decomposition of~$\mathcal{L}m$, we then obtain that for all $p\in \mathbb{C}$ verifying $Re(p) > \alpha$ 
\begin{equation}\label{eq:laplace_transform_mean}
\mathcal{L}(m)(p) = \sum_{l = -\left\lceil \frac{k}{2} \right\rceil+1}^{\left\lfloor \frac{k}{2} \right\rfloor}\frac{a_l}{\left(p-\beta_l\right)},
\end{equation}
where for all $l\in \llbracket-\lceil \frac{k}{2} \rceil+1, \lfloor \frac{k}{2} \rfloor\rrbracket$, using that $(1+\beta_l\theta)^{k} = 2$, it holds
\begin{equation}\label{eq:coeffs_laplace_transform_mean}
a_l = \lim_{p \rightarrow \beta_l} \left(p-\beta_l\right)\mathcal{L}(m)(p) = \frac{1}{\beta_l} \lim_{p \rightarrow \beta_l} \frac{\left(p-\beta_l\right)}{(1+p\theta)^k - 2} = \frac{1}{\beta_lk\theta(1+\beta_l\theta)^{k-1}} = \frac{1+\beta_l\theta}{2\beta_lk\theta}.
\end{equation}
Let us use these two equations to compute the value of $m$. First, we do an analytic continuation, and use that for any $b\in\mathbb{C}$ the inverse Laplace transform of $p \mapsto 1/(p-b)$ is $t\mapsto e^{bt}$, to invert the Laplace transform in~\eqref{eq:laplace_transform_mean}. Then, for all $l\in \llbracket-\lceil \frac{k}{2} \rceil+1, \lfloor \frac{k}{2} \rfloor\rrbracket$, we plug the definition of $\beta_l$ in~\eqref{eq:coeffs_laplace_transform_mean}, to have the value of the coefficient $a_l$. We obtain that for all $t\geq 0$
$$
m(t) = \frac{1}{2k}\sum_{l = -\left\lceil \frac{k}{2} \right\rceil+1}^{\left\lfloor \frac{k}{2} \right\rfloor}\frac{2^{\frac{1}{k}}\exp\left(\frac{2\pi\,l}{k}i\right)}{2^{\frac{1}{k}}\exp\left(\frac{2\pi\,l}{k}i\right) - 1}\exp\left(\frac{2^{\frac{1}{k}}\exp\left(\frac{2\pi\,l}{k}i\right) - 1}{\theta} t\right).
$$
Thus, $m(t)$ is exactly the right-hand side of~\eqref{eq:approximation_mean_Bellman-Harris}.
\subsubsection{Approximation when $k\notin \mathbb{N}^*$} 
When $k\notin \mathbb{N}^*$ (and $k\geq 1$), we can not do the partial fraction decomposition as done in the previous case, and therefore can not obtain an explicit value for $t \mapsto \mathbb{E}[N_t]$. Nevertheless, \eqref{eq:relation_mean_laplace_transform} is still true. We can thus use numerical methods for inverting Laplace transforms to approximate $t \mapsto \mathbb{E}[N_t]$. The expression given in~\eqref{eq:gaver_stehfest_mean_Bellman-Harris} comes from one of these methods. Precisely, it comes from the Gaver-Stehfest algorithm~\cite{gaver_observing_1966,stehfest_algorithm_1970,stehfest_remark_1970}. This algorithm allows to approximate a smooth function $f : \mathbb{R}_+^* \rightarrow \mathbb{R}$ when its Laplace transform $\mathcal{L}(f)$ is known. It has the advantage of being fast, easy to implement, and relatively accurate when the exact value of the Laplace transform is known. Its analytical expression is given for all $t\geq0$ as
$$
f_n(t) = \frac{\ln(2)}{t} \sum_{l=1}^{n} C_l(n) \mathcal{L}(f)\left(\frac{l \ln(2)}{t}\right),
$$
where $n\in\mathbb{N}^*$ is an even parameter, and for all $l\in\llbracket1,n\rrbracket$
$$
a_l(n) = \frac{(-1)^{l+n/2}}{(n/2)!} \sum_{j = \left\lfloor\frac{l+1}{2}\right\rfloor}^{\min(l,n/2)} j^{n/2+1} \binom{n/2}{j}\binom{2j}{j}\binom{j}{l-j}.
$$
In view of~\eqref{eq:gaver_stehfest_mean_Bellman-Harris} and~\eqref{eq:relation_mean_laplace_transform}, one can easily see that in fact,~$t\mapsto \widehat{\mathbb{E}\left[N_t\right]}$ corresponds to the Gaver-Stehfest algorithm with parameter $n = 200$.


In~\cite{kuznetsov_convergence_2013}, it has been proved that if $f$ is continuous at a point $t_0 >0$ and has bounded variation in one of its neighbourhood, then it holds $\lim_{n\rightarrow+\infty}f_n(t_0) = f(t_0)$. This result has been refined in~\cite{kuznetsov_rate_2020} in which the authors have obtained the rate of convergence under additional assumptions. Thus, the parameter $n$ determines the accuracy of the approximation, and the larger it is, the better the approximation of $f$ is. In our case, we have chosen $n = 200$ to define $t\mapsto \widehat{\mathbb{E}\left[N_t\right]}$, which is a good trade-off between having a large parameter and having too long computations. In addition, the function \( t \mapsto \mathbb{E}\left[N_t\right] \) is continuous on~\( \mathbb{R}_+^* \) and has bounded variation on compact sets. This is because, by~\eqref{eq:integral_equation_mean}, $t \mapsto \mathbb{E}\left[N_t\right]$ is the fixed point of the operator $L(h) = \left(1 - G\right) + 2g * h$, which maps locally bounded functions $h: \mathbb{R}_+^* \rightarrow \mathbb{R}$ to continuous functions with bounded variation on compact sets, as~$g$ and~$G$ have these two properties. Then, in view of the theoretical result obtained in~\hbox{\cite{kuznetsov_convergence_2013}}, we have that~\hbox{$t\mapsto \widehat{\mathbb{E}\left[N_t\right]}$} is a good approximation of~$t\mapsto\mathbb{E}\left[N_t\right]$.

We emphasize that as the Gaver-Stehfest algorithm is computationally expensive, when we use \hbox{$t\mapsto \widehat{\mathbb{E}\left[N_t\right]}$} to approximate $t\mapsto\mathbb{E}\left[N_t\right]$ on an interval $(0,M)$, where $M>0$, we do not compute its values for every~$t\in(0,M)$. Instead, we compute its values for a discretization of the interval $\left(0,M\right)$ with~$1000$ points, and use interpolation techniques to obtain an approximation on the full interval. It is sufficient to be able to approximate $t\mapsto\mathbb{E}\left[N_t\right]$ on every interval $(0,M)$, where $M>0$, instead of the full interval $(0,+\infty)$. This is because this approximation is used to compute the integral of the function 
$$
x\in(0,+\infty) \mapsto \Var\left((\mathbb{E}_x\left[N_{\delta}\right] - e^{\alpha\delta})1_{[0,\zeta[}(x) + 2 h^{(\delta)}(x - \zeta)1_{[\zeta, +\infty[}(x)\right) e^{-\alpha\,x},
$$
where~$h^{(\delta)}(x) = \mathbb{E}\left[N_{x+\delta}\right]- e^{\alpha\delta}\mathbb{E}\left[N_x\right]$ for all $x\geq0$, and that the method used to compute this integral (rectangle method) only requires to have the values of the above function on a large subset of $(0,+\infty)$.

\subsection{Variance  of \texorpdfstring{$N_t$}{N\_t} for Gamma law and sensitivity}
\label{subsect:variancegamma_sensitivity}\
In view of \cite[$p.152-153$]{athreya_branching_1972}, there exist $n_1 >0$, $n_2 >0$ such that

$$
\begin{aligned}
\mathbb{E}[N_t] \underset{t\longrightarrow+\infty}{\sim} n_1e^{\alpha t}, \text{ and }\Var(N_t) \underset{t\longrightarrow+\infty}{\sim} n_2e^{2\alpha t},
\end{aligned}
$$
and such that 
$$
\frac{n_2}{(n_1)^2} = \frac{4\mathcal{L}(g)(2\alpha) - 1}{1- 2\mathcal{L}(g)(2\alpha)}.
$$
Using the expression of $\mathcal{L}(g)$ given in \eqref{eq:laplace_transform_gamma} to develop this ratio, and then using the fact that $k = \left(\mu/\sigma\right)^2$, we obtain \eqref{eq:ratio_var_esp}, where for all $x\geq 0$

\begin{equation}\label{eq:q_function}
q(x) = \frac{4\left(2^{x^2+1} - 1\right)^{-\frac{1}{x^2}} - 1}{1 - 2\left(2^{x^2+1} - 1\right)^{-\frac{1}{x^2}}}.
\end{equation}

Let $\alpha = 1$. We show now that $\phi(\sigma/\mu) = \sigma_{\delta}^{2}\left(1/(\sigma/\mu)^2,\alpha\right)$, with $\delta = \log(2)/\alpha$, is more sensible to the variation of $\sigma/\mu$ than the quantity $q(\sigma/\mu)$ for $\sigma/\mu \leq 0.65$. In Figure~\ref{fig:explication_sensibility_variance_coeffvar}, we first plot $\phi(\sigma/\mu,\alpha)$ versus $\sigma/\mu$, using the approximator $\overline{\sigma}_{\delta}^2$ of $\sigma_{\delta}^2$. We observe that the curve is almost linear, with a slope of $1.303$ (this is obviously not the case, but we will use this approximation). We have compared this value with the derivative of $q$ in Figure \ref{fig:explication_sensibility_comparison_derivative}, and we see that the slope is always larger than the derivative of $q$ for $\sigma/\mu \in[1/(57.2)^{1/2},0.65] \simeq  [0.1322,0.65]$.

\begin{figure}[!ht]
    \centering
    \begin{subfigure}[t]{0.475\textwidth}
        \centering
        \includegraphics[scale = 0.31]{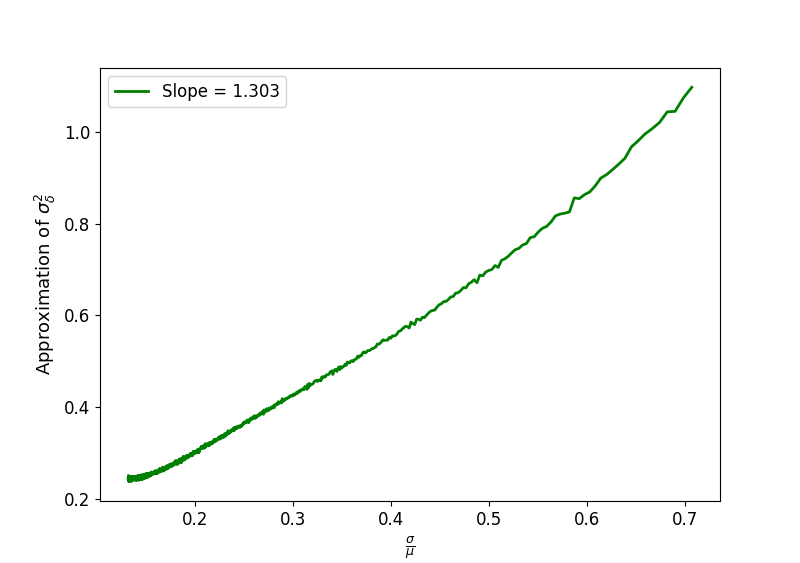}
        \caption{$\phi(\sigma/\mu)$ versus $\sigma/\mu$ for $\sigma/\mu \in[1/(57.2)^{1/2},1/2^{1/2}]$.}\it{The coefficient of determination of the linear regression of $\phi$ versus the coefficient of variation is $R^2 = 0.9885$.}
        \label{fig:explication_sensibility_variance_coeffvar}
    \end{subfigure}
    \hfill
    \begin{subfigure}[t]{0.475\textwidth}
        \centering
        \includegraphics[scale = 0.31]{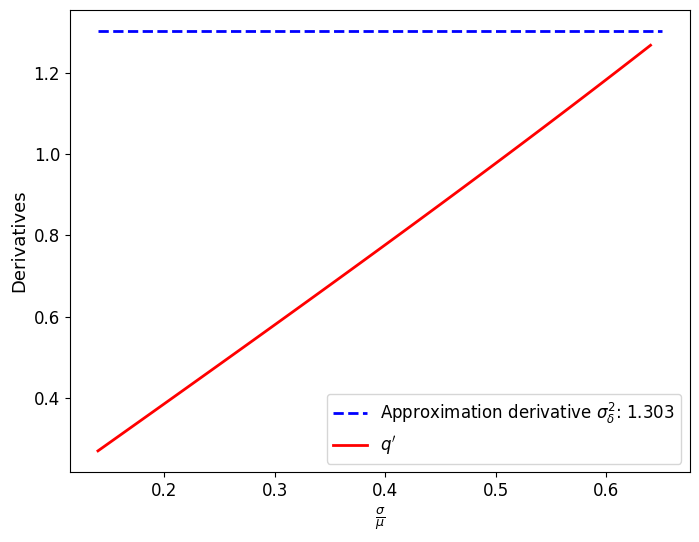}
        \caption{$q'(\sigma/\mu)$ versus $\sigma/\mu$ for $\sigma/\mu \in[1/(57.2)^{1/2},0.65]$ (red). Comparison with the slope of $\phi(\sigma/\mu,\theta)$ (blue).}
        \label{fig:explication_sensibility_comparison_derivative}
    \end{subfigure}
    \caption{Illustration that $\phi(\sigma/\mu)$ is more sensible to the variation of $\sigma/\mu$ than $q(\sigma/\mu)$. \it{$\sigma_{\delta}^2$ has been approximated by $\overline{\sigma}_{\delta}^2$ to plot these curves, using the grid of parameters $\mathbb{G}_{1/20}$ to compute it}}
    \label{fig:explication_sensibility}
\end{figure}


Then, in view of the mean value inequality and using the approximation that $\phi$ is linear, we can conclude that for all $(a,b)\in[1/(57.2)^{1/2},0.65]$ 
$$
\frac{|q(b) - q(a)|}{|b-a|} \leq \max_{x\in[a,b]} |q'(x)| \leq 1.303 \approx \frac{\left|\phi(b,\theta)-\phi(a,\theta)\right|}{|b-a|}.
$$
Thus, $\phi(\sigma/\mu)^{2}(\sigma/\mu,\theta)$ is more sensible to the variation of $\sigma/\mu$ than the quantity $q(\sigma/\mu)$, when $\sigma/\mu\in [1/(57.2)^{1/2},0.65]$. This interval contains most of the biologically-relevant coefficient of variations in the Gaussian regime. Then, $\sigma_{\delta}^2$ seems more relevant to use than~$q$ to obtain information on the variability of division times. 

For coefficient of variations greater than $0.65$, the coefficient of variation seems too large compared to what is biologically true. If we ignore this, even if $q$ is more sensible, there is still difficulties to use $q$ in practice, due to the fact that there are a lot of source of randomness coming from the early stages of the experiment~\cite{stukalin_age-dependent_2013}. Our method of estimation do not have these problems, that is why we think it is still more relevant to use it rather than $q$.


\subsection{Extension of the asymptotic fluctuations to Bellman-Harris processes}\label{subsect:extension_other_bellman_harris} 
Theorem \ref{thm:main_result} can be extended to Bellman-Harris processes that satisfy the following assumptions:
\begin{enumerate}[(A)]
    \item The reproduction law has a finite third  and fourth moment, and  the probability to give no offspring is $0$. In addition, the first moment of this law, that we denote $m_1$, is strictly greater than $1$. 
    \item Lifetime distribution is given by a density $g$, 
    that is bounded continuous on $\mathbb{R}_+$, and continuously differentiable on $(0,+\infty)$. In addition, $g$ has a finite first moment.
    \item The following holds
    $$
    \sup_{s\geq0} \left|\frac{g(s)}{1 - G(s)}\right| < +\infty.
    $$
    \item For all  $\delta >0$, there exists $C > 0$, $l > -1$, such that for all $a\geq0$, $x\in(0,\delta]$ (we denote $G$ the cumulative distribution associated to $g$),
    $$
    \left|\frac{\frac{\partial\,g(a+x)}{\partial\,a}(a,x)}{1 - G(a)}\right| \leq C\left(1+x^{l}\right).
    $$
    \item For all $\lambda \in\mathbb{C}$ such that $\mathcal{L}(g)(\lambda) = 1/m_1$, $\left(\mathcal{L}\right)'(g)(\lambda) \neq 0$.
\end{enumerate}

Indeed, conditions on the moments of the reproduction law, given by Assumption $(A)$, imply Lemma~\ref{lemma:preliminaries_asymptotic_Xt}~$(i)$ . Adding the boundedness and the continuity of $g$, given by $(B)$, allow to obtain Lemma~\ref{lemma:preliminaries_asymptotic_Xt}~$ii)-iii)$. Then, we get Proposition \ref{prop:asymptotic_Xt} from Lemma \ref{lemma:preliminaries_asymptotic_Xt}.

Now, using the inequalities presented in $(C)$ and $(D)$, we obtain an analog to Lemma~\ref{lemma:preliminaries_asymptotic_Yt}: $h_1$ is continuously differentiable, and there exists $C_{h_1} >0$ such that $h_1(a) \leq C_{h_1}$ for all~$a\in\mathbb{R}_+$. Then, we apply \cite[Theorem $2.9$]{iksanov_asymptotic_2024}, for which we need to have that $m_1 > 1$ and that $g$ has a finite first moment, to obtain Propositions \ref{prop:application_Iksanov} and \ref{prop:asymptotic_Yt}. We finally conclude the proof, by doing similar steps as those presented in the proof of Theorem~\ref{thm:main_result}.

We require to have a probability to give no offspring of $0$, to be sure that we have $W > 0$ a.s., and then that we can divide $R_{t,\delta}$ by $W$. This assumption can be relaxed by considering events where the population survives. Assumption $(E)$ can also be relaxed, but the analytic expression presented in Theorem \ref{thm:main_result} $ii)-iii)$ must be adapted, adding polynomial terms. We refer to \cite{iksanov_asymptotic_2024} for that.

As for the Gamma distribution, two different regimes exist for the asymptotic fluctuations when $(A-E)$ are verified: Gaussian or oscillating. Again, this depends on the spectral gap (i.e. the gap between the first and second eigenvalue). More precisely, denoting $\alpha$ the positive real number verifying~$\mathcal{L}(g)(\alpha) = \frac{1}{m_1}$ (unique by the intermediate value theorem), if the set
\begin{equation}\label{eq:set_spectral_gap_general}
\left\{\rho\in\mathbb{C}\backslash\{\alpha\}\,:\,\text{Re}(\rho) \geq \frac{\alpha}{2} ,\,\mathcal{L}(g)(\rho) = \frac{1}{m_1}\right\},
\end{equation}
is empty, then we are in the Gaussian regime. If it is equal to~$\left\{\frac{\alpha}{2}\right\}$, then we are in the critical regime. Otherwise, we are in the oscillating regime. However, in comparison with the Gamma distribution, the Laplace transform is, in most of the case, not explicit in function of the coefficient of variation of the law. It it thus in general not possible to obtain a relation between the eigenvalues of the process and the coefficient of variation of the lifetimes as the one we have obtained in~\eqref{eq:expression_eigenvalues_gamma}. The eigenvalues can however still be approximated with numerical methods (Newton's method, bisection method etc...).

For unimodal distributions, we strongly believe that the variability of the lifespan has still a significant influence on the value of the spectral gap. Indeed, the lower the variability is, the more the cells will divide “at the same time”, which will generate oscillations in the cell number over time, see~Figures~\ref{fig:dynamics_oscillatory} and~\ref{fig:mean_evolution}. For other types of distribution, we expect that the factors implying that the set presented in~\eqref{eq:set_spectral_gap_general} is non-empty are more complex. However, the motivations of this paper concern the estimation of unimodal lifetime distributions.
\subsection{The exponential case}
The case of exponential lifetimes seems not relevant for modeling many biological phenomenons such as the time between two divisions. In that case, the memory less property allows to simplify the analysis. The behavior of the fluctuations 
$$R_t^{\delta}=N_{t+\delta} -e^{\delta  \alpha} \, N_t,$$
where~$t\geq0$, $\delta>0$, is simply obtained by a classical central limit theorem. Indeed,
\begin{align*}
R_t^{\delta}=\sum_{i=1}^{N_t} \left(N_{t,\delta}^i-e^{\delta  \alpha}\right),
\end{align*}
and $(N_{t,\delta}^i-e^{\delta  \alpha} : 1\leq i \leq N_t)$ are i.i.d. variables independent of $N_t$. Then, if we denote $\nu >0$ the parameter of the exponential law, as for all $i \in 1 \leq i \leq N_t$, $\text{Var}\left(N_{t,\delta}^i\right) = e^{2\nu\delta} - e^{\nu \delta}$, we have 
$$
R_{t}^{\delta} \overset{\mathcal{L}}{\underset{t\longrightarrow+\infty}{\Longrightarrow}} \mathcal{N}\left(0,e^{2\nu\delta} - e^{\nu\delta}\right).
$$
In particular,  there is one single regime, the Gaussian one, with a simple interpretation of the limiting variance. 

As an exponential law is a gamma law with parameter $k = 1$, this result can also be obtained with Theorem \ref{thm:main_result}. 

$\newline$
$\newline$

\noindent {\bf Acknowledgement.} We warmly thank Marc Hoffmann for stimulating discussions on this topic and this work. We also thank the anonymous referee for his/her insightful comments that help us to improve the overall quality of this manuscript. This work  was partially funded by the Chair “Mod\'elisation Math\'ematique et Biodiversit\'e" of VEOLIA-Ecole Polytechnique-MNHN-F.X., by the European Union (ERC, SINGER, 101054787), and by the Fondation Mathématique Jacques Hadamard. Views and opinions expressed are however those of the author(s) only and do not necessarily reflect those of the European Union or the European Research Council. Neither the European Union nor the granting authority can be held responsible for them. 
\medskip

\noindent {\bf Authors' contributions.} Jules Olayé has conducted this work for the mathematical part, the simulations and the estimations from data. Vincent Bansaye and Charles Baroud  have supervised this work. Antoine Barizien  and Vincent Bansaye have made the first investigations. Jules Olayé and Hala Bouzidi have realized a project during their Master~2, which have given the first theoretical and numerical results of this paper. Andrey Aristov and  Salomé Gutiérrez Ramos have realized the microfuidics experiments. Jules Olayé and Vincent Bansaye have written the paper based on  all these contributions.
\medskip

\noindent {\bf Data availability statement.} All data supporting the findings of this study are available in the Supplementary Material.

\emergencystretch=3em
\printbibliography
\end{document}